\numberwithin{equation}{section}
\theoremstyle{plain}
\newtheorem{theorem}{Theorem}[section]
\newtheorem{lemma}[theorem]{Lemma}
\newtheorem{proposition}{Proposition}
\theoremstyle{remark}
\newtheorem{fact}{Fact}
\newtheorem{assumption}{Assumption}
\newtheorem{condition}{Condition}
\newtheorem{remark}{Remark}
\def\l@paragraph{\@tocline{4}{0pt}{1pc}{7pc}{}}
\def\l@subparagraph{\@tocline{5}{0pt}{1pc}{7pc}{}}
\begin{document}
\clearpage  

\pagestyle{plain}

\pagenumbering{arabic}

\begin{frontmatter}
\title{Revisiting the two-sample location shift model with a log-concavity assumption}
\runtitle{Two-sample location shift model}

\begin{aug}
\author[A]{\inits{F.}\fnms{Riddhiman}~\snm{Saha}
}
\author[B]{\inits{S.}\fnms{Priyam}~\snm{Das}
\orcid{https://orcid.org/0000-0003-2384-0486}}
\author[C]{\inits{T.}\fnms{Nilanjana}~\snm{Laha} \thanks{Corresponding author, email: \textcolor{blue}{nlaha@tamu.edu}.}
}
\address[A]{Department of Biostatistics, Harvard. T. H. Chan School of Public Health, Boston, MA, USA}

\address[B]{Department of Biostatistics,
Virginia Commonwealth University, Richmond, VA, USA}

\address[C]{Department of Statistics,
Texas A\&{M} University, College Station, TX, USA}

\end{aug}

\begin{abstract}
 In this paper, we consider the two-sample location shift model, a classic semiparametric model introduced by \cite{stein}. This model is known for its adaptive nature, enabling nonparametric estimation with full parametric efficiency. Existing nonparametric estimators of the location shift often depend on external tuning parameters, which restricts their practical applicability \citep{van2021stein}. We demonstrate that introducing an additional assumption of log-concavity on the underlying density can alleviate the need for tuning parameters. We propose a one step estimator for location shift estimation, utilizing log-concave density estimation techniques to facilitate tuning-free estimation of the efficient influence function. While we employ a truncated version of the one step estimator for theoretical adaptivity, our simulations indicate that the one step estimators perform best with zero truncation, eliminating the need for tuning during practical implementation.
\end{abstract}

\begin{keyword}
\kwd{log-concave}
\kwd{shape constraint}
\kwd{semiparametric}
\kwd{one step estimator}
\kwd{influence function}
\end{keyword}

\end{frontmatter}


\section{Introduction}
\label{sec:intro}
This paper focuses on the two-sample semiparametric location model, a widely recognized semiparametric model introduced by \cite{stein}. Suppose $X$ and $Y$ are two independent random variables. The two-sample semiparametric location can be described as follows:
\begin{equation}
\label{eq: main model}
    X=\Z_1+\mu,\quad Y=\Z_2+\mu+\Delta,
\end{equation}
where $\mu$ and $\Delta$ are real numbers and $\Z_1$ and $\Z_2$ are mean-zero independent random variables with density $g$ and corresponding distribution function $G$.  Let $\mP_0$  denote the class of  all densities  on the real line $\RR$. We assume that $g$ belongs to the class
\begin{align}
    \label{def: mP 0}
    \mP_0=\bigg\{g\in\mP\ :\ \int xg(x)dx=0,\ \mathcal{I}_g<\infty\bigg\}.
\end{align}
Here $\mathcal I_g$ is the Fisher information for location, which is finite if and only if $g$ is an absolutely continuous density satisfying
\begin{equation}
\label{eq: finitieness of FI}
    \edint\lb\dfrac{g'(x)}{g(x)}\rb^2 g(x)dx<\infty,
\end{equation}
where $g'$ is a weak derivative of $g$   \citep[cf. Theorem $3$ of][]{huber}. In this case $\mathcal{I}_g$ takes the form
\begin{equation*}
\mathcal I_g=\edint\lb\dfrac{g'(x)}{g(x)}\rb^2 g(x)dx.
\end{equation*} 
The constraint $\int xg(x)dx=0$  ensures the identifiability of $\mu$ and $g$ in model \eqref{eq: main model}. Suppose we  observe $m$ and $n$ many i.i.d. copies of $X$ and $Y$, respectively.  The goal is to estimate $\Delta$ in  the presence of the nuisance parameters $\mu$ and $g$.


One specialty of the two-sample location shift model is that it is an adaptive model in the sense of \cite{jonsemi}. To elaborate, even when  $g$ is unknown, it is possible to estimate $\Delta$  with the same asymptotic variance as  the scenario when $g$ is known. 
Thus, it is possible to obtain asymptotically efficient estimators of $\Delta$ that ``adapt" to the unknown infinite dimensional parameter $g$ \footnote{ Here and afterwards, an asymptotically efficient estimator is to be understood in the sense of \cite{vdv}, page 367. In particular, it attains the lowest possible asymptotic variance among the regular estimators; see Section \ref{sec: background} for more details.}. Ever since \cite{stein} introduced the idea of adaptation in this model,  it has spurred a series of subsequent investigations \citep{bhattacharya1967efficient,beran,eden}.  The location-scale version of the model has  been explored by \cite{park,hsieh1995empirical,weiss1970}, where, in addition to the location shift $\Delta$, $Y$ also undergoes a scale shift.

There remain two mainstream approaches for estimating $\Delta$ in the semiparametric model \eqref{eq: main model}. The first approach is based on  rank-based estimators such as the Hodges-Lehmann rank estimator or \citeauthor{kraft1970}'s rank estimator \citep{beran,eden,weiss1970,bhattacharya1967efficient}. The rank-based approach prevailed in early research on this problem until the advent of semiparametric efficiency theory. \citep{van2021stein}. The second approach, which is based on one step estimators, utilizes modern semiparametric efficiency theory \citep{begunhall,bickel1982,park}. 



%

Although the aforementioned approaches offer an adaptive estimator $\widehat\Delta$ of $\Delta$ in the model \eqref{eq: main model}, they share a common drawback: a heavy reliance on external tuning parameters \citep{van2021stein, hogg1974, potgieter2012}. The primary source of these external tuning parameters is the estimation procedure for $g$ and its functionals. Specifically, both the one step estimators and the rank-based estimators depend on estimating the score function $-g'/g$. The existing approaches using the one step estimators and the rank-based estimators typically involve smoothing parameters for estimating $g$ or its functionals. However, the estimation of $\Delta$ is reported to be sensitive to such smoothing parameters \citep{park, potgieter2012}. Some of the aforementioned approaches require additional tuning parameters for estimating $g$ or its functionals. For instance, \cite{beran}'s approach relies on a Fourier basis expansion to estimate the scores $-g'/g$, necessitating a tuning parameter for choosing the number of Fourier basis functions. The score-estimation approach in \cite{eden}, which requires grid-based discretization of functions, introduces tuning parameters for selecting the grid points.

 Moreover, the estimation of $g$ or its functionals is not the sole source of tuning parameters in the above line of work. Some of the  aforementioned approaches  involve   
strategies such as sample splitting \citep{eden} and truncation \citep{park} to improve the estimation performance. These strategies also introduce additional tuning parameters.
In an attempt to potentially alleviate the burden of tuning parameters, \cite{potgieter2012} and \cite{hsieh1995empirical} proposed an alternative approach based on \citeauthor{parzen1979nonparametric}'s (\citeyear{parzen1979nonparametric}, Section 10) formulation of the two-sample location-scale problem. This formulation transforms the problem into a generalized linear model using sample quantiles, eliminating the need to estimate the score $-g'/ g$. However, this approach does not completely eliminate the tuning parameter issue, as it still requires estimating $g$ for constructing confidence intervals, introducing tuning parameters. Additionally, the discretization of the quantile function introduces further tuning parameters.

 
The  existing literature lacks a solid methodology for optimally choosing these tuning parameters \citep{van2021stein}.  In their 2021 memoir of Stein's paper \citep{stein}, \citeauthor{van2021stein} remarks that the practical implementation of such semiparametric estimators remains ``as tricky as in the 1950s" due to the intricacies associated with these tuning parameters. Among the previously mentioned works, only  \cite{potgieter2012} and \cite{park}  discuss data-dependent procedures for tuning parameter selection. \cite{potgieter2012} requires tuning parameters for estimating $g$ and choosing grid points to discretize a quantile function. As stated in their paper, while their tuning procedure prioritizes simplicity and computational feasibility, it does not target the optimal tuning parameters and it is likely to incur a loss of efficiency.

On the other hand, \cite{park} aims to identify the optimal smoothing parameter for estimating $g$ by minimizing the estimated Mean Squared Error (MSE) of $\widehat\Delta$ through a grid search. \cite{park} uses bootstrapping to estimate the MSE of $\widehat\Delta$ for each value of the smoothing parameter. Currently, there are no consistency guarantees for this bootstrap estimator.  
However, in the context of nonparametric regression and density estimation, it has been established that  bootstrap estimators  of the MSE of smoothed estimators can have non-negligible bias \citep{hall1990using, hall2013simple, hardle1988bootstrapping}. Furthermore, there is no guarantee that tuning parameters minimizing the MSE would be optimal in terms of maximizing the asymptotic efficiency. Our numerical experiments in Section \ref{sec: simulations} reveal that the MSE-based procedure may result in suboptimal choices of smoothing parameters. In fact, in our simulation study, the asymptotic efficiency and the MSE of the resulting estimators were comparable to our baseline estimator, i.e., the difference of sample means, which is not asymptotically efficient for the two-sample location shift model.

Some of the tuning-related difficulties may be unavoidable since  external tuning parameters are essential for estimating $g$ in $\mP_0$. This necessity stems from the fact that the search space for $g$, namely the set $\mP_0$, is too large,  lacking enough structure. 
If one aims to estimate $g$ or its functionals without relying on tuning parameters, additional structural  constraints on $\mathcal{P}_0$ might be necessary. This naturally raises the question: \textit{what restriction  on  $\mathcal{P}_0$ would provide sufficient structure for achieving tuning-free estimation of $g$?} In this paper, we impose the shape restriction of log-concavity on  $g$ to facilitate tuning-free  estimation of this density and its functionals. \vspace{.1cm}\\
\textbf{Log-concavity:} A density is called log-concave if its logarithm is a concave function. We propose this restriction for four main reasons, which are described below.\vspace{.1cm}\\
{\textit{Tuning-free density estimation}:} The class of log-concave densities is structurally rich enough to admit a maximum likelihood estimator (MLE) \citep{2009rufi,exist}. The MLE among the class of all log-concave densities  can be efficiently computed  using existing R packages in a tuning-free way \citep{logcondens}. 
 The log-concavity restriction on $\mathcal{P}_0$ enables us to leverage this MLE for constructing tuning-free estimators of $g$ \citep{walther2009inference,smoothed,dumbreg,xuhigh,adaptive2016,barber2020}.\vspace{.1cm}\\
{\textit{Versatility of log-concave density class}: The class of log-concave densities is an important subclass of unimodal densities, containing a list of well-known and well-studied commonly used unimodal  sub-exponential densities, such as, normal, Laplace,  logistic,  gamma (for shape parameter greater than one), and beta (for parameters greater than one). A more exhaustive list can be found in \cite{bagnoli2005}. Additionally, this class is closed under convolution. \vspace{.1cm}\\
  \textit{Availabilty of tests to check log-concavity}: While visually verifying log-concavity from the data can be challenging, several tests  are available in the literature to check log concavity \citep{gangrade2023sequential, asmussen2017distinguishing, dunn2021universal}. If there are grounds to believe that the underlying density is unimodal but not heavy-tailed, then log-concavity is a very plausible assumption \citep{laha2022improved,walther2009inference}. 
    \vspace{.1cm}\\
  \textit{Application in semiparametric models:} Shape constraints related to convexity have already been considered in other semiparametric models such as the one-sample location model \citep[log-concavity,][]{laha2021adaptive}, the single index model \citep[convexity,][]{kuchibhotla}, and the mixture model \citep[log-concavity, ][]{walther2002detecting,walther2009inference}.
  The assumption of log-concavity has also been applied to our two-sample location shift model. \cite{eden}, who developed an adaptive estimator of $\Delta$ in this model, assumed the function $u \mapsto -g'(G^{-1}(u))/g(G^{-1}(u))$ to be non-increasing, which is equivalent to $g$ being log-concave \citep[][Proposition A.1]{bobkov1996}. However, \cite{eden} did not use the term "log-concave" because the literature on log-concave density estimation had not yet developed by their  time. Therefore, \cite{eden}'s approach fundamentally differs from our  approach, which leverages modern tools for log-concave density estimation. 
  \subsection{Main contribution}
  In this paper, the proposed estimator  of $\Delta$  is built upon the one step estimator advocated by \cite{park} and \cite{bickel1982}. As mentioned earlier, a common critique of this estimator has been reliant on the estimation of $g'$ \citep{potgieter2012,hsieh1995empirical}. However, this concern is alleviated under the log-concavity assumption on $\mP_0$, since log-concave MLE-based estimators of $g$ readily provide an estimator of $g'$ \citep{laha2021adaptive}. We adopt the smoothed log-concave MLE technique proposed by \cite{2009rufi,chen2016}. This technique yields  smooth estimators of $g$ and $g'$ without requiring any external tuning parameter for smoothing. 
  
  
  In order to establish the asymptotic efficiency, we use a truncated version of the one step estimator. It is established that the truncated one step estimator is adaptive, provided the truncation level decays to zero (Theorem \ref{theorem: main theorem specific} in Section \ref{sec: asymptotic properties}). This implies that after centering and scaling, the aforementioned estimator is asymptotically normal with variance $\mathcal{I}^{-1}$. Much of the theoretical analysis in this paper is dedicated to proving the adaptivity of the truncated one step estimator. This  proof necessitates a sharp theoretical analysis of the performance of the one step estimators, with intricacies relying on the  log-concave projection theory  and empirical processes \citep{dumbreg,barber2020,review}. Although it seems that the truncation level of the truncated one step estimators is an external tuning parameter, we emphasize that the truncation is introduced primarily for technical reasons in the proof. Our simulations demonstrate that the one step estimators exhibit maximum efficiency and minimum MSE when the truncation level $\eta_{m,n}$ is set to zero. Therefore, for practical implementation of our proposed estimators, we recommend a truncation level of zero. Hence, \textit{no tuning is required} for practical implementation of our estimators. The proposed estimator can be easily implemented using the provided R package \texttt{TSL.logconcave}, made available on Github \citep{TSL}.

The rest of the paper is organized as follows: Section \ref{sec: Preliminaries} presents the mathematical formulation. In Section \ref{sec: method}, we delve into the derivation of the one step estimator for the proposed problem, and we provide an outline of the estimation procedure relying on the log-concave MLE. Section \ref{sec: asymptotic properties} establishes the asymptotic efficiency of the truncated versions of the one step estimator. Section \ref{sec: simulations} conducts extensive simulations to compare our one step estimators (both truncated and untruncated) with existing semiparametric estimators. The detailed proof of Theorem \ref{theorem: main theorem specific} is provided in the Appendix.

\section{Mathematical formulation}
\label{sec: Preliminaries}

We assume that $X_1,\ldots, X_m$ and $Y_1,\ldots, Y_n$ are independent and identically distributed (i.i.d.) draws from $f_0=g_0(\cdot-\mu_0)$ and $h_0=g_0(\cdot-\mu_0-\Delta_0)$, respectively; where $g_0$ is a member of $\mP_{0}.$ We assume $\min(m,n)\to\infty$ and set $N$ to be $m+n$.  Let us denote the distribution functions pertaining  to the densities $f_0$, $h_0$ and $g_0$ by $F_0$, $H_0$ and $G_0$ respectively. The empirical distribution functions corresponding to the $X_i$'s and the $Y_j$'s will be denoted by $\Fm$ and $\Hn$, respectively. Additionally, we  denote the respective sample averages of the $X$ and $Y$ samples by $\bX$ and $\bY$.

Now we formally define the class of log-concave densities, denoted by  $\mathcal{LC}$.  Let us define
 \[\mathcal C:=\lbs \phi:\RR\mapsto[-\infty,\infty) : \phi \text{ is concave, closed, and proper}\rbs.\]
 Here, the concepts of properness and closedness for a concave function adhere to the standard definitions, cf. pages 24 and 50 of \cite{rockafellar}.  We follow the convention that all concave functions  are
defined  across the entire real line $\RR$, and take the value $-\infty$ outside the effective domain $\dom(\phi)=\{x\in\RR:\phi(x)>-\infty\}$ \citep[page 40,][]{rockafellar}. Let $\mathcal{LC}$ be the collection of all densities on $\RR$ such that  $\log g \in \mathcal{C}$. This definition of log-concavity aligns with prior literature \citep{dosssymmetric,doss2019,laha2021adaptive}. Given $g_0$ is an element of the class $\mathcal{LC}$, it follows that $\ps_0 = \log g_0$ is concave, and both $f_0$ and $g_0$ are members of the set $\mathcal{LC}$ as well. The main theoretical result of this paper assumes that $g_0\in\mathcal{LC}\cap\mathcal{P}_0$. \vspace{1em}\\
\underline{\textit{Examples:}}  Examples of  $g_0\in\mathcal{LC}\cap\mathcal{P}_0$ include, but are not limited to, the standard Gaussian density, standard Laplace density,   standard logistic density, centered gamma density with shape parameter greater than two, centered beta density with parameters greater than two, centered Gumbel density, and centered  Weibull density with parameter greater than two. We can show that the above-mentioned densities are all log-concave with finite Fisher information for location.


\subsection{Notation}
\label{sec: notation}

For a sequence of random variables $\{X_n\}_{n\geq 1}$ and a fixed random variable $X$, $X_n\P X$ indicates that $X_n$ converges to $X$ in probability. For a sequence of distribution functions $\{F_n\}_{n\geq 1}$, we say that $F_n$ converges weakly to $F$ and denote it as $F_n \overset{d}{\rightarrow} F$ if, for all bounded continuous functions $h:\RR\mapsto\RR$, we have $\lim\limits_{n\to\infty}\int h dF_n=\int hdF$. 
  The Hellinger distance $\H(f_1,f_2)$ between two densities $f_1$ and $f_2$ is defined as
\[\H^2(f_1,f_2)=\frac{1}{2}\edint (\sqrt f_1(x)-\sqrt f_2(x))^2 dx.\]
Throughout this paper, the standard Gaussian density is denoted by $\varphi$.

Unless otherwise mentioned, for a real-valued function $h$, provided they exist,  $h'$ and $h'(\mathord{\cdot}-)$ refer to the right and left derivatives of $h$, respectively. The support of any density $f$ on $\RR$ is denoted by $\text{supp}(f) = \{x\in\RR: f(x)>0\}$. The interior of any set $A$ is denoted by $\text{int}(A)$. For any probability distribution $P$, $L_2(P)$ denotes the space of all function $h$ with $\int h^2dP<\infty$. We equip $L_2(P)$ with the canonical inner product  $\langle \cdot,\cdot\rangle_{L_2(P)}$. As usual, $\mathbb{N}$ is used to denote the set of natural numbers. Throughout this paper, $C$ denotes an arbitrary constant, whose value might vary 
from line to line. The expression $x\lesssim y$ implies that there exists an absolute constant $C>0$, such that $x\leq Cy$.

\section{Method}
\label{sec: method}

The one step estimator is derived from the efficient influence function, which is a key concept in semiparametric efficiency theory. This function characterizes the sensitivity of an estimator to small perturbations in the data. As mentioned in Section \ref{sec:intro}, the two-sample location shift model is a well-studied semiparametric model \citep{begunhall,bickel1982,park}. 
\cite{park,bickel1982,van2021stein} explore the efficient influence function in the balanced case, i.e., when $m=n$. 
 The balanced case automatically fits into the mold of the one-sample semiparametric theory since, in this case, the problem can be represented as a one-sample problem with bivariate data. We consider the potentially unbalanced case, which demands extra care because, unlike the balanced case,  this scenario does not automatically fit the mold of  one-sample semiparametric theory.

We derive The efficient influence function and the one step estimator are derived for the unbalanced case in Section \ref{sec: derivation}. The efficient influence function for estimating $\Delta$ can be derived working along the lines of \cite{begunhall}. Although our approach shares similarities with \cite{begunhall}, we introduce  essential minor deviations  to ensure its alignment with contemporary semiparametric literature such as \cite{park} and \cite{van2021stein}. While \cite{begunhall} used Hellinger differentiability to define scores, our definitions adhere to the quadratic-mean-differentiability-based convention \citep[][page 362]{vdv}, ensuring consistency with modern semiparametric approaches. Also, the underlying Hilbert space considered in our analysis is $L_2(\mathbb{\PP})$, which differs from \cite{begunhall}, where $L_2$-spaces of Lebesgue-like dominating measures were considered.  If preferred, readers may choose to skip the derivation in Section \ref{sec: derivation} and proceed directly to Section \ref{sec: shape constrained}. Section \ref{sec: shape constrained} uses the derived formula of the one step estimator (as described in Section \ref{sec: derivation}) to construct an estimator of $\Delta$ utilizing the additional log-concavity assumption.

\subsection{Derivation of the one step estimator}
\label{sec: derivation}
 



This section is organized as follows. Section \ref{sec: background} provides a review of essential semiparametric concepts and introduces relevant terminology. Section \ref{sec: the model} is dedicated to the derivation of the efficient influence function for estimating $\Delta$. Following that, Section \ref{sec: one step estimator} derives the desired one step estimator using the efficient influence function. Throughout Section \ref{sec: derivation}, our focus is mainly concerned on the main model \eqref{eq: main model}, assuming $g\in \mathcal{P}_0$. Later in Remark \ref{remark: LC and mP}, we illustrate that the efficient influence function for estimating $\Delta$ remains unchanged in submodels, where $g$ belongs to $\mathcal{P}_0\cap\mathcal{LC}$ or any other subclass of $\mathcal{P}_0$.

    \subsubsection{Preliminary background}
    \label{sec: background}

In this section, we provide a concise overview of some key concepts in semiparametric efficiency theory, which play a crucial role in the subsequent derivations. Readers who are already familiar with this topic may choose to skip this section and proceed to Section \ref{sec: the model}. 

Let $\mathcal V$ be an infinite dimensional set and let $k$ be a positive integer. 
Consider the problem of estimating a finite-dimensional parameter $\theta \in \RR$, in the presence of a finite-dimensional nuisance parameter $v \in \RR^k$ and an infinite-dimensional nuisance parameter $\zeta \in \mathcal{V}$. We assume that the underlying probability distribution $\mathbb{\PP}_{\theta,v,\zeta}$ is entirely determined by $\theta$, $v$, and $\zeta$. Furthermore, suppose $\mathbb{\PP}_{\theta,v,\zeta}$ is a probability measure defined on a measurable space $(\mathcal{U}, \mathcal{B})$, where $\mathcal{U}$ is an Euclidean space, and $\mathcal{B}$ is the corresponding Borel sigma field. To formalize, we let $\mS$  be the   semiparametric model parameterized by  $(\theta,v,\zeta)\mapsto \mathbb P_{\theta,v,\zeta}$, i.e., $\mS$ represents the class of distributions of the form $\mathbb{\PP}_{\theta,v,\zeta}$. Finally, let $\{U_i\}_{i=1}^n$ denote a sequence of $n$ i.i.d. random realizations taking value in $\mathcal U$  from the distribution $\PP_{\theta,v,\zeta} \in \mS$.

The score functions corresponding to $\theta$, $v$, and $\zeta$ are defined following \cite{vdv}, page 372, which we elaborate below. The score functions for the finite-dimensional parameters $\theta$ and $v$ are represented by the usual partial derivatives of the log-likelihood function and are denoted as $l_\theta$ and $l_v$, respectively.  However, to define the nuisance score corresponding to $\zeta$,  we require the proper concept of perturbation for the infinite dimensional parameter $\zeta$.   For this purpose, we introduce the tangent space of $\mathcal V$ at $\zeta$, denoted as $\dot{\mathcal V}(\zeta)$. The tangent space $\dot{\mathcal V}(\zeta)$ is the closed linear span of the tangent set of $\mathcal V$ at $\zeta$. This tangent set is the set of perturbations $b$ by which $\zeta$ can be approximated within $\mathcal V$.  Each perturbation $b$ represents a finite-dimensional submodel $t\mapsto \zeta_t$ of $\mathcal V$, taking the form $\zeta_t=(1+tb)\zeta$ for small values of $t$. These submodels pass through $\zeta$ at $t=0$.  See page 362 of \cite{vdv} for more details on these finite dimensional submodels. 

Let  $\nf(x;\theta,v,\zeta)$ be the density of $\PP_{\theta,v,\zeta}$ at $x$.  Then for any  $b\in\dot{\mathcal V}(\zeta)$, 
the score  corresponding to the perturbation $t\mapsto \zeta_t$ at $\PP_{\theta,v,\zeta}$ is the function $B_b:\mathcal U\mapsto\RR$ given by 
\begin{align}
 \label{def: score space}   
  B_b(u;\theta,v,\zeta)=\pdv{\log \nf(x;\theta,v,\zeta_t)}{t}\bl_{t=0}\quad\text{ for all }u\in\mathcal U.
\end{align}
By $\dot{\mathcal S}(\PP_{\theta,v,\zeta})$, we denote the closed linear span of all such scores $B_b$'s when $b$ ranges across the tangent space $\dot{\mathcal V}(\zeta)$. The space $\dot{\mathcal S}(\PP_{\theta,v,\zeta})$ represents the score space associated with the infinite-dimensional nuisance parameter $\zeta$. Importantly, this space becomes a Hilbert space when equipped with the $L_2(\PP_{\theta,v,\zeta})$ inner product \citep[cf. pp. 372 of][]{vdv}.

The nuisance score space is the closed linear span of all nuisance scores.
The efficient score function of $\theta$ in the model $\mS$ is the orthogonal projection of its score function $l_\theta$ onto the orthocomplement of the nuisance score space. Thus, for fixed values of $\theta$, $v$, and $\zeta$, the efficient score function $l_{\theta,\text{eff}}(\cdot\mid v,\zeta)$ is a function from $\mathcal U$ to $\RR$. The efficient score function is an important quantity for us because it is closely related to the efficient influence function.
 \cite{begunhall} gives a general recipe for deriving the efficient score function of $\theta$ \citep[see also pp. 74 of][]{jonsemi}.    We first project $l_\theta$, the score function of $\theta$, onto the orthocomplement of the closed linear span of the score function of $v$, denoted as $l_v$.  We will refer to the resulting  projection as $l_{\theta\mid v}$, and it would represent the efficient score of estimating $\theta$, had $\zeta$ been known. 
 To obtain the efficient score function for estimating $\theta$ in $\mS$, we   need to find the orthogonal projection of $l_{\theta\mid v}$ onto the orthocomplement of $\dot{\mathcal S}(\PP_{\theta,v,\zeta})$, the  nuisance score space for $\zeta$. For all projection-related operations, we take the underlying Hilbert space to be  $L_2(\PP_{\theta,v,\zeta})$. 



 The Fisher information $\III$ for estimating $\theta$ in $\mS$ is formally defined as the squared expectation of the efficient score function, which can be expressed as $\III = \E[l_{\theta,\text{eff}}(U\mid v,\zeta)^2]$, with the expectation taken under the distribution $\PP_{\theta,v,\zeta}$. In semiparametric efficiency theory, the Fisher information plays a crucial role because its inverse serves as the lower bound on the asymptotic variance of any regular estimator of $\theta$  \citep[Theorem 25.21,][]{vdv}. An estimator $\widehat\theta_n$ of $\theta$ in $\mS$ will be called  asymptotically efficient if it is regular and $\sqn(\widehat\theta_n-\th_0)$ is asymptotically normal with asymptotic variance $\III^{-1}$ \citep[pp. 367][]{vdv}.

In a special case where the score $l_{\theta\mid v}$ is orthogonal to $\dot{\mathcal S}(\PP_{\theta,v,\zeta})$, the efficient score $l_{\theta,\text{eff}}(\cdot\mid v,\zeta)$ becomes equal to $l_{\theta\mid v}$. In this scenario,    the Fisher information for estimating $\theta$ in  $\mS$ equals $\E[ l_{\theta|v}(U)^2]$, which is the Fisher information for estimating $\theta$ in the parametric submodel of $\mS$ where $\zeta$ is known. This phenomenon is referred to as adaptation \citep{begunhall,vdv,bickel1982}. 
Consequently, in adaptive models,  asymptotically efficient estimators of $\Delta$ attain the parametric Cramer Rao lower bound, which is the lowest possible asymptotic variance among the regular estimators in the corresponding parametric model. This implies that in adaptive semiparametric models, not having the knowledge of $\zeta$ does not result in any loss of information when estimating $\theta$.

The efficient influence function is the scaled version of the efficient score and is   defined as follows:   
\begin{align}
    \label{def: efficient influence function}
    \tilde l_{\theta}(u\mid v,\zeta)=\frac{l_{\theta,\text{eff}}(u\mid v,\zeta)}{\III} \quad \text{for all }u\in\mathcal U.
\end{align}
The efficient influence function plays a key role in the theory of asymptotaically efficient estimators because  any  estimator $ \widehat\theta_n$ of $\theta$
 is asymptotically efficient if and only if it admits the following asymptotic expansion \citep[see page 367,][]{vdv}:
\begin{equation}
   \label{expansin: influence function}
  \widehat\theta_n=  \theta+\frac{1}{n}\sum_{i=1}^n \tilde l_{\theta}(U_i\mid v,\zeta)+o_p(n^{-1/2}).
\end{equation}


\subsubsection{Estimation of $\Delta$ in the two-sample location shift model}
\label{sec: the model}
As mentioned earlier, when the sample sizes are unequal,  two-sample models do not directly fit into the framework of the conventional one-sample-based semiparametric framework elucidated above. Therefore, to fit the model in \eqref{eq: main model} into a one-sample mold, following \cite{begunhall},  we introduce an indicator variable $Z$, which indicates  the source sample for each observation. We  assume that we observe $m+n$ independent  replicates of  $(\U,Z)$, where $Z\sim \text{Bernoulli}(\lambda)$. The random variable $\U$ is distributed as $X$ when $Z=1$, and it is distributed as $Y$ when $Z=0$. Consequently, conditional on $Z=z$, the random variable $\U$ has the density
\begin{align*}
   \nf(u\mid z)=\begin{cases}
       g(u-\mu) &\ \text{if } z=0\\
       g(u-\mu-\Delta) & \text{ if } z=1,\quad \text{for all }u\in\RR,
   \end{cases}
\end{align*}
where $g\in\mP_0$. Letting $N=m+n$, we can present our observed data as
\begin{align*}
    (Z_i,U_i)=\begin{cases}
        (1,X_i) & \text{if }i=1,\ldots,m\\
        (0,Y_{i-m}) & \text{if }i=m+1,\ldots,N.
    \end{cases}
\end{align*}

 The joint density  of $(Z,\U)$ writes as 
\[\nf(z,u;\Delta,\mu,\lambda,g)\equiv \nf(z,u)=\slb\lambda g(u-\mu)\srb^z\slb (1-\lambda)g(u-\mu-\Delta)\srb^{1-z}\]
for all $z\in\{0,1\}$ and $u\in\RR$.
Note that, now this model fits into the framework elucidated in Section \ref{sec: background}. In this context, the finite-dimensional parameter of interest, i.e., $\theta$ in Section \ref{sec: background}, corresponds to $\Delta$. The finite-dimensional nuisance parameters, denoted as $v$ in Section \ref{sec: background}, correspond to $(\mu, \lambda)$, while the infinite-dimensional nuisance parameter $\zeta$ corresponds to $g$.

Letting $\PP_{\Delta,\mu,\lambda,g}$ denote 
the probability measure associated with the density $\nf(\cdot;\Delta,\mu,\lambda,g)$, 
we formally  define our model $\mS$ by 
\[\mS:=\lbs \PP_{\Delta,\mu,\lambda,g}: \Delta,\mu\in\RR,\quad  \lambda\in(0,1),\quad g\in\mP_0\rbs.\]
Observe that $\PP_{\Delta,\mu,\lambda,g}$ corresponds to the probability distribution $\PP_{\theta,v,g}$ in Section \ref{sec: background}. 
Thereafter, the score functions corresponding to $\Delta$, $\mu$, and $\lambda$ are denoted by $l_\Delta$, $\l_\mu$, and $l_\lambda$, respectively.\vspace{1em}\\
\textit{\underline{Calculation of the scores}:}
To compute the efficient influence function for estimating $\Delta$, the first step is to calculate the score functions. Let $g'$ be a weak derivative of $g$, which exists because $g \in \mP_0$ is absolutely continuous (see equation \ref{eq: finitieness of FI}). 
Denoting $\psi=g'/g$, we calculate the score functions for $\Delta$, $\mu$, and $\lambda$ as follows:
\begin{gather*}
    l_\Delta(z,u;\mu,\lambda,g):=\pdv{\log \nf(z,u;\mu,\lambda,g)}{\Delta}=-(1-z)\psi'(u-\mu-\Delta),\\
    l_\mu(z,u;\Delta,\lambda,g)=\pdv{\log \nf(z,u;\Delta,\mu,\lambda,g)}{\mu}=-z\psi'(u-\mu)-(1-z)\psi'(u-\mu-\Delta),\\
    l_\lambda(z,u;\Delta,\mu,g)=\pdv{\log \nf(z,u;\Delta,\mu,\lambda,g)}{z}=\frac{z}{\lambda}-\frac{1-z}{1-\lambda}.
\end{gather*}
The Fisher information matrix corresponding to the parameters $(\Delta,\mu,\lambda)$ \citep[cf. page 13 of][]{bickel1982} takes the form 
\begin{align}
\label{def: Fisher information matrix}
    I\equiv I(\Delta,\mu,\lambda,f)=\begin{bmatrix}
  (1-\lambda)\Igg & (1-\lambda)\Igg & 0\\
  (1-\lambda)\Igg & \Igg & 0\\
  0 & 0 & \lambda^{-1}+(1-\lambda)^{-1}\\
    \end{bmatrix}.
\end{align}
As mentioned in Section \ref{sec: background}, to compute the score corresponding to the infinite-dimensional nuisance parameter $g\in\mP_0$, we first need to obtain the tangent space of $\mP_0$ at $g$.  From \cite{begunhall}, it follows that for any $g\in\mP_0$, this tangent space has the form 
\begin{align}
    \label{def: tangent set}
    \dot{\mP}_0(g)=\lbs b\in L_2(G): \edint b(x)g(x)dx=0,\ \edint xb(x)g(x)dx=0\rbs.
\end{align}

Equation \ref{def: score space} shows that for any $b\in\dot{\mP}_0(g)$,    the score corresponding to the perturbation $g_t:t\mapsto (1+tb)g$ at $\PP_{\Delta,\mu,\lambda,g}$ is the function $B_b:\{0,1\}\times\RR\to\RR$. In our case, $B_b$ takes the form
\begin{align}
   \label{def: nisance score operator}
   B_b(z,u;\Delta,\mu,\lambda,g)= \pdv{\log \nf(z,u;\Delta,\mu,\lambda,g_t)}{t}\bl_{t=0}=zb(u-\mu)+(1-z)b(u-\mu-\Delta).
\end{align}
Hence, the score-space corresponding to the nuisance parameter $g$ at $\mathbb P_{\Delta,\mu,\lambda,g}$ is the closed linear span of all functions of the form $B_b$, where $b$ ranges over $\dot{\mP}_0(g)$. We denote this space as $\dot{\mS}(\PP_{\Delta,\mu,\lambda,g})$.\vspace{1em}\\
\textit{\underline{The efficient influence function}:}
Now we proceed to calculate the efficient score function for estimating $\Delta$ in the model $\mS$. 
Let us define
\[I_2=\begin{bmatrix}
    I_{22} & I_{23}\\
    I_{32} & I_{33}
\end{bmatrix}\quad \text{and}\quad I_{1.2}= \begin{bmatrix}
    I_{12} & I_{13}
\end{bmatrix},
    \]
    where $I$ is as defined in \eqref{def: Fisher information matrix}. 
  The $L_2(\PP_{\Delta,\mu,\lambda,g})$ projection of $l_\Delta$ onto the orthocomplement of the closed  linear span of  $l_\mu$ and $l_\lambda$ is given by   \citep[see Proposition 1 of][page 30]{bickel1982} 
  \begin{align*}
l_{\Delta|\mu,\lambda}(z,u; g) &= l_\Delta(z,u) - I_2^{-1}I_{1.2}\begin{bmatrix} l_\mu(z,u) \\ l_\lambda(z,u) \end{bmatrix} \\
&= z(1-\lambda)\psi'(u-\mu) - \lambda(1-z)\psi'(u-\mu-\Delta).
\end{align*}
  The score  $l_{\Delta| \mu,\lambda}$ is the efficient score in the submodel of $\mS$ where $g$ is known.   The latter submodel is a parametric model because $\mu$ and $\lambda$ are the only nuisance parameters in this model. 
 Our discussion in Section \ref{sec: background} implies that the efficient score for estimating $\Delta$ in $\mathcal S$ is the orthogonal projection of  $l_{\Delta| \mu,\lambda}$ onto the orthocomplement of $\dot{\mS}(\PP_{\Delta,\mu,\lambda,g})$, i.e.,  the score space of $g$. 
However, for all  scores $B_b\in \dot{\mS}(\PP_{\Delta,\mu,\lambda,g})$, we have:
\begin{align*}
& \langle B_b, l_{\Delta| \mu,\lambda}\rangle_{L_2(\PP_{\Delta,\mu,\lambda,g})}= E_{\PP}\slbt \slb Zb(U-\mu)+(1-Z)b(U-\mu-\Delta)\srb l_{\Delta|\mu,\lambda}(Z,U)\srbt=0,
\end{align*}
where for any  probability measure $P$, $E_P$ denotes  the expectation with respect to  $P$.
Thus $l_{\Delta| \mu,\lambda}$ is orthogonal to the score-space $\dot{\mS}(\PP_{\Delta,\mu,\lambda,g})$ corresponding to $g$. Therefore, our discussion in Section \ref{sec: background} implies that the model $\mS$ is adaptive and the efficient score function $l_{\Delta,\text{eff}}$ equals   $l_{\Delta| \mu,\lambda}$.
This result indicates that the Fisher information of estimating $\Delta$ in  $\mS$ equals $I(\Delta\mid \mu,\lambda,g)=\E[ l_{\Delta|\mu,\lambda}(Z,U;g)^2]$. Straightforward algebra shows that this expectation equals $\lambda(1-\lambda)\Igg$. 
Then \eqref{def: efficient influence function} implies that the efficient influence function $ \tilde l_{\Delta}(z,u\mid \mu,\lambda,g)$ for estimating $\Delta$ in $\mS$ takes the form
\begin{align}
    \label{def: efficient influence function main}
    \tilde l_{\Delta}(z,u\mid \mu,\lambda,g)=\frac{l_{\Delta| \mu,\lambda}(z,u;g)}{\lambda(1-\lambda)\Igg}=\frac{z\psi'(u-\mu)}{\lambda\Igg}-\frac{(1-z)\psi'(u-\mu-\Delta)}{(1-\lambda)\Igg}.
\end{align}
Since the Fisher information for estimating $\Delta$ in $\mS$ equals  $\lambda(1-\lambda)\Igg$,  a regular estimator $\widehat\Delta$ of $\Delta$ is asymptotically efficient if 
 \[\sqrt{N}(\widehat\Delta-\Delta)\to_d\N\left(0,\frac{\Igg^{-1}}{\lambda(1-\lambda)}\right).\]
Note that $m/N=\sum_{i=1}^N{Z_i}/N\to_p\lambda$ and $n/N=1-m/N\to_p 1-\lambda$. Therefore,  by Slutskey's theorem, asymptotic efficiency of $\widehat\Delta$ is equivalent to 
 \begin{equation}
     \label{def: asymptotic eff}
     \sqrt{\frac{mn}{N}}(\widehat\Delta-\Delta)\to_d\N\left(0,\Igg^{-1}\right).
 \end{equation}

\begin{remark}
\label{remark: LC and mP}
 Suppose, instead of $g\in\mP_0$, we consider $g\in\mP'\subset\mP_0$ in \eqref{eq: main model}. This leads to the model $\mS'$, which can be represented as $\{\PP_{\Delta,\mu,\lambda,g}: \Delta,\mu\in\RR,\lambda\in(0,1),g\in\mP'\}$. In an extreme case where $\mP'=\{g_0\}$ is a singleton set, $\mS'$ becomes a parametric model. The Fisher information for estimating $\Delta$ in this parametric model is easily checked to be $\lambda(1-\lambda)\I$. Generally, since $\mP'\subset \mP_0$, the finite-dimensional submodels of $\mP'$ passing through $g$ will also lie in $\mP_0$. Thus, if $b$ is a perturbation at $g$ in the submodel $\mP'$, it is also a perturbation with respect to the bigger model $\mP_0$. Therefore, the tangent space resulting from such perturbations will be a subset of $\dot{\mP_0}$, and the corresponding score space $\dot{S}'(\mathbb P_{\Delta,\mu,\lambda,g})\subset \dot{S}(\mathbb P_{\Delta,\mu,\lambda,g})$. Consequently, if $l_{\Delta\mid \mu,\lambda}$ is orthogonal to the larger score space $\dot{S}(\mathbb P_{\Delta,\mu,\lambda,g})$, it is also orthogonal to the subspace $\dot{S}'(\mathbb P_{\Delta,\mu,\lambda,g})$. Hence, adaptation in the larger model $\mS$ implies adaptation in all the submodels of the form $\mS'$. Therefore, the Fisher information for estimating $\Delta$ in any submodel of the form $\mS'$ is also $\lambda(1-\lambda)\Igg$ at $\PP_{\Delta,\mu,\lambda,g}$. The efficient influence function for estimating $\Delta$ also remains the same across all subsets of $\mS'$, given by \eqref{def: efficient influence function main}. In particular, in the special case when $\mP'=\mathcal{LC}\cap\mP_0$, the efficient Fisher information for estimating $\Delta$ in the corresponding submodel $\mS'$ remains $\lambda(1-\lambda)\Igg$, and $\tilde l_{\Delta}$ remains the efficient influence function. Therefore, having prior knowledge of log-concavity does not improve the Fisher information for estimating $\Delta$.
 
\end{remark}

\subsubsection{One step estimator}
\label{sec: one step estimator}

 In this section, we derive the one step estimator for estimating $\Delta$ in $\mS$ when $\Delta=\Delta_0$, $\mu=\mu_0$, and $g=g_0$. 
Since the efficient influence function for estimating $\Delta_0$ has the form in \eqref{def: efficient influence function main}, by equation \ref{expansin: influence function}, an estimator of $\Delta_0$ is asymptotically efficient if and only if it admits  the asymptotic expansion
\begin{equation}
\label{def: one step estimator: general}
\Delta_0+\frac{1}{N}\sum_{i=1}^N\lb \frac{Z_i\psi_0'(U_i-\mu_0)}{\lambda\Io}-\frac{(1-Z_i)\psi_0'(U_i-\mu_0-\Delta_0)}{(1-\lambda)\Io}\rb+o_p(N^{-1/2}).    
\end{equation}

Suppose we have at our disposal $\sqrt{N}$-consistent preliminary estimators  $\overline{\Delta}$ and $\overline{\mu}$ of $\Delta_0$ and $\mu_0$, respectively. Further, suppose we have reasonable preliminary estimators of $g_0$ and $\psi_0'$, denoted by $\hn$ and $\hln$, respectively.
Then an estimator of $\Io$ is readily given by:
\begin{equation}\label{definition: untruncated Fisher information estimate}
 \hi=\edint\hln'(x)^2\hn(x)dx.
 \end{equation} 
Finally, we estimate $\lambda$ by its maximum likelihood estimator $m/N$. Then using the asymptotic expansion of \eqref{def: one step estimator: general},  we can define  an  estimator  of $\Delta_0$  as follows:
\begin{align}
\label{def: untruncated one step estimator step 1}
    \uDelta=\bDelta+\sum_{i=1}^N\lb \frac{Z_i\hln'(U_i-\bmu)}{m\hi}-\frac{(1-Z_i)\hln'(U_i-\bmu-\bDelta)}{n\hi}\rb.
\end{align}
The estimator in \eqref{def: untruncated one step estimator step 1} is known as the one step estimator (see page 43 of \citeauthor{bickel1982}, \citeyear{bickel1982} or page 72 of \citeauthor{vdv}, \citeyear{vdv}). 
Replacing the $U_i$'s in the definition of $\uDelta$ by $X_i$'s and $Y_i$'s, we rewrite \eqref{def: untruncated one step estimator step 1}  as
\begin{align}
    \label{def: untruncated one step estimator}
  \uDelta= &\  \bDelta + \sum_{i=1}^m \frac{\hln'(X_i-\bmu)}{m\hi} - \sum_{i=1}^n\frac{\hln'(Y_i-\bmu-\bDelta)}{n\hi}\\
= &\ \bDelta +  \edint \frac{\hln'(x-\bmu)}{\hi}d\FF_m(x) -\edint\frac{\hln'(y-\bmu-\bDelta)}{\hi}d\mathbb H_n(y). \nn
\end{align}

The estimator in \eqref{def: untruncated one step estimator} is motivated by the asymptotic expansion in \eqref{def: one step estimator: general} and the semiparametric theory behind it. However, we want to highlight a subtle  difference. The one step estimator $\uDelta$ in \eqref{def: untruncated one step estimator} can be well-defined even when the asymptotic expansion in \eqref{def: one step estimator: general}  is not applicable. The semiparametric model $\mS$, and consequently the expansion in \eqref{def: one step estimator: general}, requires $\lambda\in(0,1)$.  Note that $\lambda$ is interpreted as the  limit of $m/N$ under the model $\mS$. However, $\uDelta$ remains a valid estimator even when $m/N\to 0$ or one. In fact, the definition of $\uDelta$ in \eqref{def: one step estimator: general} does not require $m/N$ to have a limit at all.   Section \ref{sec: asymptotic properties} will show that we  only require $\min(m,n)\to\infty$ to establish the adaptivity of the proposed one step estimators.   This aligns with the asymptotic behavior observed in   \cite{eden} and \cite{beran}'s estimators.
\subsection{Shape-constrained estimation of $\Delta_0$}
\label{sec: shape constrained}
In Section \ref{sec: derivation}, we derived the traditional one step estimator of $\Delta_0$, which is given by  \eqref{def: untruncated one step estimator}. However, for the sake of theoretical analysis, we use a truncated variant of this one step estimator.    Section \ref{sec: truncated one step} discusses the construction of this one step estimator. All one step estimators require preliminary estimators of $\Delta_0$, $\mu_0$, $g_0$, and its log-derivative $\psi_0'$. 
 Section \ref{sec: preliminary estimators} elaborates on the utilization of log-concave density estimators to  compute the one step estimators.

\subsubsection{Truncated one step estimator}
\label{sec: truncated one step}


The asymptotic analysis of the one step estimator in \eqref{def: untruncated one step estimator} poses challenges. To elaborate,  during the performance analysis of the one step estimator, 
 some form of uniform bound is required on the growth rate of $\hln'$ to apply empirical process tools and convergence results. However, since we impose minimal restriction on $\psi_0$ apart from concavity, we have  limited control on the asymptotic behaviour of $\hln'$ at the tails, irrespective of the chosen estimator. Thus, obtaining uniform rate results on $\hln'$ can be challenging.
In the literature on one step estimators, a commonly adopted strategy to tackle such challenges is the trimming of extreme observations during the construction of the one step estimator. This practice results in what is commonly known as the truncated one step estimator \citep{laha2021adaptive,stone,park}.  Hence, for the theoretical analysis presented in this paper, we confine our focus to the truncated one step estimator. However, we include the estimator in \eqref{def: untruncated one step estimator}  in our numerical experiments in Section \ref{sec: simulations}, and henceforth, we refer to it as the untruncated one step estimator.

We trim $\etan$-proportion of the data, where we let $\etan$ tend to  $0$. 
For $\etan\in(0,1/2)$,  we use the notations $\xi_1$ and $\xi_2$, respectively, to represent the $\etan$-th and $1-\etan$-th quantiles of the distribution $G_0$.
Suppose $\hG$ is the distribution function of $\hn$. Then   $\xia=\hG^{-1}(\etan)$ and  $\xib=\hG^{-1}(1-\etan)$ are  reasonable estimators of  $\xi_1$ and $\xi_2$, respectively. Although it would be ideal for the notation of $\xi_1$, $\xi_2$, $\xia$, and $\xib$ to explicitly depend on $m$ and $n$,  we omit this explicit dependency on sample sizes to steamline notation. Now we are ready to introduce our truncated one step estimator, which is given by 
\begin{align}\label{definition: truncated one step estimator}
\hDelta=&\ \bDelta+\dint_{\xia+\bmu}^{\xib+\bmu} \dfrac{\hln'(x-\bmu)}{\hi(\etan)}d\Fm(x)\nonumber\\
&\ -\dint_{\xia+\bmu+\bDelta}^{\xib+\bmu+\bDelta} \dfrac{\hln'(y-\bmu-\bDelta)}{\hi(\etan)}d\Hn(y),
\end{align}
where
\begin{equation}\label{definition: truncated Fisher information estimate}
 \hi(\etan)=\dint_{\xia}^{\xib}\hln'(x)^2\hn(x)dx
 \end{equation} 
  is the truncated  estimator of the  Fisher information $\I$.

We want to clarify a point here. Apparently truncation introduces the tuning parameter $\etan$, which represents the level of truncation.  However, our numerical experiments in Section \ref{sec: simulations} show that when we consider the shape-constrained version of $\hDelta$ (to be discussed in Section \ref{sec: shape constrained}), its variance monotonically decreases as $\eta_{m,n}$ approaches zero. In these experiments, the untruncated  one step estimator in \eqref{def: untruncated one step estimator} consistently demonstrates the lowest variance and the lowest MSE among the shape-constrained one step estimators. Consequently, we recommend the latter  estimator  for practical implementations. In  Section \ref{sec: shape constrained}, we will see that the shape-constrained version of  $\hDelta$ requires no additional tuning parameter other than $\etan$, and the shape-constrained version of $\uDelta$ is tuning free. Thus, although the theoretical analyses of this paper require truncation for technical reasons, the proposed estimator of this paper is tuning-free. This distinguishes our approach from   existing nonparametric methods, where optimal values of tuning parameters are typically unknown, and extensive tuning procedures are often required to identify optimal ranges for these parameters.

\subsubsection{Preliminary estimators of $\Delta_0$, $\mu_0$, and $g_0$}
\label{sec: preliminary estimators}
The current section discusses our choices of $\bmu$, $\bDelta$, and $\hn$. Our focus is primarily on the construction of $\hn$, which heavily relies on the maximum likelihood estimator of log-concave densities.\vspace{1em}\\
\textit{\underline{Preliminary estimators of $\Delta_0$ and $\mu_0$}:}
In general, one step estimators require the preliminary estimators of the finite dimensional parameters to be $\sqrt{n}$-consistent for achieving asymptotic efficiency (see page 72 of \cite{vdv}; see also \cite{park,beran}). 
Our theoretical analysis also  suggests that $\bmu$ and $\bDelta$ have to be $\sqrt{N}$-consistent for $\hDelta$ to be asymptotically  efficient. Under the model defined in equation \eqref{eq: main model}, we have $\mu_0=\E[X]$ and $\Delta_0=\E[Y]-\E[X]$. Therefore, if $g_0$ possesses a finite second moment, $\bmu=\bX$ and $\bDelta=\bY-\bX$ are $\sqrt{n}$-consistent estimators for $\mu_0$ and $\Delta_0$, respectively. It is worth noting that we can substitute the sample means in the above formulas  with other $\sqrt{n}$-consistent estimators of location, such as the sample median \citep[particularly in the log-concave case,][]{laha2021adaptive}, or the Z-estimator for the shift in the logistic shift model \citep[see Example 5.40 and Theorem 5.23 of][]{vdv}. However,  simulations showed that the sample-mean-based preliminary estimators tend to exhibit the best overall performance in terms of efficiency. Therefore, for the purpose of this paper, we take $\bmu$ and $\bDelta$ to be $\bX$ and $\bY-\bX$, respectively.


\vspace{1em}
\noindent\textit{\underline{Preliminary estimator of $g_0$}:}
As mentioned in Section \ref{sec:intro}, we take our estimator of $g_0$ to be log-concave. A vast literature exists on the estimation of log-concave densities \citep{2009rufi,chen2016,dumbreg}. Notably, the MLE within the class of log-concave densities does exist and is computationally tractable  \citep{2009rufi,exist}. However, this MLE cannot be used directly to estimate $g_0$ since our observations, i.e., the $X_i$'s and the $Y_j$'s, are not sampled from $g_0$. Nevertheless, $X_i-\mu_0$ and $Y_j-\Delta_0-\mu_0$ have density $g_0$. Therefore, we first construct the pseudo-observations $X_i-\bmu$'s and $Y_j-\bDelta-\bmu$'s for $i=1,\ldots,m$ and $j=1,\ldots,n$. Then we calculate the log-concave MLE  based on these $m+n$ pseudo-observations. Given that this density estimator is formed by merging pseudo-observations from both the $X$ and $Y$ samples, we refer to it as the pooled estimator for $g_0$ and denote it by $\hl$.

However, $\hl$ is non-smooth due to the non-smooth form of the log-concave MLE.  The lack of smoothness is a common feature of the MLEs of shape-constrained classes \citep{grenander,2009rufi,Balabdaoui2007}. In particular, the logarithm of the log-concave MLE is piecewise affine, with knots belonging to the underlying sample \citep{2009rufi}. However, the lack of smoothness can  potentially  lead to suboptimal  performance in small samples \citep{2009rufi, smoothed, review}. To address this issue, smoothing techniques utilizing a Gaussian kernel can be employed to smooth the log-concave MLE  \citep{smoothed}. The smoothed version of $\hl$  takes  the form
\begin{equation}
    \label{def: smoothed pooled estimator}
    \hat g_{m, n}^{\text{pool,sm}}(x)=\frac{1}{\hatlambda}\edint\varphi\left(\frac{x-t}{\hatlambda}\right)\hl(t)dt,
\end{equation}
where  $\varphi$ is the standard Gaussian density and $ \hatlambda$ is the smoothing parameter. Although it might seem that $\hatlambda$ is an external tuning parameter, that is not the case, as \cite{smoothed} provides a data-dependent closed formula for computing $\hatlambda$.

For our problem, this formula yields 
\begin{align}
\label{def: smoothing parameter}
    \hatlambda^2=\widehat s_n^2-\widehat\sigma^2_n,\quad\text{where}\quad  \widehat\sigma^2_n=\edint z^2\hl(z)dz-\lb\edint z\hl(z)dz \rb^2
\end{align}
is the variance of $\hl$, and $\widehat s_n^2$ is the sample variance of the combined sample.
Given that the overall mean of the combined sample of $X_i-\bmu$'s and $Y_j-\bmu-\bDelta$'s is zero,  the sample variance $\widehat s_n^2$ is given by 
\[\widehat s_n^2=\frac{\sum_{i=1}^m(X_i-\bmu)^2+\sum_{j=1}^n(Y_j-\bDelta)^2}{N-1}.\]
The difference $\widehat s_n^2-\widehat\sigma^2_n$ in \eqref{def: smoothing parameter} is positive by (2.1) of \cite{smoothed}. Consequently, the $ \hatlambda$ in \eqref{def: smoothing parameter} is well-defined.

We will refer to the estimator $ \hat g_{m, n}^{\text{pool,sm}}$ 
as the ``pooled-smoothed estimator", which signifies that it is derived through a two-step process. This process involves data pooling, where pseudo-observations are combined from both the $X$-sample and the $Y$-sample, followed by a smoothing step. 
  The pooled-smoothed estimator $ \hat g_{m, n}^{\text{pool,sm}}$ will serve as our preliminary estimator $\hn$. This density estimator is smooth and supported  on the entire real line $\RR$, ensuring that $\hln=\log\hn$ and its derivative are well-defined at all points on $\RR$.
Additionally, it can be shown that $\int\hat x g_{m, n}^{\text{pool,sm}}(x)dx=0$, indicating that this density estimator is centered, similar to the density $g_0$.

\vspace{1em}
\noindent\textit{\underline{Other possible choices for $\hn$}:}
One might come up with other potential choices for $\hn$. For instance, instead of combining the $X$-sample and the $Y$-sample, one might opt to estimate $g_0$ separately from both samples. A weighted average of the resulting estimators could yield an improved  density estimator.
A smooth $\hn$ can be obtained by computing  smoothed log-concave MLEs \citep{smoothed} from both samples during the above construction. However, averaging destroys the log-concavity structure,  making the weighted estimators described above non-log-concave. The pooled estimator $\hl$, as discussed earlier, is another potential choice for $\hn$. However, while this estimator is log-concave, it is non-smooth.
While these estimators could theoretically lead to asymptotically efficient $\hDelta$, simulations have revealed that the $\hDelta$ derived from our pooled-smoothed estimator outperforms those obtained from these other  estimators. As a result, we have excluded the weighted estimators and the non-smooth pooled estimator from further consideration in this paper.

\begin{remark}
Unimodality is a more naturally occurring shape constraint compared to log-concavity, and the class of unimodal densities is a strict superset of the class of log-concave densities \citep{walther2009inference}. Therefore, unimodality may seem like an alternative choice for the shape restriction on $g$. However, the class of unimodal densities is too large to admit an MLE when the mode is unknown \citep{birge1997}. Therefore, the unimodality constraint does not offer the computational advantages of the log-concavity constraint \citep{laha2021adaptive}. Hence, we opt for log-concavity instead of unimodality as the shape restriction.
\end{remark}

%
%

%



\section{Asymptotic properties}
\label{sec: asymptotic properties}
In this section, we establish the asymptotic efficiency of the truncated one step estimator $\hDelta$ provided $\eta_{m,n}\to 0$ at a sufficiently slow rate. In what follows, we will assume that $g_0\in\mathcal{LC}\cap\mathcal{P}_0$. 
First we present a technical assumption on $\psi_0$, which requires $\psi_0'$ to be globally Lipschitz on $\iint(\dom(\psi_0))$. 
\begin{assumption}
 \label{assump: L}
 There exists $\kappa>0$ so that  
 \[|\psi_0'(x)-\psi'_0(y)|\leq \kappa|x-y|\quad\text{ for all }x,y\in\iint(\dom(\psi_0)),\]
 where  $\psi_0'(x)$ (or $\psi_0'(y)$) can be either  the left or the right derivative of $\psi_0$ at $x$ (or $y$).
 \end{assumption}
When $g_0$ is log-concave, $\psi_0$ is concave, and hence the left and right derivative of $\psi_0$ exist on $\iint(\dom(\psi_0))$ \citep[see page 163][]{hiriart2004}. When  $\psi_0$ is twice differentible, Assumption \ref{assump: L} reads as $|\psi_0''|\leq \kappa$. \cite{beran}, who also considered adaptive estimation in the two-sample location shift problem,  imposed  a similar restriction  on a related function $t\mapsto \psi_0(G_0^{-1}(t))$. Their Assumption B implies that the second derivative of the  latter function is continuous on $[0,1]$. Since $[0,1]$ is compact, this assumption  implies $\psi_0\circ G_0^{-1}$ has a bounded second derivative on $[0,1]$. 
Boundedness assumptions on the second derivative  is also quite common in the shape-constrained litertaure \citep{laha2021adaptive,kuchibhotla,ramu2018}. 
We also want to clarify that Assumption \ref{assump: L} is  required for technical reasons in the proof, and it may not be necessary. As we will see, the Gamma distribution in our simulation setting 4  in Section \ref{sec: simulations} violates  Assumption \ref{assump: L} because  $\psi_0''(x)=-3/x^2$ in this case. However, our simulations in Section \ref{sec: simulations} suggest that $\hDelta$ remains  asymptotically efficient under this setting.


Now we are ready to present the main theorem of this section, which establishes the asymptotic efficiency of $\hDelta$ as defined by \eqref{def: asymptotic eff}. Its proof is provided in the  Appendix.
\begin{theorem}
\label{theorem: main theorem specific}
Suppose $X$ and $Y$ are as in \eqref{eq: main model} and  $g_0\in\mathcal{LC}\cap\mP_0$. Let $\eta_{m,n}=C N^{-2\varrho/5}$ where $C>0$ is a constant and $\varrho\in(0,1/4]$. Further suppose $\min(m,n)\to\infty$. Then, under  Assumption \ref{assump: L}, the truncated one step  estimator $\hDelta$ with preliminary estimators $\bmu=\bX$, $\bDelta=\bY-\bX$, and $\hn=\hls$ satisfies 
    \[\sqrt{\dfrac{mn}{N}}(\hDelta-\Delta_0)\to_d \N(0,\Io^{-1}),\]
where $\Io$ is the Fisher information corresponding to $g_0$.
\end{theorem}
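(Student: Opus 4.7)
The plan is to establish the one-step oracle expansion
\begin{equation*}
\hDelta - \Delta_0 = \frac{1}{m\Io}\sum_{i=1}^m \psi_0'(X_i - \mu_0) - \frac{1}{n\Io}\sum_{j=1}^n \psi_0'(Y_j - \mu_0 - \Delta_0) + o_p(N^{-1/2}).
\end{equation*}
Once this is in hand, the two sums are built from independent samples; each summand has mean zero (since $\int \psi_0'(x) g_0(x)\,dx = \int g_0'(x)\,dx = 0$ by the decay of log-concave densities) and variance $\Io$, and a direct bookkeeping gives that $\sqrt{mn/N}$ times the right-hand side has total variance $(n/N)\Io^{-1} + (m/N)\Io^{-1} = \Io^{-1}$. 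The standard CLT applied to each of the two independent sums separately, combined with Slutsky (using $\min(m,n) \to \infty$), then delivers the $\mathcal{N}(0, \Io^{-1})$ limit.

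To produce the oracle expansion, I would proceed through three successive approximations inside $\hDelta - \Delta_0$. First, swap $\hi(\etan)$ for $\Io$: the (smoothed) log-concave MLE theory of \cite{2009rufi, chen2016, dumbreg} delivers uniform convergence of $\hls$ and $\hln'$ on compact subsets of $\iint(\dom(\psi_0))$, which together with $\etan \to 0$ yields $\hi(\etan) \to_p \Io$ and hence only a multiplicative $o_p(1)$ error. Second, swap $\hln'$ for $\psi_0'$ inside the truncated integrals; this produces an empirical-process contribution $\sqrt{N}(\Fm - F_0)[(\hln' - \psi_0')\mathbf{1}_{[\xia, \xib]}(\cdot - \bmu)]$ (and its $Y$-analogue) together with a deterministic bias $\int_{\xia}^{\xib}(\hln' - \psi_0')(u) g_0(u)\,du$. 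Third, Taylor-expand $\psi_0'$ about the true centering using Assumption \ref{assump: L}:
\begin{equation*}
\frac{1}{m}\sum_{i=1}^m \psi_0'(X_i - \bmu) = \frac{1}{m}\sum_{i=1}^m \psi_0'(X_i - \mu_0) - (\bmu - \mu_0)\frac{1}{m}\sum_{i=1}^m \psi_0''(X_i - \mu_0) + O_p(N^{-1}),
\end{equation*}
where the quadratic remainder is $O_p(N^{-1})$ from the $\sqrt{N}$-consistency of $\bmu$ and the global Lipschitz bound on $\psi_0'$. The LLN combined with $\E[\psi_0''(Z)] = -\Io$ (integration by parts, using the tail decay of $g_0$) collapses the correction term to $\Io(\bmu-\mu_0)$, which after division by $\Io$ contributes exactly $\bmu - \mu_0$. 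The analogous calculation for the $Y$-integral contributes $\bmu + \bDelta - \mu_0 - \Delta_0$, and combined with the leading $\bDelta - \Delta_0$ term sitting outside the integrals, all nuisance-parameter contributions cancel, leaving precisely the target expansion.

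The main obstacle is controlling the empirical-process and bias pieces arising from the $\hln' \to \psi_0'$ swap. Because $g_0 \in \mathcal{LC}$ has sub-exponential tails, the true quantiles $\xi_1, \xi_2$ grow only logarithmically as $\etan \to 0$, and Assumption \ref{assump: L} together with uniform consistency of $\hls'$ on compacts permit the function class $\{\hln' \mathbf{1}_{[\xia, \xib]}\}$ to be controlled by a random envelope of order $O_p(\log^c N)$ and to approximate $\psi_0'$ in $L_2(g_0)$ at rate essentially $N^{-2/5}$ (the sup-norm rate of the smoothed log-concave MLE on compacts). A maximal inequality for the localized empirical process combined with entropy estimates from the log-concave literature \citep{review, barber2020} then yields the empirical-process piece as $o_p(N^{-1/2})$ provided $\etan$ decays no faster than $N^{-2\varrho/5}$ with $\varrho \leq 1/4$, which is exactly the rate stipulated. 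The deterministic bias is handled by Cauchy--Schwarz against the same $L_2(g_0)$-rate. Balancing these rates — the density-estimation rate, the empirical-process complexity, and the behavior of $\hls'$ near the endpoints of $[\xia, \xib]$ where it can diverge relative to $\psi_0'$ — is where the bulk of the technical work lies.
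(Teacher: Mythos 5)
There is a genuine gap in how you propose to handle the deterministic bias arising from the $\hln' \to \psi_0'$ swap. After that swap, the $X$-piece of the one-step correction contains a term of the form
\begin{equation*}
\sqrt{m}\dint_{\xia}^{\xib}\bigl(\hln'(u)-\psi_0'(u)\bigr)g_0(u)\,du,
\end{equation*}
and you propose to control it via Cauchy--Schwarz against the $L_2(g_0)$ error of $\hln'-\psi_0'$ on $[\xia,\xib]$. This cannot work: the sharpest rate the paper establishes for $\int_{\xia}^{\xib}(\hln'-\psi_0')^2 g_0$ is $O_p\bigl((\log N)^c N^{-4p/5}\bigr)$ with $p=1/4$ (Lemma~\ref{FI Lemma: Lemma 1}), so the $L_2(g_0)$ norm is $O_p\bigl(N^{-1/10}(\log N)^{c/2}\bigr)$, and even your more optimistic $N^{-2/5}$ figure gives a bias that, after multiplying by $\sqrt{m}$, diverges like $N^{1/10}$. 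To make the Cauchy--Schwarz route work you would need an $o_p(N^{-1/2})$ rate for a nonparametric estimator of a score function, which is out of reach. Your Step~3 cancellation correctly eliminates the $\bmu,\bDelta$ contributions, but that mechanism is separate and does not touch this $g_0$-estimation bias.

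The paper resolves this differently: it never shows the bias $\sqrt{m}\int_{\xia}^{\xib}\hln'(z)g_0(z)\,dz$ is small in isolation. Instead, the entire point of the decomposition in \eqref{inoutline:x}--\eqref{inoutline: y} is that the \emph{same} quantity $\int_{\xia}^{\xib}\hln'(z)g_0(z)\,dz$ appears in the expansions of both $A$ (the $X$-integral) and $B$ (the $Y$-integral), because after re-centering by $\bmu$ and $\bmu+\bDelta$ the integration is over the identical range $[\xia,\xib]$ against the identical density $g_0$. Consequently it cancels \emph{exactly} in $A-B$, regardless of its size. This is the same structural fact that makes the two-sample problem adaptive, and without invoking it the proof cannot close; you should replace the Cauchy--Schwarz step with this observation. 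The remainder of your plan (reduction to the oracle expansion, $\hi(\etan)\to_p\Io$ via log-concave MLE consistency, control of the empirical-process term via bracketing/Donsker bounds with a logarithmic envelope, and the Taylor/LLN argument showing the $\mu$-bias contributes $\Io(\bmu-\mu_0)$) is aligned with what the paper does, and the stipulated rate $\etan\sim N^{-2\varrho/5}$ with $\varrho\le 1/4$ is tied to the Hellinger rate $O_p(N^{-1/4})$ of $\hls$ exactly as you surmise.
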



The rate of decay of the truncation parameter $\eta_{m,n}$ in Theorem \ref{theorem: main theorem specific} merits some discussion. Proposition \ref{prop: pooled estimator satisfies Conditions} in the Appendix shows that the Hellinger distance between  $\hls$ and $g_0$
is of the order $O_p(N^{-1/4})$.  Theorem \ref{theorem: main theorem specific} requires $\eta_{m,n}$ to decay to zero at a slower rate than the latter. Thus Theorem \ref{theorem: main theorem specific} calibrates   $\eta_{m,n}$'s decay rate by the Hellinger error rate of $\hls$. 
 To demonstrate that the Hellinger error of $\hls$ is $O_p(N^{-1/4})$, we utilize the Hellinger continuity theory of log-concave projection developed by \cite{barber2020}. However, given the Hellinger error of the log-concave MLE is of the order $O_p(N^{-2/5})$ \citep[cf.][]{dossglobal}, the   actual rate of Hellinger error decay for $\hls$ may be faster than $O_p(N^{-1/4})$.
Although a sharper rate of Hellinger error decay would allow for a smaller truncation level $\eta_{m,n}$ in Theorem \ref{theorem: main theorem specific}, its impact in this work  will be nominal. This is because the theoretical truncation levels will not be used  during practical implementations.

Although we  present Theorem \ref{theorem: main theorem specific} for our specific  preliminary estimators, the assertion actually holds for a broader class of preliminary estimators. In fact, we  only require $\bmu$ and $\bDelta$ to be $\sqrt{N}$-consistent estimators of $\mu_0$ and $\Delta_0$, respectively. Additionally, $\hn$  needs to satisfy some consistency conditions including those related to the Hellinger error. A more generalized version of Theorem \ref{theorem: main theorem specific} can be found in Appendix \ref{sec: pf architecture}.

As previously mentioned, our empirical study indicates that the untruncated one step estimator performs as well as, or even better than, the truncated one step estimators. While we currently lack theoretical results regarding the asymptotic distribution of the untruncated one step estimator, our simulations lead us to conjecture that Theorem \ref{theorem: main theorem specific} holds for this estimator as well.\vspace{1em}\\
\underline{\textit{Proof outline:}}
Although the detailed proof of Theorem \ref{theorem: main theorem specific} is provided in the Appendix, we offer a concise overview of the proof here to provide some intuition.
 A pivotal and challenging aspect of our proof involves demonstrating that   $\hln'(x)$ does not grow faster than a polynomial rate when  $x\in[\xia,\xib]$. 
 Achieving such bounds on $\hln'$ necessitates a degree of control over the Hellinger error of $\hls$.
 
To this end, we show that $\H(\hls,g_0)=O_p(N^{-1/4})$, as mentioned previously. Proving this Hellinger error bound  is a major part of the proof, which also yields  that $\hin(\etan)\to_p\Io$. Our proof technique is tailored specifically to the truncated one step estimator, as extending it to the untruncated version would require a uniform bound on $\hln'$—a challenge that currently remains unresolved.

The rest of the proof hinges on the following  decomposition of  $\hDelta$:
\begin{align}
\label{inoutline: initial}
 \hDelta - \bDelta=&\ \underbrace{\dint_{\xia+\bmu}^{\xib+\bmu} \dfrac{\hln'(x-\bmu)}{\hi(\etan)}d\Fm(x)}_{A}-\underbrace{\dint_{\xia+\bmu+\bDelta}^{\xib+\bmu+\bDelta} \dfrac{\hln'(y-\bmu-\bDelta)}{\hi(\etan)}d\Hn(y)}_B. 
\end{align}
Using $\hin(\etan)\to_p\Io$ and the abovementioned bound on $\hln'$ on   $[\xia,\xib]$,  $A$ can be decomposed as follows: 
\begin{align}
\label{inoutline:x}
  A\ \approx\ \parbox{5em}{empirical process\\ term}\ +\ \text{CLT term}_A\ +\ \parbox{5em}{bias due to \\ estimating $g_0$}+\underbrace{\mu_0-\bmu}_{\text{bias due to estimating }\E[X]} +o_p(N^{-1/2}).
\end{align}
The central limit theorem (CLT) term is an important term in the above expansion since it contributes to the desired  asymptotic normality.  It is the average of some independent random variables based on the $X$ sample, and hence amenable to the central limit theorem.  
 The empirical process term is asymptotically negligible, and we use Donsker's theorem to establish its asymptotic negligibility.  The goal of the asymptotic expansion in \eqref{inoutline:x} is to separate the CLT term and the bias terms from the other asymptotically negligible terms. 
 
Similarly,  $B$'s asymptotic expansion writes as
 \begin{align}
 \label{inoutline: y}
  B\ \approx\  \parbox{5em}{empirical process\\ term}\ +\ \text{CLT term}_B\ +\  \parbox{5em}{bias due to\\ estimating $g_0$}\ +\  \underbrace{\mu_0+\Delta_0-\bmu-\bDelta}_{\text{bias due to estimating }\E[Y]} \ +\ o_p(N^{-1/2}),
\end{align}
 where as before, the CLT term contributes to the final asymptotic normality and the empirical process term is asymptotically negligible. The CLT term from $B$, being   an average based on the $Y$ sample, is independent of the CLT term resulting from the expansion of $A$. 
 
 The key point here is that the bias due to the estimation of $g_0$ in \eqref{inoutline:x} and \eqref{inoutline: y} are identical, and they cancel each other in the expression of $A-B$ in \eqref{inoutline: initial}.  Subtracting the CLT term of $B$ from that of $A$ and rearranging  \eqref{inoutline: initial} lead to the desired asymptotic normality in Theorem \ref{theorem: main theorem specific}.\vspace{1em}\\
 \underline{\textit{Comparison with the one sample case:}}
\cite{laha2021adaptive} developed a one step estimator of the location in the one-sample symmetric location model using log-concavity assumptions similar to ours. They demonstrated that their truncated one step estimator is adaptive when the log-concavity assumption holds. The main distinction between \cite{laha2021adaptive}'s setting and ours is that the underlying density $g_0$ is symmetric in \cite{laha2021adaptive}'s one-sample location model.  Therefore,  \cite{laha2021adaptive}'s estimators of $g_0$ are  different from ours. While \cite{laha2021adaptive} included a smooth density estimator among their density estimators, this density estimator was non-log-concave and substantially different from our $\hls$. Consequently, the Hellinger rate analysis of our smooth density estimator, a crucial component in proving Theorem \ref{theorem: main theorem specific}, differs from \cite{laha2021adaptive}. Additionally, the overall calculation in our case is more technically involved than \cite{laha2021adaptive}'s one-sample case due to additional complexities arising from dealing with two samples.
\section{Simulation Study}
\label{sec: simulations}
In this section, we compare the performance of the proposed estimator with that of some other asymptotically efficient estimators available in the literature. We particularly choose the estimators of \cite{park} and \cite{beran} as comparators since they involve fewer tuning parameters than most existing semiparametric estimators of $\Delta_0$. Additionally, we compute the parametric Maximum Likelihood Estimator (MLE) to serve as a benchmark for evaluating the performance of the semiparametric estimators.

\subsection{Simulation design}
\label{sec: simu design}

First, we describe the common simulation setup. We consider the following four schemes for data generation:
\begin{enumerate}
    \item $g_0\sim \N(0,1)$, $\mu=0$, $\Delta=1$.
    \item $g_0$ is the standard logistic density, $\mu=0$, and $\Delta=1$.
    \item $g_0$ is the standard Laplace density, $\mu=0$ and $\Delta=1$.
    \item $g_0=f_0(\cdot+2)$ where $f_0\sim \text{Gamma}(4,0.5)$. Here,  for $a,b>0$, we take  $\text{Gamma}(a,b)$ to be the Gamma distribution with shape $a$ and scale $b$. Since the expectation of $\text{Gamma}(4,0.5)$ is $2$,  it follows that $g_0$ is centered. We generate  the $X$-observations  from $\text{Gamma}(4,0.5)$, which means $\mu=2$. As before, we take $\Delta=1$.
\end{enumerate}
In all the scenarios mentioned above, $g_0$ is log-concave. 
The Fisher information for settings 1, 2, 3, and 4 are 1, 1/3, 1, and 2, respectively. 
We take $m=n$, with $n$ taking on values from the set $\{40, 100, 200, 500\}$. This results in four configurations for each data generating scheme.  For each sample size and each data-generating scheme, we generated 4000 Monte Carlo samples.

For our proposed one step estimators, we choose the preliminary estimators as discussed in Section \ref{sec: preliminary estimators}. Specifically, we set $\bmu=\bX$, $\bDelta=\bY-\bX$, and take the preliminary estimator $\hn$ of $g_0$ to be the pooled-smoothed estimator $\hls$.  We consider four  choices for the truncation levels, given by, $\eta=0, 0.01, 0.001$, and  $0.0001$. Note that the level $\eta = 0$ corresponds to the untruncated  one step estimator in \eqref{def: untruncated one step estimator}.  Theorem \ref{theorem: main theorem specific} implies that when $m=n$,  a 95\% confidence interval of $\Delta_0$ is given by
\begin{align}
    \label{def: confidence interval}
    \left(\hDelta-z_{0.025}\sqrt{2n^{-1}/\hin(\etan)},\ \hDelta+z_{0.025}\sqrt{2n^{-1}/\hin(\etan)}\right),
\end{align}
where $z_{.025}$ is the $0.975$th quantile of the standard Gaussian distribution. For the untruncated one step estimator $\uDelta$, we consider the following confidence interval:
\[
    \left(\uDelta-z_{0.025}\sqrt{2n^{-1}/{\hin}},\ \uDelta+z_{0.025}\sqrt{2n^{-1}/{\hin}}\right),
\]
where $\hin$ is the untruncated Fisher information estimator in \eqref{definition: untruncated Fisher information estimate}. 
Finally, to distinguish our one step estimator from that proposed by \cite{park}, we refer to these one step estimators as shape-constrained one step estimators. 

\subsubsection{Comparators}
Both of the considered comparators require $\sqrt{n}$-consistent preliminary estimators of $\Delta_0$ and $\mu_0$. We choose $\bmu$ as $\bX$ and $\bDelta$ as $\bY-\bX$ to maintain consistency with the shape-constrained one step estimators. Further details on these estimators are provided below.
\vspace{1em}\\
\noindent\textit{\underline{\cite{park}'s estimator}:}
\cite{park} considers the joint estimation of location and scale shifts in the two-sample location-scale model and provides a truncated one step estimator for the location shift.
 Since our model does not involve scale-shift, we take the  scale-shift to be one while computing the above estimator. 
From \cite{park}, it follows that the resulting estimator of location shift is asymptotically efficient under the model in \eqref{eq: main model}.  The main differences between \citeauthor{park}'s one step estimator and our $\hDelta$ are as follows: (a) they take $\hn$   to be a kernel density estimator based on the logistic kernel and (b)  they employ a smooth truncation where the influence function estimators gradually decrease to zero outside a compact interval.   This procedure necessitates two tuning parameters:  an external smoothing parameter $b_{m,n}$ for selecting the kernel bandwidth  and  a truncation parameter $c_{m,n}$.
According to \cite{park}, the one step estimator is not  sensitive to the level of truncation. In line with \cite{park}, we set $c_{m,n} = 8$. However, \cite{park}  reports their one step estimator to be sensitive to the choice of the smoothing parameter $b_{m,n}$ and proposes a tuning scheme for it. 
Their tuning scheme searches for the $b_{m,n}$ that minimizes a bootstrap estimator of the MSE of the one step estimator in the interval $[0.0005, 1.0]$. We follow this tuning scheme to select $b_{m,n}$. 
We calculate a 95\% confidence interval for \citeauthor{park}'s estimator similarly to \eqref{def: confidence interval}.
In this case, we use \citeauthor{park}'s estimator of $\Io$. Their estimator is similar to our truncated Fisher information estimator  $\hin(\etan)$ except that the  score estimates are derived from the kernel density estimator of $g_0$ and the truncation follows the  scheme outlined in their paper.\vspace{1em}\\
\noindent\textit{\underline{\cite{beran}'s estimator}:}
\cite{beran} uses a rank-based estimator developed by \cite{kraft1970} for estimating $\Delta_0$. Their method relies on preliminary estimators and the estimators of the scores and the Fisher information. The scores are computed using a Fourier series expansion, and the Fisher information is estimated using the estimated Fourier coefficients. We calculate a 95\% confidence similarly to \eqref{def: confidence interval}, where we incorporate the aforementioned estimator of the Fisher information.
\citeauthor{beran}'s estimator involves two tuning parameters: the number of Fourier basis functions $M_b$ and a smoothing parameter $\vartheta$. The smoothing parameter is required for nonparametrically estimating the Fourier series coefficients.

\cite{beran} does not provide guidance on the selection of these tuning parameters. We considered 75 pairs of $(M_b, \vartheta)$  over the two-dimensional grid $\{10, 20, 30, 40, 50\}$ $\times$ $\{0.1, 0.2, \ldots, 1.5\}$. For each pair and each simulation scheme, we estimated the MSE and coverage of  \citeauthor{beran}'s estimator using 400  Monte Carlo samples. For each scheme, two pairs of $(M_b, \vartheta)$ were chosen: 
 one minimizing the estimated MSE and the other maximizing the estimated coverage of the confidence intervals. We refer to the corresponding estimators as the MSE-tuned \citeauthor{beran}'s estimator and the coverage-tuned \citeauthor{beran}'s estimator, respectively.
While our tuning procedure for \citeauthor{beran}'s estimator requires knowledge of the data generating mechanism, we adopt this tuning scheme to ensure that the resulting estimators offer a competitive benchmark for evaluating our method. 

To benchmark the performance of our the semiparametric estimators, we also use $\widehat\Delta_{\text{MLE}}$, the parametric MLE  of $\Delta_0$. It is an oracle estimator in the sense that it  assumes  the knowledge of the density $g_0$. The asymptotic variance of the parametric MLE is $2\Io^{-1}/n$, which leads to the following 95\% confidence interval for $\Delta_0$ based on $\widehat\Delta_{\text{MLE}}$:
\begin{equation}
    \label{def: CI of MLE}
    \left(\widehat\Delta_{\text{MLE}}-z_{0.025}\sqrt{2n^{-1}/\Io},\ \widehat\Delta_\text{{MLE}}+z_{0.025}\sqrt{2n^{-1}/\Io}\right).
\end{equation}
In settings 1 and 3, the parametric MLE corresponds to the difference between the means and the medians of the two samples, respectively. For the logistic and gamma densities in settings 3 and 4, the parametric MLE of $\Delta_0$ is numerically obtained by maximizing the respective log-likelihood functions. In addition to these estimators, the preliminary estimator $\bDelta=\bY-\bX$ is included as a baseline for comparison. To construct an asymptotic confidence interval for the preliminary estimator, we apply the central limit theorem and estimate the corresponding variance term using the method of moments estimator:
 \[\frac{\sum_{i=1}^n(Y_i-\bY)^2+\sum_{i=1}^m(X_i-\bx)^2}{m+n-2}.\]


\subsubsection{Performance measures}
The primary performance measure of interest is the efficiency.  We  define the  efficiency of an estimator $\widehat{\Delta}$ of $\Delta_0$  as follows:
\begin{equation}
\label{def: efficiency}
    \text{Efficiency}(\widehat{\Delta})=\frac{\text{Var}(\widehat\Delta_{\text{MLE}})}{\text{Var}(\widehat{\Delta})}, 
\end{equation}
where $\widehat\Delta_{\text{MLE}}$ is the parametric MLE. In large samples, it is expected that the efficiency of all  estimators would be smaller than or equal to one. Higher values of the efficiency are preferred. To compute the efficiency, we estimate the variance of each estimator, including the parametric MLE, using the 4000 Monte Carlo samples. Figure \ref{fig:efficiency}  illustrates the efficiency of various estimators.  Figure \ref{fig:MSE}  plots the scaled (by $mn/(m+n)$) MSE. 
In addition to efficiency, we also provide estimates of the coverage of the confidence intervals using the 4000 Monte Carlo samples. We also investigate the width of the confidence intervals, an important aspect of our  comparison study. Assuming that the coverage of an estimator is satisfactory, narrower confidence intervals are preferred. Figures \ref{fig:coverage} and \ref{fig:CI_width} display the coverage and widths of the confidence intervals, respectively.

\subsection{Results}

First, we analyze the performance of the shape-constrained estimators. Figures \ref{fig:1} and \ref{fig:2} reveal that the shape-constrained one step estimators with truncation level  $\eta\leq 0.001$ consistently exhibit high efficiency, high coverage, and low MSE across all experimental settings.  In terms of efficiency and MLE, these estimators either outperform or perform comparably to \citeauthor{park} and \citeauthor{beran}'s estimators, with a minor exception in the Gaussian case, where they exhibit slightly lower efficiency and higher MSE. 
However, even in the Gaussian case, their efficiency remains close to 0.90  in large sample sizes at smaller truncation levels.  The shape-constrained estimators achieve coverage rates that are highly competitive with the parametric MLE and noticeably higher than those of \citeauthor{park} and \citeauthor{beran}'s estimators.  While the confidence intervals of the shape-constrained estimators tend to be wider in small samples, their width decreases rapidly as the sample size grows, becoming comparable to other estimators in large samples.

A comparison of different truncation levels for the shape-constrained estimator reveals that the coverage and the width of the confidence intervals remain fairly consistent regardless of the truncation level $\eta$. In fact, when $\eta\leq 0.001$, these aspects are nearly identical. However, Figure \ref{fig:1} indicates that the efficiency and the MSE  of the shape-constrained estimator improve as the truncation level $\eta$ decreases, although this difference becomes almost negligible when $\eta\leq 10^{-4}$. The untruncated one step estimator exhibits the highest efficiency and lowest MSE among all shape-constrained estimators. Therefore, it is evident that the untruncated one step estimator offers the best overall performance among the shape-constrained estimators, making it our recommended choice for practical implementation.


Note that, as mentioned earlier, Assumption \ref{assump: L} is not satisfied in the case of the gamma distribution in setting 4. However, it is observed that the simulations do not refute the possibility that $\hDelta$ is asymptotically  efficient in this setting. A closer look at Figures \ref{fig:efficiency} and \ref{fig:coverage} reveals that in this scenario, the variance and the coverage of the truncated one step estimators are very close to that of the parametric MLE, particularly in case of large samples.  
This observation indicates that Assumption \ref{assump: L} may not be a necessary condition for Theorem \ref{theorem: main theorem specific}.

Now we turn our attention to the semiparametric comparators, i.e., \citeauthor{beran} and \citeauthor{park}'s estimators. 
The MSE-tuned \citeauthor{beran}'s estimator and \citeauthor{park}'s estimator show similar trends due to their similar tuning procedures, which is based on either minimizing the MSE (\citeauthor{beran}) or its bootstrap estimator (\citeauthor{park}).  They demonstrate the highest efficiency among the semiparametric estimators in the Gaussian case. However, in the Laplace and Gamma scenarios, their efficiency is noticeably lower than that of the semiparametric estimators. Figure \ref{fig:MSE} also indicates that in the last two cases, their MSE is higher than that of the shape-constrained estimators. Additionally, their coverage is the lowest among all estimators. The coverage-tuned \citeauthor{beran}'s estimator behaves differently from the aforementioned two estimators due to its distinct tuning procedure that prioritizes the coverage.  While it achieves moderate coverage, it has the lowest efficiency among all estimators. Furthermore, its exhibited coverage is still lower than that of the shape-constrained estimators, as shown in Figure  \ref{fig:coverage}.

\begin{figure}[h]
    \centering
    \begin{subfigure}{\textwidth}
    \centering
         \includegraphics[width=\linewidth, height=2.5in]{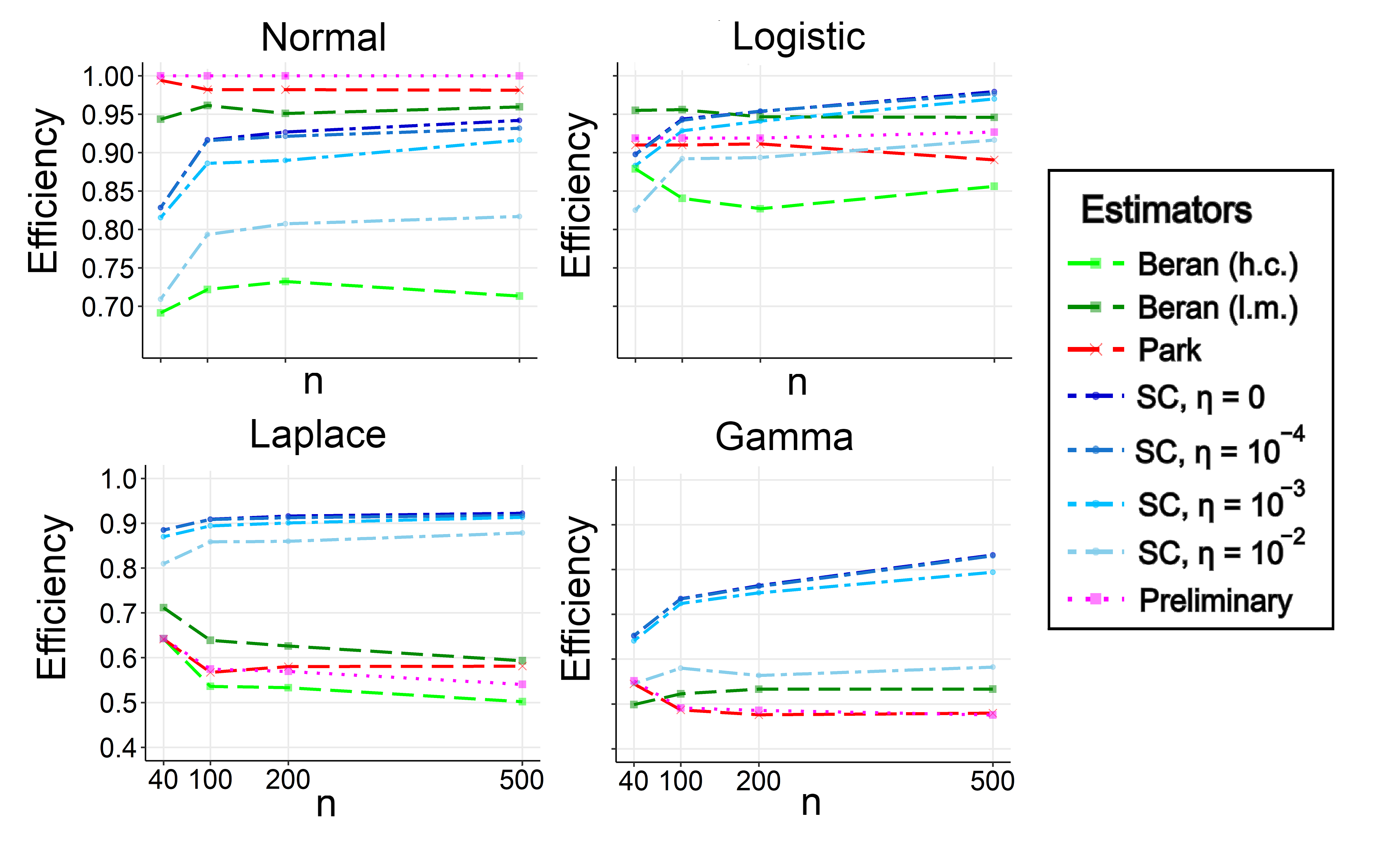}
         \caption{Comparison of the efficiency of different estimators. The  parametric MLE is omitted since its efficiency is one by \eqref{def: efficiency}.}
          \label{fig:efficiency}
    \end{subfigure}
    \begin{subfigure}{\textwidth}
    \centering
         \includegraphics[width=\linewidth, height=2.5in]{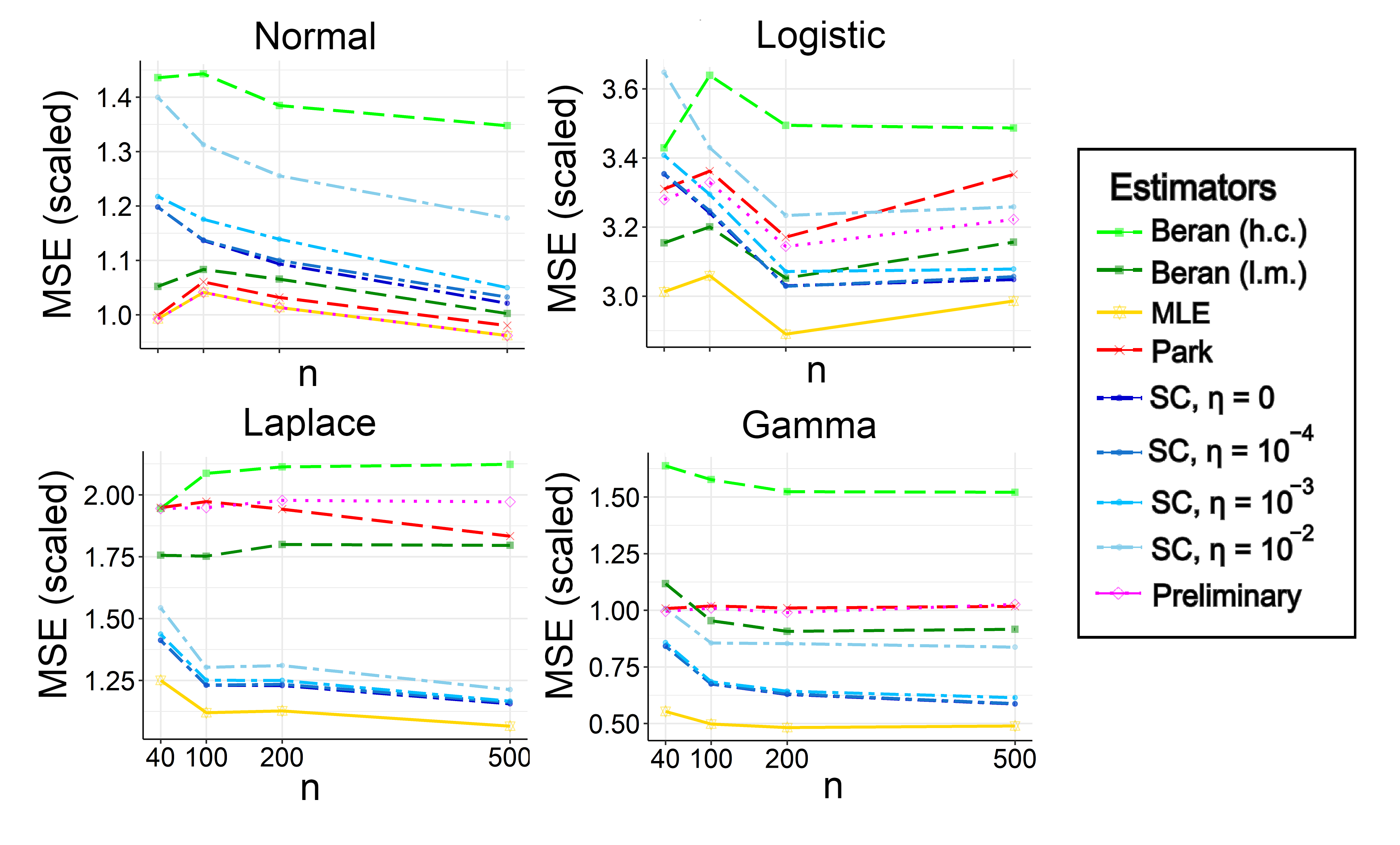}
         \caption{Comparison of the scaled (by $mn/(m+n)$) MSE of different estimators. Since $m=n$, the scaling factor becomes $n/2$.}
         \label{fig:MSE}
    \end{subfigure}
     \caption{ Plot of the efficiency and the MSE of the confidence intervals. Shape and scale of gamma distribution are 4 and 0.5, respectively. Estimators: \textit{Beran (h. c.)} and \textit{Beran (l. m.)} correspond to the (highest) coverage-tuned and (lowest) MSE-tuned \citeauthor{beran}'s estimators, respectively; \textit{Park} denotes  \citeauthor{park}'s estimator; \textit{SC} stands for the shape-constrained estimators where  $\eta$ stands for the truncation level $\etan$; \textit{preliminary} denotes the preliminary estimator $\bDelta=\bY-\bX$. }
   \label{fig:1}
\end{figure}


\begin{figure}[h]
    \centering
    \begin{subfigure}{\textwidth}
    \centering
         \includegraphics[width=0.9\linewidth, height=2.5in]{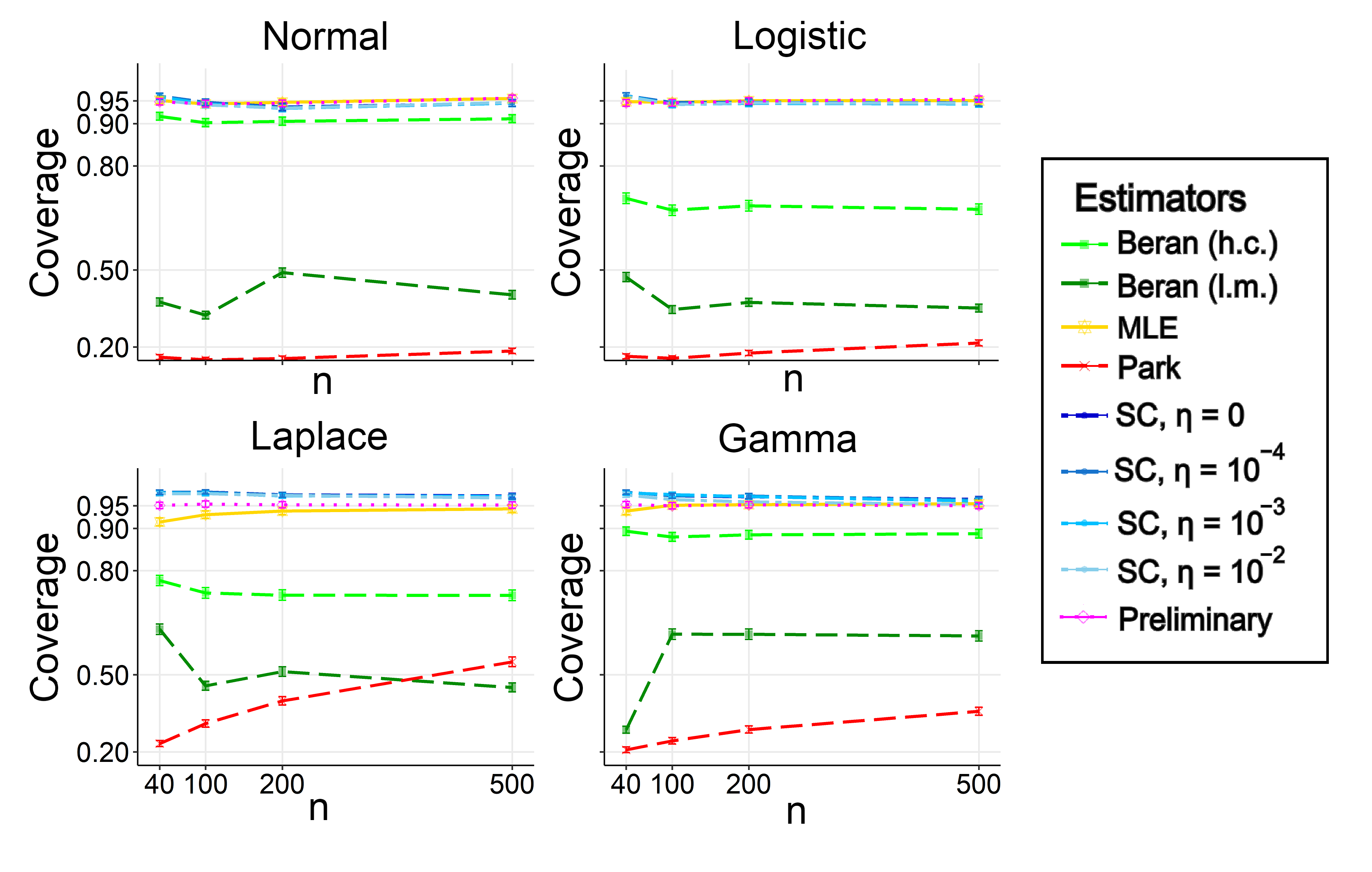}
         \caption{Comparison of the coverage of the 95\% confidence intervals.  The error-bars are given by $\pm$2 standard deviations.}
         \label{fig:coverage}
    \end{subfigure}
     \begin{subfigure}{\textwidth}
    \centering
         \includegraphics[width=0.9\linewidth, height=2.5in]{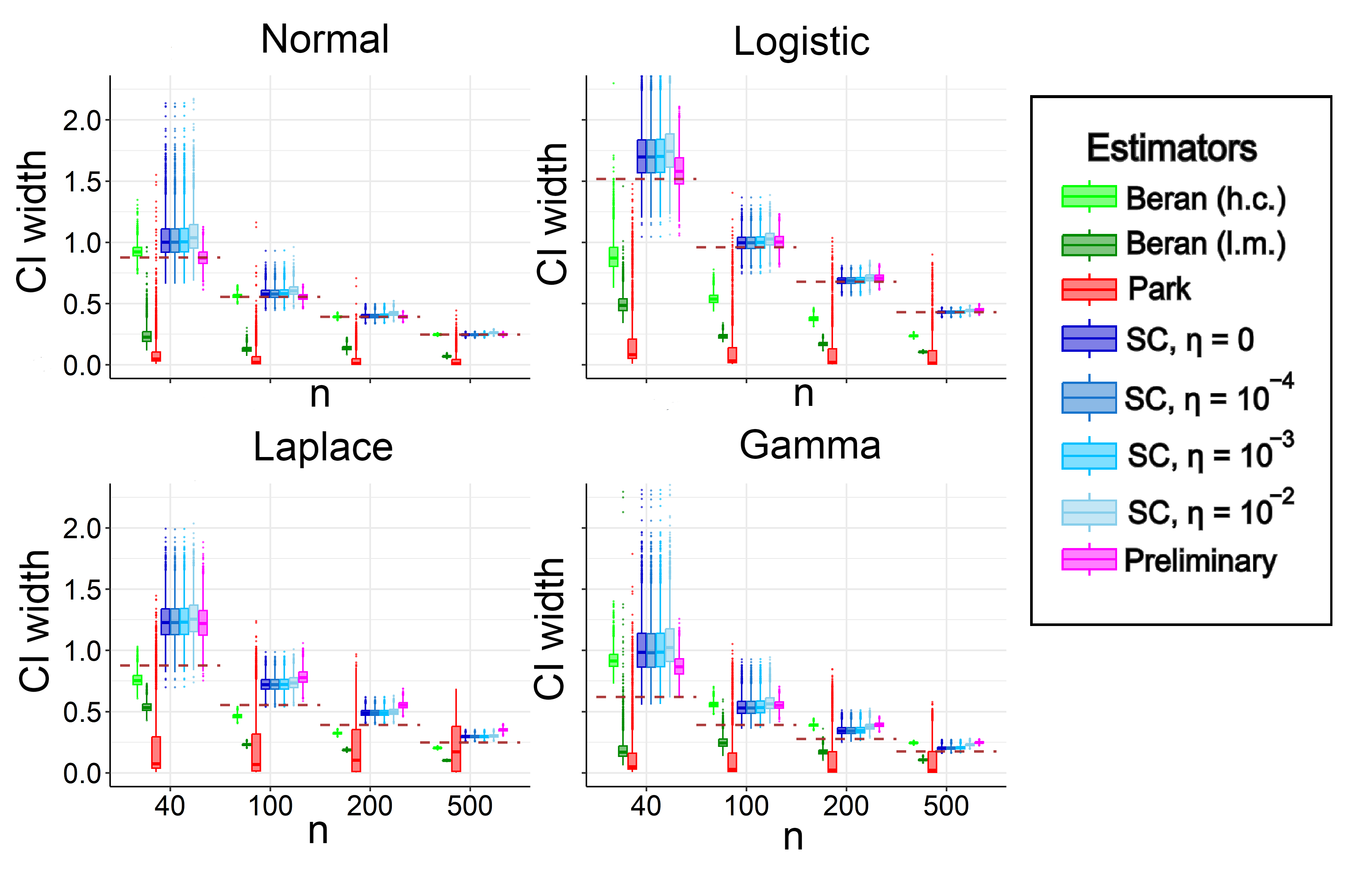}
         \caption{Comparison of the widths of the confidence intervals. The brown line represents the width of the confidence interval for the parametric MLE, as defined by \eqref{def: CI of MLE}.}
          \label{fig:CI_width}
    \end{subfigure}
     \caption{ Plot of the coverage and the width of the confidence intervals. Shape and scale of gamma distribution are 4 and 0.5, respectively. Estimators: \textit{Beran (h. c.)} and \textit{Beran (l. m.)} correspond to the (highest) coverage-tuned and (lowest) MSE-tuned \citeauthor{beran}'s estimators, respectively; \textit{Park} denotes  \citeauthor{park}'s estimator; \textit{SC} stands for the shape-constrained estimators where  $\eta$ stands for the truncation level $\etan$; \textit{preliminary} denotes the preliminary estimator $\bDelta=\bY-\bX$. }
    \label{fig:2}
\end{figure}

A closer inspection of  \citeauthor{park}'s estimator 
   and the MSE-tuned version of \citeauthor{beran}'s estimator brings out a revelation that might partially explain the reason behind their underperformance in the simulation study experiments. We discovered that both of these methods tend to overestimate the Fisher information in our simulations. In the case of \cite{park}, the Fisher information estimator also exhibits high variance. This has two-fold effects. Firstly, the overestimation of the Fisher information leads to excessively narrow confidence intervals. This is due to the inverse proportionality between the width of confidence intervals and Fisher Information estimates. Notably,  in Figure \ref{fig:CI_width}, the two estimators in question exhibit narrower confidence intervals compared to the parametric MLE. Moreover, the width of \citeauthor{park}'s confidence interval is highly variable due to the high variance of its Fisher information estimator. The narrowing of confidence intervals explains the  lack of coverage for these two intervals.  Secondly, since   \citeauthor{park}'s estimator is a one step estimator, from \eqref{def: one step estimator: general}, it can be seen that the overestimation of the Fisher information leads to negligible correction over the preliminary estimators. \citeauthor{beran}'s estimator's formula in \cite{beran} indicates that it also behaves in a similar way when the Fisher information is overestimated.  Consequently, both \citeauthor{park}'s estimator and the MSE-tuned version of \citeauthor{beran}'s estimator become nearly identical to the preliminary estimator when the Fisher information is overestimated.
    Therefore, these estimators exhibit high efficiency when the preliminary estimator is the parametric MLE, e.g., setting 1, but exhibit lower efficiency when the preliminary estimator has low efficiency (e.g., setting 3 and 4); see Figure \ref{fig:efficiency}\footnote{However, the coverage of these two estimators is different than that of the preliminary estimators because we used different variance estimators to construct the confidence intervals for the preliminary estimators.}. Furthermore, in the case of \citeauthor{park}'s estimator, we have verified that in large samples,  minimizing the actual MSE leads to smoothing parameters and efficiency that are quite similar to those obtained through the implemented bootstrap-based tuning method. The above hints that finding the optimal tuning parameters for \citeauthor{beran}'s and \citeauthor{park}'s estimators might involve subtleties beyond minimizing a reliable estimator of the MSE. 
   
   

    To summarize, when the shape constraints are satisfied, the shape-constrained one step estimators exhibit high coverage and efficiency  across all settings. Among them, the untruncated one step estimator has the best overall performance, making it stand out as the most pragmatic estimator for practical purposes. In contrast, the semiparametric comparators, i.e., \citeauthor{park} and \citeauthor{beran}'s estimators,  may exhibit high efficiency in some settings, but their efficiency may be low in other settings. Also, it appears that they generally result in confidence intervals with lower coverage.

\section{Discussion}
In this paper, we illustrate how introducing an additional assumption of log-concavity effectively addresses the challenge of external tuning parameters in the two-sample location shift problem. We establish that the  truncated one step estimators are adaptive when the truncation level decays to zero sufficiently slowly. Simulation experiments conducted across a range of scenarios provide evidence suggesting that the optimal truncation level is zero. Based on this insight, we recommend the untruncated one step estimator as the preferred estimator for this problem. This finding aligns with a broader trend in nonparametric problems, encompassing density estimation, regression, location estimation, and Hellinger distance estimation, where introducing shape restrictions has been shown to alleviate the tuning parameter burden \citep{2009rufi,kuchibhotla,laha2021adaptive,laha2022improved,review,walther2009inference}. Our research contributes to this growing body of research, highlighting the benefits of employing shape constraints for tuning parameter-free inference. We discuss two intriguing avenues for future research below.\vspace{0.1cm}\\
\textit{Extension to higher dimension:} In this case,  $X$ and $Y$ can be $d$-dimensional vectors, where $d\in\mathbb{N}$. The techniques for obtaining the efficient influence function can be generalized to this multidimensional scenario  as long as  $d$ is fixed. However, the algorithm for computing the smoothed log-concave MLE, proposed by \cite{smoothed}, tends to be slow in higher dimensions \citep{xuhigh,review}. Nevertheless,  recent algorithmic advancements by \cite{chen2021new} in computing high-dimensional log-concave  MLE  hold promise for extending log-concavity-based inference to higher dimensions.\vspace{0.1cm}\\
\textit{Misspecified case (non-log-concave $g_0$):} Using the log-concave projection theory developed by \cite{dumbreg}, one may show that  our proposed estimator of $g_0$ would consistently estimate the log-concave projection of $g_0$ onto the class $\mP_0\cap\mathcal{LC}$ when $g_0$ is non-log-concave, provided some weak moment conditions  hold.  In light of \cite{laha2019}'s findings in the symmetric location model,   we anticipate   $\hDelta$  to remain a consistent estimator of $\Delta_0$ even in the misspecified case. However, it might no longer be asymptotically efficient.

\begin{acks}[Acknowledgments]
The authors are grateful to Dr. Jon Wellner and Dr. Rajarshi Mukherjee for their valuable insights.
\end{acks}

\begin{funding}
Nilanjana Laha's research has been partially supported by the NSF-DMS grant DMS-2311098.
\end{funding}
\begin{supplement}
\textbf{Appendix A:} Attached, contains the proof of Theorem \ref{theorem: main theorem specific}.\vspace{1em}\\
\textbf{Code:} R package \texttt{TSL.logconcave} is made available on Github \footnote{https://github.com/nilanjanalaha/TSL.logconcave}.
\end{supplement}
 \bibliographystyle{imsart-nameyear}
 \bibliography{location_estimation}
\FloatBarrier
\newpage
\begin{center}
\begin{huge}
    Appendix
\end{huge}
\end{center}
 \appendix
 Appendix \ref{sec: pf architecture} provides a high-level overview of the proofs, demonstrating that proving Theorem \ref{theorem: main theorem specific} relies on establishing Proposition \ref{prop: pooled estimator satisfies Conditions} and Theorem \ref{theorem: main theorem}. The proofs of Proposition \ref{prop: pooled estimator satisfies Conditions} and Theorem \ref{theorem: main theorem} are presented in Appendices \ref{sec: proof of the conditions} and \ref{app: proof of Theorem 1}, respectively. Appendix \ref{secpf: auxilliary} compiles the proofs of the auxiliary lemmas used in the preceding proofs. Finally, Section \ref{sec: technical facts} presents some technical facts used in the proofs.

\section{Proof architecture}
\label{sec: pf architecture}
 Before going into further details, we will provide an outline of the proof of Theorem \ref{theorem: main theorem specific}. 
 First, we will show that it is enough to prove Theorem \ref{theorem: main theorem specific} when $m/N$ has a limit. Suppose  the assertion of Theorem \ref{theorem: main theorem specific} holds  when $m/N$ has a limit. Consider  $\eta_{m,n}=CN^{-2\varrho/5}$ where $C>0$,  $\varrho\in(0,1/4]$, and  let $P_{m,n}$ be  the measure induced by the distribution  of $\sqrt{mn/N}(\hDelta-\Delta_0)$. Also, let $\PP$ denote the measure induced by a  centred Gaussian distribution with variance $\Io^{-1}$. Note that proving  Theorem \ref{theorem: main theorem specific} is equivalent to showing that $P_{m,n}$ converges weakly to $\PP$ for any  $\varrho\in(0,1/4]$ and $C>0$.  
Because the sequence $m/N$ is  bounded, Bolzano-Weierstrass theorem implies that given  any subsequence of $(m,n)$, we can find a further subsequence $(m_k,n_k)$ so that  $m_k/(m_k+n_k)$ converges to a limit. Replacing $(m,n)$ with this subsequence $(m_k,n_k)$, by our hypothesis, we obtain that $P_{m_k,n_k}$ converges weakly to $\PP$. Therefore, we have shown that given any subsequence of $P_{m,n}$, we can find a further subsequence $P_{m_k,n_k}$ that weakly converges to $\PP$. The latter statement implies that $P_n$ converges weakly to $\PP$  \citep[see  Theorem 2.6 of][for a proof of this result]{billingsley2013}. Hence, we have shown that  it suffices to prove Theorem \ref{theorem: main theorem specific} when $m/N$ has a limit. Therefore, in what follows, without loss of generality, we assume that $m/N$ has a limit in $[0,1]$. We do a notation overload and denote this limit as $\lambda$. Note that previously we used $\lambda$ in Section \ref{sec: the model}.

As mentioned in Section \ref{sec: asymptotic properties}, establishing the rate of Hellinger error decay for our $\hn=\hls$ is an important step of the proof. We establish that $\hls$ satisfies some consistency-type results, which will be used in obtaining the Hellinger rate. In particular, we will show that $\hls$ satisfies two conditions. These conditions, stated in terms of $\hn$, will pertain to the consistency requirement and the rate of Hellinger decay, respectively.

\begin{condition}
\label{condition: hn basic}
Suppose $\hn$ is an estimator of $g_0$. Let $\{\zeta_{m,n}\}_{m,n\geq 1}\subset\RR$  be any stochastic sequence such that $\zeta_{m,n}\to_p 0$ as $\min(m,n)\to\infty$.  Then $\hn$ satisfies the following properties as $m,n\to\infty$:
\begin{itemize}
    \item[A.] $\edint|\hn(x)-g_0(x)|dx\to_p 0.$
    \item[B.] For any compact set $K\subset\iint(\dom(\ps_0))$,  $\hln$ satisfies 
$\sup_{x\in K}|\hln(x+\zeta_{m,n})-\ps_0(x)|\to_p 0$.
\item[C.] For all $x\in\iint(\dom(\ps_0))$ such that $x$ is continuity point of $\ps_0'$, $\hln$ satisfies
\[\hln'(x+\zeta_{m,n})\to_p\ps_0'(x).\]
\end{itemize}
\end{condition}
Condition \ref{condition: hn basic} requires the $L_1$ consistency of $\hn$, and the pointwise consistency of $\hln$ and its derivative. Our next condition involves 
 the rate of decay of the Hellinger error $\H(\hn,g_0)$. 
\begin{condition}
\label{cond: hellinger rate}
Suppose $\hn$ is an estimator of $g_0$. There exists $p\in(0,1/2]$ so that $\H(\hn, g_0)=O_p(N^{-p})$.
\end{condition}
 Proposition \ref{prop: pooled estimator satisfies Conditions} below establishes that $\hls$ satisfies Condition \ref{condition: hn basic} and  Condition \ref{cond: hellinger rate} with $p=1/4$, implying that $\H(\hls,g_0)=O_p(n^{-1/4})$.
\begin{proposition}
\label{prop: pooled estimator satisfies Conditions}
Suppose $\min(m,n)\to\infty$. Then the pooled estimator $\hls$ satisfies Condition \ref{condition: hn basic} and  Condition \ref{cond: hellinger rate} with $p=1/4$ under the setup of Theorem \ref{theorem: main theorem specific}. 
\end{proposition}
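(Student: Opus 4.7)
The plan is to reduce Proposition \ref{prop: pooled estimator satisfies Conditions} to known results about the log-concave MLE by isolating two sources of error. Let $\tilde g_{m,n}^{\mathrm{pool}}$ denote the log-concave MLE computed from the idealized (unobserved) sample $\{X_i-\mu_0\}_{i=1}^m\cup\{Y_j-\mu_0-\Delta_0\}_{j=1}^n$, which is genuinely i.i.d.\ from $g_0$; the actual $\hl$ is instead based on the pseudo-observations, which differ from the idealized ones by the additive shifts $\bar\mu-\mu_0$ and $\bar\mu+\bar\Delta-\mu_0-\Delta_0$. Using the obvious coupling and the $\sqrt N$-consistency of $\bar\mu$ and $\bar\Delta$, the $1$-Wasserstein distance between the two empirical measures is bounded by $(m/N)|\bar\mu-\mu_0|+(n/N)|\bar\mu+\bar\Delta-\mu_0-\Delta_0|=O_p(N^{-1/2})$. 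Moreover, since $\hatlambda^2=\widehat s_n^2-\widehat\sigma_n^2$ and both variance estimators are consistent for $\mathrm{Var}(g_0)$, the smoothing bandwidth satisfies $\hatlambda\to_p 0$.

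For Condition \ref{condition: hn basic}, Part A would follow by combining the classical $L_1$-consistency of the log-concave MLE for i.i.d.\ data \citep{dumbreg,2009rufi}, which handles $\tilde g_{m,n}^{\mathrm{pool}}$, with a Hellinger (hence $L_1$) continuity argument for the log-concave projection \citep{barber2020} to move from $\tilde g_{m,n}^{\mathrm{pool}}$ to $\hl$, and finally the continuity of Gaussian convolution in $L_1$ as $\hatlambda\to_p 0$. Parts B and C would be handled analogously: the log-concave MLE enjoys uniform convergence of its logarithm on compact subsets of $\iint(\dom(\psi_0))$ and pointwise convergence of its log-derivative at continuity points of $\psi_0'$ \citep{dumbreg,2009rufi}, and both properties would be inherited by $\hls$ after (i) the perturbation from pseudo-observations, via the Wasserstein control above, and (ii) smoothing with a vanishing bandwidth. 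The small random shift $\zeta_{m,n}\to_p 0$ would be absorbed using continuity of $\psi_0$ and $\psi_0'$ at the relevant points.

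For Condition \ref{cond: hellinger rate}, I would use the triangle inequality,
\[\H(\hls,g_0)\ \leq\ \H(\hls,\hl)\ +\ \H(\hl,\tilde g_{m,n}^{\mathrm{pool}})\ +\ \H(\tilde g_{m,n}^{\mathrm{pool}},g_0).\]
The third term is $O_p(N^{-2/5})$ by \cite{dossglobal}. The middle term is the one governed by the Hellinger continuity theorem of \cite{barber2020}, which delivers a bound of the form $\H\lesssim W_1^{1/2}$ for log-concave projections; together with the $O_p(N^{-1/2})$ Wasserstein perturbation above, this yields $O_p(N^{-1/4})$, and this is the term that determines the announced rate. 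The first term is the Hellinger error from Gaussian convolution of a log-concave density with bandwidth $\hatlambda$, which can be controlled using standard bounds on the Hellinger distance between a smooth log-concave density and its Gaussian mollification, combined with tail and Fisher-information control on $\hl$ inherited from log-concave MLE theory.

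The main obstacle is applying the Hellinger continuity theorem of \cite{barber2020} sharply enough to deliver the $N^{-1/4}$ rate: one must verify its moment and support hypotheses uniformly along the perturbation path and convert the Wasserstein bound into a Hellinger bound without losing additional factors. A secondary technical difficulty is bounding $\H(\hls,\hl)$ so that the smoothing step does not dominate; this should be manageable because $\hatlambda$ is a data-driven quantity with good concentration inherited from $\widehat s_n^2$ and $\widehat\sigma_n^2$, but carrying out the bound requires uniform $O_p$-tail control on $\hl$ via log-concave MLE theory.
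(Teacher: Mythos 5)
Your overall architecture matches the paper's: you compare $\hl$ to the log-concave MLE $\tilde g^{\mathrm{pool}}_{m,n}$ of the idealized sample, bound the Wasserstein distance between the two empirical measures by $O_p(N^{-1/2})$ via the $\sqrt N$-consistency of $\bmu$ and $\bDelta$, invoke \cite{barber2020}'s Hellinger-continuity of the log-concave projection to get $O_p(N^{-1/4})$ for the middle term, use \cite{dossglobal} for the idealized MLE, and then account for the smoothing step. The consistency parts (Condition \ref{condition: hn basic}) are also handled in the same spirit, via $L_1$/uniform convergence of log-concave densities and convergence of derivatives of concave functions.

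The genuine gap is the smoothing term. For Condition \ref{cond: hellinger rate} with $p=1/4$ you need a \emph{rate} for $\H(\hls,\hl)$ (equivalently, for $\hatlambda$), but you only establish $\hatlambda\to_p 0$ and defer the rest to ``concentration inherited from $\widehat s_n^2$ and $\widehat\sigma_n^2$.'' This does not work as stated: $\widehat\sigma_n^2$ is the variance of the fitted MLE $\hl$, not an empirical average, so it has no off-the-shelf concentration; its rate of convergence must itself be extracted from the Hellinger rate of $\hl$. The paper devotes a separate lemma to this, writing $\hatlambda^2=\int z^2\,d(\widehat G-\Gpool)$ (both measures are centered), splitting against $G_0$, bounding the MLE piece by Cauchy--Schwarz against $\H(\hl,g_0)$ together with the uniform convergence of exponential moments of log-concave densities, and the empirical piece by the CLT. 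A second, subtler problem is your plan to control the Gaussian-mollification error using ``Fisher-information control on $\hl$'': the log-concave MLE is piecewise log-affine, its score is a step function, and there is no standard bound making its Fisher information $O_p(1)$. The paper avoids this entirely by bounding $\H^2(\hls,g_0)$ by the total variation distance and comparing the mollified density to $g_0$ directly, so that only $\|g_0'\|_1\le\sqrt{\Io}$ (finite by assumption) enters. Without one of these two devices, your bound on the smoothing contribution does not close.
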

  The next step is to prove the asymptotic efficiency of $\bDelta-\Delta_0$. To this end,  we prove a general theorem, Theorem \ref{theorem: main theorem}, 
 which establishes the asymptotic efficiency of $\hDelta$ for any given $\hn$, $\bDelta$, and $\bmu$, provided $\hn$ satisfies Conditions \ref{condition: hn basic} and \ref{cond: hellinger rate}, and $\bDelta$ and $\bmu$ are $\sqrt{N}$-consistent estimators. The $\sqrt N$-consistency assumption on the preliminary estimators is standard in the  two-sample location shift model literature \citep{beran,park}. Since sample averages are $\sqn$-consistent, our preliminary estimators $\bmu=\bX$ and $\bDelta=\bY-\bX$ are $\sqrt N$-consistent.




\begin{theorem}\label{theorem: main theorem}
Suppose the preliminary estimators $\bmu$ and $\bDelta$ are $\sqrt N$-consistent, $\hn$ satisfies Conditions \ref{condition: hn basic} and  \ref{cond: hellinger rate}, and $m/N\to \lambda$ $\in[0, 1]$. Further suppose  $\eta_{m,n}=O(N^{-2\varrho/5})$ where $\varrho\in(0,p]$ and  $p$ is as in Condition \ref{cond: hellinger rate}. Then the one step estimator in \eqref{definition: truncated one step estimator} satisfies
\[\sqrt{\dfrac{mn}{N}}(\hDelta-\Delta_0)\to_d \N(0,\Io^{-1}),\]
where  $\Io$ is the Fisher information.
\end{theorem}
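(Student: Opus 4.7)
\textbf{Proof proposal for Theorem \ref{theorem: main theorem}.}

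My plan follows the outline already sketched at the end of Section \ref{sec: asymptotic properties}, but I must fill in the empirical-process details. The starting point is the decomposition $\hat\Delta-\bar\Delta=A-B$ given in \eqref{inoutline: initial}. Writing $\sqrt{mn/N}\,(\hat\Delta-\Delta_0)=\sqrt{mn/N}(\bar\Delta-\Delta_0)+\sqrt{mn/N}(A-B)$ and using $m/N\to\lambda\in[0,1]$, it suffices to produce an expansion in which the averages over the $X$ and $Y$ samples appear with the correct weights while everything else is $o_p(N^{-1/2})$.

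As a preliminary step I would establish two facts that underlie the whole argument. First, using Condition \ref{cond: hellinger rate} together with the $O_p(N^{-1/4})$-type continuity theory for log-concave projections of \cite{barber2020}, together with the log-concavity of $\hat g_n$ on the truncation interval $[\hat\xi_1,\hat\xi_2]$, one obtains a polynomial growth bound on $\hat\psi_n'$ over the interval; this is where the assumption $\eta_{m,n}=O(N^{-2\varrho/5})$ with $\varrho\le p$ enters, since the truncation must be slower than the Hellinger rate in order to tame the tails of $\hat\psi_n'$. Second, combining Conditions \ref{condition: hn basic}(A,C) with the growth bound and a truncated dominated-convergence argument yields $\hat{\mathcal I}_n(\eta_{m,n})\to_p \mathcal I_{g_0}$, so the factor $1/\hat{\mathcal I}_n(\eta_{m,n})$ behaves asymptotically like $1/\mathcal I_{g_0}$ and can be handled by Slutsky.

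Next I would expand $A$ in the style of \eqref{inoutline:x}. Splitting $d\mathbb F_m=d(\mathbb F_m-F_0)+dF_0$ and shifting variables by $\bar\mu$, the $dF_0$-piece becomes
\begin{equation*}
\int_{\hat\xi_1}^{\hat\xi_2}\frac{\hat\psi_n'(x)}{\hat{\mathcal I}_n(\eta_{m,n})}\,g_0(x+\mu_0-\bar\mu)\,dx,
\end{equation*}
which a Taylor expansion in $\bar\mu-\mu_0$ (using Assumption \ref{assump: L} to control the first-order term and the $\sqrt N$-consistency of $\bar\mu$ to handle the remainder) decomposes into the ``bias from estimating $g_0$'' term $\int_{\hat\xi_1}^{\hat\xi_2}\hat\psi_n'\,g_0/\hat{\mathcal I}_n(\eta_{m,n})\,dx$, the ``bias from estimating $\mathbb E[X]$'' term $\mu_0-\bar\mu$ (after integration by parts using $\int\hat\psi_n'g_0\,dx\approx 0$ and $\int (\hat\psi_n')^2 g_0\,dx\approx \mathcal I_{g_0}$), and an $o_p(N^{-1/2})$ remainder. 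The empirical-process piece $\int\hat\psi_n'/\hat{\mathcal I}_n\,d(\mathbb F_m-F_0)$ is shown to be $o_p(N^{-1/2})$ by a standard Donsker argument: the growth bound on $\hat\psi_n'$ combined with Conditions \ref{condition: hn basic}(B,C) places $\hat\psi_n'$ in a (random) uniformly bounded entropy class whose diameter in $L_2(F_0)$ tends to zero, which suffices for the empirical process indexed by this class to be $o_p(1)$. The central-limit piece is the remaining $m^{-1}\sum_i\psi_0'(X_i-\mu_0)/\mathcal I_{g_0}$.

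The analogous expansion holds for $B$, yielding the same $g_0$-bias term (since both $A$ and $B$ reduce to integrals of $\hat\psi_n'$ against $\hat g_n$-like weights on $[\hat\xi_1,\hat\xi_2]$), a ``bias from estimating $\mathbb E[Y]$'' equal to $\mu_0+\Delta_0-\bar\mu-\bar\Delta$, and the $Y$-sample CLT piece $n^{-1}\sum_j\psi_0'(Y_j-\mu_0-\Delta_0)/\mathcal I_{g_0}$. Subtracting, the $\hat g_n$-biases cancel exactly, the $\bar\mu$ terms cancel, and $\bar\Delta-\Delta_0$ combines with $\Delta_0$ to leave
\begin{equation*}
\hat\Delta-\Delta_0 \;=\; \frac{1}{m}\sum_{i=1}^m\frac{\psi_0'(X_i-\mu_0)}{\mathcal I_{g_0}}-\frac{1}{n}\sum_{j=1}^n\frac{\psi_0'(Y_j-\mu_0-\Delta_0)}{\mathcal I_{g_0}}+o_p(N^{-1/2}).
\end{equation*}
Multiplying by $\sqrt{mn/N}$ and applying the classical CLT to each (independent) average, using $\mathrm{Var}(\psi_0'(X-\mu_0))=\mathcal I_{g_0}$ and $m/N\to\lambda$, delivers the stated $\mathcal N(0,\mathcal I_{g_0}^{-1})$ limit.

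The main obstacle is the empirical-process step: controlling $\int\hat\psi_n'\,d(\mathbb F_m-F_0)$ (and its $Y$-analogue) when $\hat\psi_n'$ is itself data-dependent with only a polynomial growth bound on a random interval. Handling this cleanly requires pairing the Hellinger rate of Condition \ref{cond: hellinger rate}, the log-concavity-driven monotonicity of $\hat\psi_n'$, and the slow-decay condition on $\eta_{m,n}$ so that the relevant function class shrinks in $L_2(F_0)$-diameter at a rate fast enough to offset its entropy; the bias-cancellation between $A$ and $B$ is conceptually simple but depends on this empirical-process control being uniform in the random truncation endpoints.
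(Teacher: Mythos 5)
There is a genuine gap in the empirical-process step of your argument, and it breaks the bookkeeping that produces the CLT term.

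You decompose $A$ by writing $d\mathbb{F}_m = d(\mathbb{F}_m - F_0) + dF_0$, then claim that the piece $\int \hat\psi_n'/\hat{\mathcal I}_n\,d(\mathbb{F}_m - F_0)$ is $o_p(N^{-1/2})$ because the (random) class containing $\hat\psi_n'$ has shrinking $L_2(F_0)$-diameter, and then separately announce that the ``remaining'' central-limit piece is $m^{-1}\sum_i \psi_0'(X_i-\mu_0)/\mathcal I_{g_0}$. But these two statements are inconsistent: the CLT piece is itself an empirical-process quantity, $\int \psi_0'(\cdot-\mu_0)\,d(\mathbb{F}_m-F_0) = O_p(m^{-1/2})$ by the CLT (nondegenerate variance $\mathcal I_{g_0}$), and it is a summand of the term you just declared $o_p(N^{-1/2})$. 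In other words, $\int \hat\psi_n'\,d(\mathbb{F}_m-F_0)$ is genuinely $O_p(N^{-1/2})$ with a nontrivial Gaussian limit, not $o_p(N^{-1/2})$; if your decomposition into EP piece plus $dF_0$ piece were taken at face value, the CLT contribution would have been discarded and there would be nothing ``remaining''. The correct move — and the one the paper makes — is to further split $\hat\psi_n' = (\hat\psi_n' - \psi_0') + \psi_0'$ on the truncated interval, giving three terms $T_{1N}, T_{2N}, T_{3N}$. The term with $\hat\psi_n' - \psi_0'$ against $d(\mathbb{F}_m-F_0)$ is asymptotically negligible precisely because it is the \emph{$L_2(F_0)$-norm of the integrand} that is small (Lemma \ref{Lemma: L2 norm of hn} bounds $\|h_{m,n}\|_{P_0,2} = O_p(N^{-2p/5}(\log N)^3)$), while the term with $\psi_0'$ is the CLT term, handled by removing the truncation and invoking the classical CLT (the paper's $T_{3N}$, Step 3).

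Your heuristic — ``the class has $L_2(F_0)$-diameter tending to zero, so the empirical process indexed by it is $o_p(1)$'' — also confuses the diameter of a class with the $L_2$-norm of its elements. A class of functions tightly clustered around $\psi_0'$ has small diameter, but the empirical process at a point $\psi_0'$ of that class is not small; diameter control gives you control of the \emph{oscillation} of the process, not of its value. What the paper actually uses (Fact~\ref{fact: empirical process of m-p's}) is a bracketing maximal inequality whose bound depends on $\|\mathcal M_N(C)\|_{P_0,2}$, the supremum of the $L_2(F_0)$-\emph{norms} of class members, and the class $\mathcal M_N(C)$ is the class of differences $\hat\psi_n' - \psi_0'$, not of $\hat\psi_n'$ themselves. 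Aside from this, the rest of your outline — the polynomial growth bound on $\hat\psi_n'$ from the Hellinger rate plus slow truncation, $\hat{\mathcal I}_n(\eta_{m,n})\to_p \mathcal I_{g_0}$, the Taylor expansion of the $dF_0$ piece producing the $\hat g_n$-bias and the $\bar\mu$-bias plus $o_p(N^{-1/2})$, and the exact cancellation of the $\hat g_n$-biases between $A$ and $B$ — matches the paper's steps 2 through 4 and would go through once the empirical-process split is corrected.
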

  Note that Theorem \ref{theorem: main theorem specific} follows from  Theorem \ref{theorem: main theorem} and Proposition \ref{prop: pooled estimator satisfies Conditions}. Therefore, it suffices to prove Proposition \ref{prop: pooled estimator satisfies Conditions} and Theorem \ref{theorem: main theorem}. Their  proofs  can be found in  Sections \ref{sec: proof of the conditions} and \ref{app: proof of Theorem 1}, respectively.



\subsection{Notation}
We will require some new notation and terminologies for proving  Theorem \ref{app: proof of Theorem 1} and Proposition \ref{prop: pooled estimator satisfies Conditions}.
We list them below for convenience. 
Recall that $\etan$ denotes the truncation parameter as defined in Section~\ref{sec: truncated one step}.  We defined $\xia$ and $\xib$ to be $\tilde G_{m,n}^{-1}(\etan)$ and $\tilde G_{m,n}^{-1}(1-\etan)$, respectively. $\mathbb{F}_m$ denotes the empirical distribution of $X_1, \ldots, X_m$ and $\mathbb{H}_n$ denotes the empirical distribution function of $Y_1, \ldots, Y_n$. The  empirical process of the $X_i$'s will be denoted by $\mathbb G_m=\sqrt{m}(\Fm-F_0)$.
 The  probability measure corresponding to the joint distribution of $X_1,\ldots,X_m$ and $Y_1,\ldots,Y_n$ will be denoted by $\PP$. For a distribution function $G$, let $J(G)$ denote the set $\{x\ :\ 0<G(x)<1\}$.

 For  any function $h:\RR\mapsto\RR$, $||h||_k$ will denote the $L_k$ norm defined as
\[||h||_{k}=\lb\edint |h(x)|^k dx\rb^{1/k},\quad k\geq 1.\]
For  any signed measure $Q$ on $\RR$ and any $Q$-measurable  function $h$, 
$Q h$ will denote the integral $\int_{\RR} h dQ$ provided $h$ is integrable with respect to $Q$.
 For a  class of $Q$-measurable functions $\mathcal Z$ , we let $\|Q\|_{\mathcal Z}$ denote the supremum $\sup_{h\in\mathcal Z}\abs{Qh}$. For a measure $P$ on $\RR$, we define the $L_{P,k}$ norm of the function $h$  as
\[\|h\|_{P,k}=\lb\edint |h(x)|^k dP(x)\rb^{1/k},\quad k\geq 1.\]
For any class of $P$-measurable functions $\mathcal Z$, we will denote by $\|\mathcal Z\|_{P,k}$ the supremum  $\sup_{h\in\mathcal Z}\|h\|_{\PP,k}$.
 


For two distribution functions $F_1$ and $F_2$ with respective densities $f_1$ and $f_2$, the total variation distance between $F_1$ and $F_2$ is given by
$d_{TV}(F_1,F_2)=\|f_1-f_2\|_1/2$.
Following   \cite{villani2003}, page $75$, we define the Wasserstein distance between two measures  $\nu$ and $\nu'$ on $\RR$ as:
\begin{equation}\label{def: Wasserstein distance}
d_W(\nu,\nu')=\edint|F(x)-G(x)|dx,
\end{equation}
where $F$ and $G$ are the cumulative distribution functions corresponding to  $\nu$ and $\nu'$ respectively. By an abuse of notation, we will refer to the above distance by $d_W(F,G)$ in some instances.
  Suppose $\e>0$. For any function class $\mathcal Z$ and a norm $|\cdot|$, the bracketing entropy $N_{[\ ]}(\epsilon, \mathcal Z,| \cdot|)$ is as in Definition 2.1.6 on page 83 of \cite{wc}. The covering number $N(\epsilon, \mathcal Z,| \cdot|)$ is as in page 83 of \cite{wc}.


The Cartesian product of two sets $A$ and $B$ is denoted by $A\times B$. 
The closure of the set $A$ will be denoted by $\overline{A}$. For any function $h$, the indicator function of the event $h(x)\leq C$ is denoted as $1_{[h(x)\leq C]}$. For any set $A$, $1_A(x)$ is the indicator function of the event $x\in A$. The standard Gaussian density is denoted by $\varphi$.

In what follows, we use the following fact repeatedly, which also implies that a log-concave density is bounded above with finite moments. 
 \begin{fact}[Lemma 1 of \cite{theory}]
 \label{fact: Lemma 1 of theory paper}
 Let $f$ be a univariate log-concave density. Then all moments of $f$ exist. Moreover,  there exist constants $\alpha>0$ and $b\in\RR$ so that $f(x)\leq e^{-\alpha |x|+b} $ for all $x\in\RR$.
 \end{fact}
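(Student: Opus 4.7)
The plan is to establish the exponential tail bound $f(x) \leq e^{-\alpha|x|+b}$ first; the moment claim then follows at once because $\edint |x|^k e^{-\alpha|x|+b}\,dx < \infty$ for every integer $k \geq 0$ and every $\alpha > 0$.

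Write $\phi = \log f$, viewed as a proper concave function on $\RR$ (with $\phi = -\infty$ outside $\dom(\phi) = \{f > 0\}$). Since $\edint f = 1$, $\dom(\phi)$ has positive Lebesgue measure and therefore $\iint(\dom(\phi))$ is nonempty; fix some $x_0$ in it. The central step is to produce linear upper bounds on $\phi$ on each tail using the classical slope-monotonicity property of concave functions (e.g., Theorem 24.1 of Rockafellar): if $a < b < c$ all lie in $\dom(\phi)$, then the chord slopes satisfy $(\phi(b) - \phi(a))/(b-a) \geq (\phi(c) - \phi(b))/(c-b)$.

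On the right tail, the first task is to locate some $x_1 > x_0$ with $\phi(x_1) < \phi(x_0)$. If $\dom(\phi) \cap [x_0,\infty)$ is unbounded, such an $x_1$ must exist: otherwise $\phi$ would be non-decreasing on the whole ray, giving $f(x) \geq f(x_0) > 0$ for all $x \geq x_0$, which contradicts $\edint f = 1$. Setting $\alpha_+ := (\phi(x_0) - \phi(x_1))/(x_1 - x_0) > 0$, slope monotonicity yields $\phi(x) \leq \phi(x_1) - \alpha_+(x - x_1)$ for every $x \geq x_1$. In the complementary case, $\dom(\phi) \cap [x_0,\infty)$ is bounded, $f$ is compactly supported on the right, and any $\alpha > 0$ with a sufficiently large $b$ trivially dominates $f$ there. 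A symmetric argument on the left side of $x_0$ furnishes some $x_{-1} < x_0$ and slope $\alpha_- > 0$ with $\phi(x) \leq \phi(x_{-1}) + \alpha_-(x - x_{-1})$ for $x \leq x_{-1}$.

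On the bounded middle interval $[x_{-1}, x_1]$, $\phi$ is upper semicontinuous and proper, so it attains its supremum there at some finite value $M$. Setting $\alpha = \min(\alpha_-, \alpha_+)$ and absorbing $M$, the values $\phi(x_{\pm 1})$, and the horizontal shifts into a single constant $b \in \RR$ gives $\phi(x) \leq -\alpha|x| + b$ on all of $\RR$, whence $f(x) \leq e^{-\alpha|x| + b}$; integrating against $|x|^k$ then delivers finiteness of every moment. The only point requiring care is the case analysis on whether $\dom(\phi)$ is bounded or unbounded on each side; once one sees that the bounded case is trivial and the unbounded case forces $\phi$ to strictly decrease somewhere, the remainder is a straightforward bookkeeping exercise with the chord-slope inequality.
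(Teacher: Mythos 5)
Your argument is correct. Note, however, that the paper does not prove this statement at all: it is imported verbatim as Lemma 1 of the cited reference (where it is stated and proved for multivariate log-concave densities, with $e^{-a\|x\|+b}$, via convexity arguments of the same flavour), so your self-contained univariate proof is a genuinely different route from the paper's "proof by citation." Your chain of reasoning checks out: the interior of $\mathrm{dom}(\log f)$ is nonempty because a one-dimensional convex set of positive Lebesgue measure is a nondegenerate interval; on an unbounded side the existence of a point with strictly smaller value of $\phi=\log f$ is forced by integrability (otherwise $f\geq f(x_0)>0$ on a ray); and the three-point chord-slope inequality then gives the linear decay beyond that point. The only places you compress are (i) the bounded-support side, where "trivial" tacitly uses that $\phi$ is bounded above on the bounded piece of its domain — this follows either from upper semicontinuity and properness on the compact closure (the same facts you invoke for the middle interval) or from one more application of the chord-slope bound using a domain point to the left of $x_0$ — and (ii) the final absorption of the linear bounds into a single $-\alpha|x|+b$ when $x_1$ or $x_{-1}$ has the "wrong" sign, which is indeed just finite bookkeeping over bounded intervals. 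Neither is a gap. What your approach buys is a short, elementary, dimension-one proof that makes the appendix self-contained; what the citation buys the paper is the stronger multivariate statement and one fewer page of routine convexity manipulations.
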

Most of the additional facts  used in the proofs are collected in Section \ref{sec: technical facts}.

\section{Proof of Proposition \ref{prop: pooled estimator satisfies Conditions}}
\label{sec: proof of the conditions}
Subsequent discussion in this section presupposes that the preliminary estimators are as  outlined in Section \ref{sec: preliminary estimators}, specifically, $\hn = \hat g_{m, n}^{\text{pool,sm}}$, $\bmu = \bX$, and $\bDelta = \bY - \bX$.
 We will prove Proposition \ref{prop: pooled estimator satisfies Conditions} in four steps. In the first step, we will prove that $\hl$ satisfies $\|\hl-g_0\|_1\as 0$. In the second step, we will show that $\hl$ satisfies Condition  \ref{cond: hellinger rate} with $p=1/4$. In the third step, we analyze the Hellinger error decay of $\hls$. In the fourth step, building on the above steps, we will show that $\hls$  satisfies Conditions \ref{condition: hn basic} and \ref{cond: hellinger rate}, thus finishing the proof of Proposition \ref{prop: pooled estimator satisfies Conditions}.
 
\subsection{Step 1: proving $\|\hl-g_0\|_1\as 0$}
\label{sec: prop: step 1}

In order to establish the convergence $\|\hl-g_0\|_1\as 0$, we will employ the projection theory framework developed by \cite{dumbreg}. Let us denote the empirical distribution function of the combined sample comprising the $X_i-\bmu$'s and the $Y_j-\bmu-\bDelta$'s as $\Fp$. Recall that  $\hl$ is the log-concave MLE based on this combined sample. As per the insights of \cite{dumbreg}, $\Gl$, the distribution function of $\hl$, is the projection of $\Fp$ onto the class  of the distribution functions with logconcave densities. This projection is with respect to the Kullback-Leibler (KL) divergence; for an in-depth exposition, please refer to \cite{dumbreg}. This projection operator adheres to a continuity property concerning Wasserstein distance. Specifically, we can draw upon Theorem 2.15 of \cite{dumbreg} to deduce that, for any sequence of distributions $F_n$, the convergence $d_W(F_n, G_0)\as 0$ implies the convergence $d_{TV}(F_n,G_0)\as 0$, provided that $G_0$ is non-degenerate with finite first moment. Here   $d_W$ denotes the Wasserstein distance and $d_{TV}$ denotes the total variation distance. In our case, the non-degeneracy of $G_0$ is immediate because it is a distribution function with continuous density. Also, all moments of $G_0$ is finite by  Fact \ref{fact: Lemma 1 of theory paper}. Consequently, to establish the convergence $|\hl-g_0|_1\as 0$, it is sufficient to show that $d_W(\Fp, G_0)\as 0$.


 To show the convergence $d_W(\Fp, G_0)\as 0$, it is enough to show that \citep[see Theorem 6.9 of][]{villani2009}  $\Fp$ weakly converges to  $G_0$ almost surely and 
 \begin{equation}
     \label{convergence: L1 of Fp and G0}
     \edint |x|d\Fp(x)\as \edint |x|dG_0(x).
 \end{equation}
 We will show the weak convergence first. To that end, we show a stronger result  which implies weak convergence. In particular, we show that $\Fp$ uniformly converges to $G_0$  with probability one. 
 Note that for any $x\in\RR$,
 \begin{align}
 \label{def: pooled empirical distribution}
     \Fp(x)=\frac{m\Fm(x+\bmu)+n\mathbb H_n(x+\bmu+\bDelta)}{m+n},
 \end{align}
 which implies for any $x\in\RR$,
 \[\abs{\Fp(x)-G_0(x)}\leq \abs{\Fm(x+\bmu)-G_0(x)}+\abs{\mathbb H_n(x+\bmu+\bDelta)-G_0(x)}.\]
Hence to show  $\Fp$  converges uniformly to $G_0$,  it suffices to  show that with probability one,  $\Fm(\cdot +\bmu)$ and $\mathbb H_n(\cdot+\bmu+\bDelta)$ uniformly converges 
to $G_0$ with probability one. To prove $\Fm(\cdot+\bmu)$ uniformly converges to $G_0$ almost surely, note that 
\begin{align}
\label{inprop: step 1: weak}
  \MoveEqLeft  \sup_{x\in\RR}\abs{\Fm(x+\bmu)-G_0(x)}\leq   \sup_{x\in\RR}\abs{\Fm(x+\bmu)-F_0(x+\bmu)}\nn\\
    &\ +\sup_{x\in\RR}\abs{F_0(x+\bmu)-G_0(x)},
\end{align}
whose first term converges to zero almost surely by the Glivenko-Cantelli Theorem because $F_0$ is continuous  \citep[cf. Theorem 2.1 of][]{vdv}. For the second term, note that  
\[\sup_{x\in\RR}|F_0(x+\bmu)-F_0(x+\mu_0)|\leq |\bmu-\mu_0|\sup_{x\in\RR}f_0(x)\]
which converges to zero almost surely because (a) $\sup_{x\in\RR}f_0(x)<\infty$ by Fact \ref{fact: Lemma 1 of theory paper} and (b) $|\bmu-\kmu|\as 0$ since $\bmu=\bX$ is a strongly consistent estimator of $\mu_0$. Since   $G_0(x)=F_0(x+\mu_0)$, it follows that $\sup_{x\in\RR}\abs{F_0(x+\bmu)-G_0(x)}\as 0$. Therefore, \eqref{inprop: step 1: weak} implies that $\sup_{x\in\RR}\abs{\Fm(x+\bmu)-G_0(x)}\as 0$.
Similarly, we can prove that with probability one, $\mathbb H_n(\cdot +\bmu+\bDelta)$ uniformly converges to $G_0$ almost surely. Hence the almost sure weak convergence of $\Fp$  to  $G_0$ follows from \eqref{def: pooled empirical distribution}.

To show $d_W(\Fp, G_0)\as 0$, it remains to prove \eqref{convergence: L1 of Fp and G0}. To show \eqref{convergence: L1 of Fp and G0}, we  first note that \eqref{def: pooled empirical distribution} implies
\begin{align}\label{inlemma: Fp wasserstein}
   \MoveEqLeft \edint \abs{x}d(\Fp(x)-G_0(x))dx= \frac{m}{m+n}\edint \abs{x}d(\Fm(x+\bmu)-G_0(x))dx\nn\\
    &\ +\frac{n}{m+n}\edint \abs{x}d(\mathbb H_n(x+\bmu+\overline\Delta_n)-G_0(x))dx.
\end{align}
Since we assume $m/(m+n), n/(m+n)\leq 1$, it is enough to show that 
\begin{gather}
    \edint \abs{x}d(\Fm(x+\bmu)-G_0(x))dx\as 0\quad\\
    \edint \abs{x}d(\mathbb H_n(x+\bmu+\overline\Delta_n)-G_0(x))dx\as 0.
\end{gather}
 We will prove the first convergence only because the second convergence will hold similarly. 
 To that end, observe that
 \begin{align*}
\MoveEqLeft     \abs{\edint |x|d\Fm(x+\bmu)-\edint |x|dG_0(x)}\\
     =&\ \abs{\edint |x-\bmu|d\Fm(x)-\edint |x-\kmu|dF_0(x)}\\
     =&\ \abs{\edint \slb |x-\bmu|-|x-\kmu|\srb d\Fm(x)+\edint |x-\kmu|d(\Fm-F_0)(x)}\\
     \leq &\ \abs{\edint \slb |x-\bmu|-|x-\kmu|\srb d\Fm(x)}+\abs{\edint |x-\kmu|d(\Fm-F_0)(x)}\\
     \leq & \edint \abs{ |x-\bmu|-|x-\kmu|} d\Fm(x)+\abs{\edint |x-\kmu|d(\Fm-F_0)(x)}\\
     \stackrel{(a)}{\leq }&\ \edint \abs{ \bmu-\kmu} d\Fm(x)+\abs{\edint |x-\kmu|d(\Fm-F_0)(x)}\\
     =&\ \abs{\bmu-\kmu}+\abs{\edint |x-\kmu|d(\Fm-F_0)(x)}
 \end{align*}
 where (a) follows from the triangle inequality. 
Since we take $\bmu=\bX$, by the strong law of large numbers, 
 $\bmu\as \kmu$. The strong law also implies that \[\edint |x-\kmu|d(\Fm-F_0)(x)\as 0.\]
 Therefore
 \[  \abs{\edint |x|d\Fm(x+\bmu)-\edint |x|dG_0(x)}\as 0.\]
 We can similarly show that
 \[\edint \abs{x}d\mathbb H_n(x+\bmu+\overline\Delta_n)-\edint \abs{x}dG_0(x)\as 0.\]
 Thus \eqref{inlemma: Fp wasserstein} is proved. Hence, we have shown that $d_W(\Gl,G_0)\as 0$, which completes the proof of $\|\hl-g_0\|_1\as 0$.

 
'

\subsection{Step 2: proving that $\hl$ satisfies Condition \ref{cond: hellinger rate} with $p=1/4$}
Suppose $Z_1,\ldots,Z_N$ is the pooled sample of the $X_i-\mu_0$'s and the $Y_j-\mu_0-\Delta_0$'s, that is $Z_i=X_i-\mu_0$ for $i=1,\ldots,m$ and $Z_i=Y_i-\mu_0-\Delta_0$ for $i=m+1,\ldots,N$. Note that $Z_1,\ldots,Z_N\iid G_0$.
Let us denote the empirical distribution function of the $Z_i$'s by $\Gp$.
Therefore
\[\Gp(x)=\frac{m\Fm(x+\mu_0)+n\Hn(x+\kmu+\kdelta)}{m+n}\quad\text{for all }x\in\RR.\]
Denote by $\hlk$ the log-concave MLE based on the sample $\{Z_1,\ldots,Z_N\}$.  

Using triangle inequality on Hellinger distance, $\H(\hl,g_0)\leq \H(\hl,\hlk)+\H(\hlk,g_0)$.  Since $\hlk$ is the log-concave MLE based on an i.i.d. sample with log-concave density $g_0$, the rate of decay of $\H(\hlk,g_0)$ can be obtained using existing results on log-concave MLEs. In particular, \cite{dossglobal} implies that  $\H(\hlk,g_0)=O_p(N^{-2/5})$. Hence, the proof will follow if we can show that $\H(\hl,\hlk)=O_p(N^{-1/4})$. To this end, we will use Theorem 2 of \cite{barber2020}.

Note that $\hlk$ is the density of the log-concave projection of $\Gp$ in the sense of \cite{dumbreg}, as explained in Section \ref{sec: prop: step 1}.
In contrast, $\hl$ is the density of the log-concave projection of $\Gpool$ defined in \eqref{def: pooled empirical distribution}. Theorem 2 of \cite{barber2020} bounds $ \H(\hl,\hlk)$ in terms of the Wasserstein distance between $\Gp$ and $\Gpool$. This theorem states that if $\Gp$ and $\Gpool$ are non-degenerate distributions with finite first moment, then there exists an absolute constant $C>0$ so that 
\begin{equation*}
    \H(\hl,\hlk)\leq C \left(\frac{d_W(\Gp,\Gpool)}{\epsilon_{\Gp}}\right)^{1/2}
\end{equation*}
where  for any distribution function $F$, $\epsilon_{F}$ is defined as $\e_F=E_F\left[\abs{U-E_F[U]}\right]$, with $U$ being a random variable with  distribution function $F$.

To show $  \H(\hl,\hlk)=O_p(N^{-1/4})$, it suffices to show that $\e_{\Gp}$ is bounded away from zero with probability  one and that $d_W(\Gp,\Gpool)=O_p(N^{-1/2})$. We will first show that $\e_{\Gp}$ is bounded away from zero. To this end, note that by triangle inequality, 
\[\abs{\e_F-E_F[|U|]}=\abs{E_F[\abs{U-E_F[U]}]-E_F[|U|]}\leq |E_F(U)|.\]
Thus $\e_F\geq \E_F[|U|]-|E_F[U]|$. Hence, 
\[\e_{\Gp}\geq \E_{\Gp}[|U|]-|E_{\Gp}[U]|.\]
Note that
\[\E_{\Gp}[|U|]=\edint |x|d\Gp(x)=\frac{\sum_{i=1}^{N}|Z_i|}{N}\as \edint |x|g_0(x)dx\]
by the strong law of large numbers because $Z_1,\ldots,Z_N$ are i.i.d. with common density $g_0$, which has finite first moment because of log-concavity (see Fact \ref{fact: Lemma 1 of theory paper}). The integral $\int |x| g_0(x)dx =0$ only if $g_0$ is identically $0$ almost everywhere. However, in that case $g_0$ can not be an absolutely continuous density function, which is required for the Fisher information to be finite (see the discussion preceeding \eqref{eq: finitieness of FI}). Therefore, $\int |x| g_0(x)dx >0$. On the other hand,
\[E_{\Gp}[U]=\frac{\sum_{i=1}^{N}Z_i}{N}\as \edint xg_0(x)dx,\]
by another application of the strong law of large numbers.  Now $g_0$ is centered because it is a member of $\mP_0$ defined in \eqref{def: mP 0}, which indicates that $\int xg_0(x)dx=0$. Thus, we have shown that 
\[\e_{\Gp}\geq \E_{\Gp}[|U|]-|E_{\Gp}[U]|\as \edint |x|g_0(x)dx>0\]
which establishes that $\e_{\Gp}$ is bounded away from zero with probability one. Thus it only remains to show that $d_W(\Gp,\Gpool)=O_p(N^{-1/2})$.

To show  $d_W(\Gp,\Gpool)=O_p(N^{-1/2})$, we will use an alternative definition of $d_W$ which is due to the Kantorovich-Rubinstein Duality Theorem
 \citep[cf. theorem 2.5 of][]{bobkovbig}. By this theorem the Wasserstein distance between two distribution functions $F_1$ and $F_2$ is
 \[d_W(F_1,F_2)=\sup_{h\in\text{Lip}_1}\bl\edint h(x)d(F_1-F_2) \bl,\]
 where $\text{Lip}_1$ is the class of all real-valued functions with Lipschitz constant one. Therefore,
 \[d_W(\Gp,\Gpool)=\sup_{h\in\text{Lip}_1}\bl\edint h(x)d(\Gp(x)-\Gpool(x)) \bl.\]
 Note that
 \begin{align*}
   \edint h(x)d\Gp(x)=&\ \frac{m\edint h(x)d\Fm(x+\mu_0)+n\edint h(x)d\Hn(x+\mu_0+\Delta_0)}{m+n} \\
   =&\ \frac{m\edint h(x-\kmu)d\Fm(x)+n\edint h(x-\kmu-\kdelta)d\Hn(x)}{m+n}.
 \end{align*}
 Similarly, we can show that 
 \begin{align*}
   \edint h(x)d\Gpool(x)= \frac{m\edint h(x-\bmu)d\Fm(x)+n\edint h(x-\bmu-\bDelta)d\Hn(x)}{m+n}.
 \end{align*}
Therefore,
\begin{align*}
 \MoveEqLeft \bl \edint h(x)d\Gpool(x) -   \edint h(x)d\Gp(x)\bl \leq  \frac{m}{m+n} \bl\edint \slb h(x-\kmu)-h(x-\bmu)\srb d\Fm(x)\bl\\
  &\ + \frac{n}{m+n} \bl\edint \slb h(x-\kmu-\kdelta)-h(x-\bmu-\bDelta)\srb d\Hn(x)\bl.
\end{align*}
 If $h$ is Lipschitz with Lipschitz constant one, then it follows that $|h(x-\kmu)-h(x-\bmu)|\leq |\bmu-\kmu|$. Thus it follows that
 \begin{align*}
 \MoveEqLeft \sup_{h\in\text{Lip}_1}\bl \edint h(x)d\Gpool(x) -   \edint h(x)d\Gp(x)\bl \leq  \frac{m}{m+n}|\bmu-\kmu|\\
 &\ +\frac{n}{m+n}|\bmu+\bDelta-\kmu-\kdelta|
\end{align*}
which is $O_p(N^{-1/2})$ by central limit theorem because $\bmu=\bX$ and $\bmu+\bDelta=\bY$. Hence, the proof of this step follows.

 \subsection{Step 3:  Hellinger error decomposition of $\hls$:}
 \label{lemmas on hellinger error}
 First, we will show that $ \H(\hls, g_0) \lesssim \sqrt{2} \H(\hl, g_0) + \hatlambda$. Since Step 2 gives $\H(\hl, g_0)=O_p(N^{-1/4})$, it will only remain to show  that  $\hatlambda=O_p(N^{-1/4})$. 
 \begin{lemma}
 \label{lemma: prop: hellinger decomposition}
    Under the setup of Theorem \ref{theorem: main theorem specific}, the density estimator $\hls$ defined in \eqref{def: smoothed pooled estimator} satisfies 
 \begin{equation*}
     \H(\hls, g_0) \lesssim \sqrt{2} \H(\hl, g_0) + \hatlambda.
 \end{equation*}
 \end{lemma}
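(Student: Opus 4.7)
The plan is to introduce the auxiliary density $g_0 * \varphi_{\hatlambda}$---the Gaussian-smoothed version of the true density, where $\varphi_{\hatlambda}(x) = \hatlambda^{-1}\varphi(x/\hatlambda)$---as an intermediary between $\hls$ and $g_0$. Since $\hls = \hl * \varphi_{\hatlambda}$, this decouples the ``estimation error'' (comparing $\hl$ and $g_0$ after both are smoothed by the same kernel) from the ``smoothing bias'' (comparing $g_0$ with its own smoothing). Concretely, I would apply the pointwise inequality $(a-b)^2 \leq 2(a-c)^2 + 2(c-b)^2$ to $a = \sqrt{\hls(x)}$, $b = \sqrt{g_0(x)}$, and $c = \sqrt{(g_0 * \varphi_{\hatlambda})(x)}$. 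Integrating and using the definition $\H^2(f_1,f_2) = \tfrac12\int(\sqrt{f_1}-\sqrt{f_2})^2 dx$ yields
\[
\H^2(\hls, g_0) \leq 2\,\H^2(\hls, g_0 * \varphi_{\hatlambda}) + 2\,\H^2(g_0 * \varphi_{\hatlambda}, g_0),
\]
and taking square roots together with subadditivity of $\sqrt{\cdot}$ produces the $\sqrt{2}$ appearing in the lemma.

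For the first term on the right-hand side, since both $\hls$ and $g_0 * \varphi_{\hatlambda}$ are obtained from $\hl$ and $g_0$ by applying the same Markov kernel (convolution with $\varphi_{\hatlambda}$), the data-processing inequality for the squared Hellinger distance---an immediate consequence of joint convexity of $\H^2$---gives $\H(\hls, g_0 * \varphi_{\hatlambda}) \leq \H(\hl, g_0)$. For the second term, I would write $g_0 * \varphi_{\hatlambda}(x) = \int g_0(x - \hatlambda z)\varphi(z)\,dz$ and use joint convexity of $\H^2$ in the mixture variable $z$ to reduce to the pure shift-Hellinger distance $\H^2(g_0(\cdot - h), g_0)$ with $h = \hatlambda z$. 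Since $g_0 \in \mP_0$ is absolutely continuous with $\Io < \infty$, the identity $\sqrt{g_0(x)} - \sqrt{g_0(x-h)} = \int_{x-h}^{x}g_0'(u)/(2\sqrt{g_0(u)})\,du$ together with Cauchy-Schwarz and Fubini gives the Fisher-information shift bound $\H^2(g_0(\cdot - h), g_0) \leq h^2 \Io/8$. Averaging over $z \sim \varphi$ then produces $\H^2(g_0 * \varphi_{\hatlambda}, g_0) \leq \hatlambda^2 \Io/8$, so $\H(g_0 * \varphi_{\hatlambda}, g_0) \lesssim \hatlambda$, and the lemma follows from the displayed triangle-type inequality.

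The main obstacle is obtaining the sharp linear-in-$\hatlambda$ rate for the smoothing bias. A cruder approach via $\H^2 \leq \tfrac12\|\cdot\|_1$ combined with a total-variation bound $\|g_0 * \varphi_{\hatlambda} - g_0\|_1 \lesssim \hatlambda$ (which holds for the unimodal density $g_0$ since log-concave densities have bounded variation) would only yield $\H(g_0 * \varphi_{\hatlambda}, g_0) \lesssim \sqrt{\hatlambda}$, which is too weak and would destroy the $N^{-1/4}$ Hellinger rate needed downstream for Condition~\ref{cond: hellinger rate}. The linear rate genuinely requires the quadratic-mean-differentiability encoded in $\Io < \infty$, combined with the joint-convexity trick that converts the Gaussian convolution into an average of pure shifts, each satisfying the Fisher-information shift bound.
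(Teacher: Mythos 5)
Your proof is correct, and it takes a genuinely different route from the paper's. The paper bounds $\H^2(\hls, g_0)$ by $d_{TV}(\hls, g_0) = \tfrac12\|\hls - g_0\|_1$ and then decomposes the $L_1$ distance via the intermediary $g_0 * \varphi_{\hatlambda}$, concluding with the Cauchy--Schwarz bound $\int|g_0'| \le \sqrt{\Io}$. As written, that chain only establishes $\H^2(\hls, g_0) \lesssim \H(\hl, g_0) + \hatlambda$, which upon taking square roots gives a $\sqrt{\H(\hl,g_0)} + \sqrt{\hatlambda}$ bound on $\H(\hls,g_0)$ rather than the linear one the lemma states. With $\H(\hl,g_0)$ and $\hatlambda$ both $O_p(N^{-1/4})$, that would degrade the downstream Hellinger rate to $O_p(N^{-1/8})$, whereas the paper uses $O_p(N^{-1/4})$ to verify Condition~\ref{cond: hellinger rate} with $p=1/4$. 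Your argument---the genuine triangle inequality in $\H$ through $g_0*\varphi_{\hatlambda}$, the data-processing inequality $\H(\hl*\varphi_{\hatlambda}, g_0*\varphi_{\hatlambda}) \le \H(\hl, g_0)$, and the joint-convexity reduction of $\H^2(g_0*\varphi_{\hatlambda}, g_0)$ to the Fisher-information shift bound $\H^2(g_0(\cdot-h),g_0) \le h^2\Io/8$---delivers the linear-in-$\hatlambda$ rate and hence proves the lemma as stated (indeed slightly more, since the $\sqrt{2}$ in the first term is unnecessary). You have also correctly diagnosed why the cruder $\H^2 \le d_{TV}$ route loses a square root: this is precisely what happens in the paper's proof. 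The shift bound needs absolute continuity of $\sqrt{g_0}$, which is guaranteed by $\Io < \infty$, so the chain is complete.
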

 \begin{proof}
 Fact \ref{fact: dTV and hellinger} implies $ \H^2(\hls, g_0)  \le  d_{TV}(\hls, g_0)$, which is $1/2$ times
 \begin{equation*}
 \begin{split}
   \lVert \hls - g_0 \rVert_1 
    & \leq \hatlambda^{-1}\edint \abs{\edint \slb\hl(x-t)-g_0(x-t)\srb\varphi(t/\hatlambda)dt} dx\\
    & \quad + \hatlambda^{-1}\edint \abs{ \edint \slb g_0(x)-g_0(x-t)\srb \varphi(t/\hatlambda)dt} dx\\
    & \leq \|\hl-g_0\|_1+\hatlambda^{-1}\edint \abs{\edint \varphi(t/\hatlambda)\dint_{x-t}^x g_0'(z)dzdt }dx,
 \end{split}
 \end{equation*}
 whose first term is bounded by $ \sqrt{2}\mathcal{H}(\hl, g_0)$ by Fact \ref{fact: dTV and hellinger}, and the second term can be bounded as 
 \begin{align*}
     \MoveEqLeft  \hatlambda^{-1}\edint \abs{\edint \varphi(t/\hatlambda)\dint_{x-t}^x g_0'(z)dzdt }dx\\
   \leq &\ \hatlambda^{-1}\edint\lb \dint_{-\infty}^0 \dint_{x}^{x-t}  \varphi(t/\hatlambda)\abs{g_0'(z)}dzdt +\dint_{0}^\infty \dint_{x-t}^{x}  \varphi(t/\hatlambda)\abs{g_0'(z)}dzdt\rb dx\\
   = &\ \hatlambda^{-1}\edint\lb \dint_{0}^\infty \dint_{x}^{x+t}  \varphi(-t/\hatlambda)\abs{g_0'(z)}dzdt +\dint_{0}^\infty \dint_{x-t}^{x}  \varphi(t/\hatlambda)\abs{g_0'(z)}dzdt\rb dx\\
   \leq &\ \hatlambda^{-1}\edint \dint_{-\infty}^\infty \dint_{x-|t|}^{x+|t|}  \varphi(t/\hatlambda)\abs{g_0'(z)}dzdtdx\\
   =&\ \hatlambda^{-1}\edint \varphi(t/\hatlambda)\dint_{-\infty}^\infty \dint_{x-|t|}^{x+|t|}  \abs{g_0'(z)}dzdxdt\\
   =&\ 2\edint |t| \hatlambda^{-1}\varphi(t/\hatlambda)\edint |g_0'(z)|dz dt\\
   =&\ 4\hatlambda \E[|\ZZ|]\cdot\edint |g_0'(z)|dz,
 \end{align*}
 where $\ZZ\sim \N(0, 1)$. The proof follows noting  $\E[|\ZZ|]<\infty$ and  $  \dint |g_0'(z)| dz<\infty$ because 
 \begin{equation*}
  \dint |g_0'(z)| dz=     \edint |\psi_0'(z)| g_0(z) dz\le \left( \edint \psi_0'(z)^2g_0(z)dz \right)^{1/2} \left( \edint g_0(z)dz \right)^{1/2}= \sqrt{\mathcal{I}_{g_0}},
  \end{equation*}
 which is finite since $g_0\in\mP_0$.

 \end{proof}
 


Next we will find the rate of decay of the smoothing parameter $\hatlambda$ defined in \eqref{def: smoothing parameter}. 

\begin{lemma}
\label{lemma: prop: rate of hatlambda}
  Under the assumptions of Theorem \ref{theorem: main theorem specific}, the $\hatlambda$ defined in \eqref{def: smoothing parameter} satisfies $\hatlambda=O_p(N^{-1/4})$.  
\end{lemma}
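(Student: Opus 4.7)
The plan is to decompose
\[ \hatlambda^2 \;=\; \widehat s_n^2 - \widehat\sigma_n^2 \;=\; (\widehat s_n^2 - \sigma_0^2) + (\sigma_0^2 - \widehat\sigma_n^2), \]
where $\sigma_0^2 := \int z^2 g_0(z)\, dz$ is finite by Fact~\ref{fact: Lemma 1 of theory paper}, and to show that each piece is $O_p(N^{-1/2})$; this will give $\hatlambda^2 = O_p(N^{-1/2})$ and hence $\hatlambda = O_p(N^{-1/4})$. Note that, by the standard first-moment-matching property of the log-concave MLE \citep{2009rufi}, $\int z\, d\hl = \int z\, d\Fp = 0$, so $\widehat\sigma_n^2 = \int z^2\, d\hl$.

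The first piece is routine: expanding the pseudo-observations as $X_i - \bmu = (X_i - \mu_0) - (\bX - \mu_0)$ and $Y_j - \bmu - \bDelta = (Y_j - \mu_0 - \Delta_0) - (\bY - \mu_0 - \Delta_0)$, using the $\sqrt{N}$-consistency of $\bmu$ and $\bDelta$, and applying the classical CLT for sample variance to the i.i.d.\ draws $X_i - \mu_0$ and $Y_j - \mu_0 - \Delta_0 \sim g_0$ (whose fourth moment is finite by Fact~\ref{fact: Lemma 1 of theory paper}) gives $\widehat s_n^2 - \sigma_0^2 = O_p(N^{-1/2})$.

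For the harder piece $\sigma_0^2 - \widehat\sigma_n^2 = \int z^2 (g_0 - \hl)\, dz$, I would combine: (i)~the $L_1$ consistency $\|\hl - g_0\|_1 = o_p(1)$ from Step~1; (ii)~the Hellinger rate $\H(\hl, g_0) = O_p(N^{-1/4})$ from Step~2; (iii)~the Wasserstein rate $d_W(\Fp, G_0) = O_p(N^{-1/2})$, which follows from the triangle inequality together with the bound $d_W(\Fp, \Gp) = O_p(N^{-1/2})$ established in Step~2 and the standard empirical Wasserstein rate $d_W(\Gp, G_0) = O_p(N^{-1/2})$; and (iv)~the uniform exponential tail decay shared by $\hl$ and $g_0$ (Fact~\ref{fact: Lemma 1 of theory paper}). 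Truncating $z^2$ at a threshold $M \asymp \log N$, the truncated integral is controlled via a Lipschitz argument combined with the Wasserstein continuity of the log-concave projection (Theorem~2.15 of \cite{dumbreg}), while the tails are absorbed by the uniform exponential decay. This yields $|\sigma_0^2 - \widehat\sigma_n^2| = O_p(N^{-1/2})$, with logarithmic factors absorbed into constants.

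The main obstacle is the sharp $O_p(N^{-1/2})$ rate for $|\sigma_0^2 - \widehat\sigma_n^2|$: a direct Cauchy--Schwarz applied to the Hellinger distance only yields $O_p(\H(\hl,g_0)) = O_p(N^{-1/4})$, which would give the weaker conclusion $\hatlambda = O_p(N^{-1/8})$. Upgrading to the required rate must leverage the \emph{exact} first-moment matching of the log-concave MLE at $\Fp$ together with the Wasserstein-level proximity of $\Fp$ to $G_0$, which together convert the moment convergence of $\hl$ from the slow Hellinger rate to the parametric CLT rate---this is the key technical subtlety.
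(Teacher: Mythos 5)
Your decomposition is a recentered version of the paper's: after noting $\int z\,d\Fp=0=\int z\,\hl(z)\,dz$, the paper bounds $\hatlambda^2\le T_1+T_2$ with $T_1=\lvert\int z^2\,\hl(z)\,dz-\int z^2 g_0(z)\,dz\rvert$ and $T_2=\lvert\int z^2\,d\Fp(z)-\int z^2 g_0(z)\,dz\rvert$, which are exactly your $\lvert\sigma_0^2-\widehat\sigma_n^2\rvert$ and $\lvert\widehat s_n^2-\sigma_0^2\rvert$. Your CLT treatment of the easy piece matches the paper's handling of $T_2$. Where you diverge is the hard piece: the paper bounds $T_1$ by Cauchy--Schwarz against the Hellinger distance, obtaining $T_1 = O_p(\H(\hl, g_0)) = O_p(N^{-1/4})$, and declares the proof complete. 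Your observation that this route only delivers $\hatlambda^2 = O_p(N^{-1/4})$, i.e.\ $\hatlambda = O_p(N^{-1/8})$, is an accurate reading of what the paper's own Cauchy--Schwarz argument actually yields.

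However, your proposed remedy is a genuine gap. You assert that the exact first-moment matching of the log-concave MLE together with Wasserstein proximity of $\Fp$ to $G_0$ can be combined, via truncation at $\log N$ and a Lipschitz argument, to upgrade $\lvert\sigma_0^2-\widehat\sigma_n^2\rvert$ from $O_p(N^{-1/4})$ to $O_p(N^{-1/2})$, but you do not carry this out and it is far from routine. First-moment matching constrains only the first moment of $\hl$, not the second; the convex-order property of the log-concave projection gives only the one-sided inequality $\int z^2\,\hl(z)\,dz \le \int z^2\,d\Fp(z)$ with no matching lower bound; and the continuity results you invoke (Theorem 2.15 of \cite{dumbreg}, Theorem 2 of \cite{barber2020}) control the Wasserstein or Hellinger distance of the projected measure, not second-moment discrepancies at the $N^{-1/2}$ rate. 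Absent a concrete argument for the claimed sharper rate, your proposal establishes only $\hatlambda = O_p(N^{-1/8})$ --- the same rate the paper's own $T_1$ bound supports --- and the step you yourself flag as ``the key technical subtlety'' remains unproved.
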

 
 \begin{proof}[Proof of Lemma \ref{lemma: prop: rate of hatlambda}]

 
 The definition of $\hatlambda$ in \eqref{def: smoothing parameter} implies $\hatlambda^2=\widehat s_n^2-\widehat\sigma_n^2$. Here 
 \[\widehat\sigma_n^2=\edint z^2 \ghat(z)dz - \left( \edint z\ghat(z)dz \right)^2\]
 and $\widehat s_n^2$ is the sample variance of the combined sample of the $X_i-\bmu$'s and the $Y_j-\bDelta-\bmu$'s. 
 Recalling from Section \ref{sec: prop: step 1}  that the empirical distribution function of this combined sample was denoted to be $\Fp$, we obtain that 
 \[\widehat s_n^2=\edint z^2 d\Gpool(z) - \left( \edint z d\Gpool(z) \right)^2.\]
  Since the combined sample has sample-mean zero, it follows that $\int zd\Gpool(z)=0$.
Also, we constructed $\ghat$ so that $\int z\ghat(z)dz=0$.  Hence,
 \begin{equation}
 \label{def: bound: beta n}
 \begin{split}
     \hatlambda^2 &= \edint z^2\ghat(z)dz - \edint z^2 d\Gpool(z)\\
     &= \edint z^2 d\left( \Ghat(z)-\Gpool(z) \right)\\
     &\le \underbrace{\left\lvert \edint z^2 d\left( \Ghat(z)-\Gtrue(z) \right)\right\rvert}_{T_1} + \underbrace{\left\lvert \edint z^2 d\left( \Gpool(z)-\Gtrue(z) \right)\right\rvert}_{T_2}.
 \end{split}
 \end{equation}
 Using Cauchy-Schwartz inequality in the second step, we obtain that
 \begin{equation*}
 \begin{split}
   T_1=  & \left\lvert \edint z^2 d\left( \Ghat(z)-\Gtrue(z) \right)\right\rvert\\
     =& \left| \edint z^2 \left(\sqrt{\ghat}(z)-\sqrt{g_0}(z)  \right)\left( \sqrt{\ghat}(z)+\sqrt{g_0}(z)  \right)dz \right|\\
     \le& \left\{ \edint z^4 \left( \sqrt{\ghat}(z)+\sqrt{g_0}(z)  \right)^2 dz \right\}^{1/2} \H(\ghat, g_0)\\
     \le & \H(\ghat, g_0)\cdot \left\{ \edint 2z^4 \left( \ghat(z) + g_0(z) \right)dz \right\}^{1/2}\\
     =& \H(\ghat, g_0)\cdot \left\{ \edint 2z^4\ghat(z)dz + \edint 2z^4 g_0(z)dz \right\}^{1/2}\\
     \le & \H(\ghat, g_0)\cdot \left\{ \edint 2z^4\left| \ghat(z)-g_0(z) \right|dz + \edint 4z^4 g_0(z)dz \right\}^{1/2}.
 \end{split}
 \end{equation*}
 Since $g_0$ is a log-concave density, it has finite moments by Fact \ref{fact: Lemma 1 of theory paper}, which implies $\edint z^4 g_0(z)dz<\infty$. On the other hand, 
 \begin{equation*}
 \begin{split}
     \edint z^4\left| \ghat(z)-g_0(z) \right| dz\le \edint e^{2|z|}\left| \ghat(z)-g_0(z) \right| dz.
 \end{split}
 \end{equation*}
Now note that  Step 1 implies $\ghat$ converges to $g_0$ in total variation norm almost surely. Since convergence in total variation norm ensures weak convergence,  the distribution corresponding to $\ghat$ converges to $g_0$ weakly almost surely. Since $\ghat$'s are logconcave, the latter implies  \citep[Proposition 2][]{theory}
\[\edint e^{2|z|} \left| \ghat(z)-g_0(z) \right|dz\as 0.\]
 Therefore $\edint z^4\left| \ghat(z)-g_0(z) \right| \as 0$, and hence
 \begin{equation*}
     \left\lvert \edint z^2 d\left( \Ghat(z)-\Gtrue(z) \right)\right\rvert = O_p\left(  \H(\ghat, g_0)\right)=O_p(N^{-1/4}).
 \end{equation*}
 Therefore, we have shown that $T_1=O_p(N^{-1/4})$.
 Now we focus on the second term $T_2$ in \eqref{def: bound: beta n}, which equals
 \begin{align*}
  \MoveEqLeft \edint z^2 d\left( \Gpool(z)-\Gtrue(z) \right)=  \frac{m}{m+n}\underbrace{\edint z^2 d\left( \Fm(z+\bmu)-G_0(z) \right)}_{T_{21}} \\
     &\  +\frac{n}{m+n} \underbrace{\edint z^2 d\left( \Hn(z+\bmu+\bDelta)-G_0(z) \right)}_{T_{22}}.
\end{align*}
It suffices to show that $T_{21}=O_p(m^{-1/2})$ and $T_{22}=O_p(m^{-1/2})$. We will show that $T_{21}=O_p(N^{-1/4})$ because the proof of $T_{22}$ will follow similarly. Straightforward algebra shows that
\begin{align*}
    \MoveEqLeft \edint z^2 d\left( \Fm(z+\bmu)-G_0(z) \right)\\
    =&\  \edint z^2 d \Fm(z+\bmu)- \edint z^2 d F_0(z+\kmu) \\
    =&\  \edint (z-\bmu)^2 d \Fm(z)- \edint (z-\kmu)^2 d F_0(z)\\
    =&\  \edint (z-\kmu)^2 d(\Fm(z)- F_0(z))+\edint \{(z-\bmu)^2-(z-\kmu)^2\}d\Fm(z),
\end{align*}

whose first term is $O_p(m^{-1/2})$ because of central limit theorem and the second term equals
\[(\kmu-\bmu) \edint (2z-\bmu-\kmu)d\Fm(z)=-(\bmu-\mu)^2,\]
which is also of the order $O_p(m^{-1})$ because $\bmu=\bX$ is a $\sqrt m$-consistent estimator of $\kmu$ by central limit theorem. For the application of the first central limit theorem, $F_0$ is required to have a finite fourth moment. That is guaranteed for our $F_0$ because its all moments are finite by Fact \ref{fact: Lemma 1 of theory paper}. Therefore, we have shown that $T_{21}=O_p(m^{-1/2})$. Similarly, we can show that $T_{22}=O_p(m^{-1/2})$. Therefore, we have shown that $T_2=O_p(m^{-1/2})$. Since $T_1=O_p(N^{-1/4})$, the proof follows from \eqref{def: bound: beta n}.

\end{proof}

\subsection{Step 4: proving the conditions for $\hls$}
Lemmas \ref{lemma: prop: hellinger decomposition} and \ref{lemma: prop: rate of hatlambda} imply that $\H(\hls,g_0)=O_p(N^{-1/4})$, which directly proves Condition \ref{cond: hellinger rate}. Since this implies that  the Hellinger distance between these densities is $o_p(1)$, Fact \ref{fact: dTV and hellinger}  implies that the total variation distance between the corresponding distributions  approaches zero in probability. In other words, $\|\hls-g_0\|_1\to_p 0$. Next, we will show that $\|\hls(\cdot+\zeta_{m,n})-g_0\|_1\to_p 0$. To see this, note that 
\begin{align*}
  \|\hls(\cdot+\zeta_{m,n})-g_0\|_1\leq  \|\hls(\cdot+\zeta_{m,n})-g_0(\cdot+\zeta_{m,n})\|_1+\|g_0(\cdot+\zeta_{m,n})-g_0\|_1
\end{align*}
by triangle inequality. However, 
\[\|\hls(\cdot+\zeta_{m,n})-g_0(\cdot+\zeta_{m,n})\|_1=\|\hls-g_0\|_1=O_p(N^{-1/4}).\]
Also, another application of Fact \ref{fact: dTV and hellinger}  implies 
\[\|g_0(\cdot+\zeta_{m,n})-g_0\|_1\leq \sqrt{2}\H(g_0(\cdot+\zeta_{m,n}),g_0)\]
which is of the order $O_p(\zeta_{m,n})$ by Fact \ref{fact: helli fknot}. Since $\zeta_{m,n}=o_p(1)$, $\|g_0(\cdot+\zeta_{m,n})-g_0\|_1$ is of the order $o_p(1)$. Thus it follows that  $ \|\hls(\cdot+\zeta_{m,n})-g_0\|_1\to_p 0$. 

 Fact \ref {fact: convergence in probability to convergence almost surely} implies that given any subsequence of $\{m,n\}$,  we can find a further subsequence along which $\|\hls(\cdot+\zeta_{m,n})-g_0\|_1$ and $\zeta_{m,n}$ converges to zero almost surely. If we can show that along this subsequence $\|\hls(\cdot+\zeta_{m,n})-g_0\|_\infty\to 0$ with probability one, then  Fact \ref{fact: Shorack} would imply that 
 $\|\hls(\cdot+\zeta_{m,n})-g_0\|_\infty\to_p 0$ and part (A) of Condition \ref{condition: hn basic} will be proved. The same  holds for proving parts (B) and (C) of Condition \ref{condition: hn basic}. Therefore, we will assume that $\|\hls(\cdot+\zeta_{m,n})-g_0\|_1\as 0$ and $\zeta_{m,n}\as 0$. 

 The almost sure $L_1$ converges of $\hls(\cdot+\zeta_{m,n})$ to $g_0$ implies almost sure weak convergence of the corresponding distribution function to $G_0$. 
Since $\hls$ is log-concave,the above-mentioned weak convergence   implies  $\|\hls(\cdot+\zeta_{m,n})-g_0\|_\infty\as 0$ by Proposition 2(c) of \cite{cule2010}. Thus part (A) of Condition \ref{condition: hn basic} follows.  As a consequence, for any $K\subset \dom(\phi_0)$, 
\[\sup_{x\in K}|\log(\hls(\cdot+\zeta_{m,n}))-\psi_0|\as 0,\]
which implies part (B) of Condition \ref{condition: hn basic}. Part (C) of Condition \ref{condition: hn basic} follows from Part (B) by Theorem 25.7 of \cite{rockafellar}. Hence, we have proved that $\hls$ satisfies Condition \ref{condition: hn basic}.

\section{ Proof of Theorem \ref{theorem: main theorem}}
 \label{app: proof of Theorem 1}


We will prove the theorem only for the case when $\etan$ equals $CN^{-2p/5}$. This means we will show that  for any $p\in(0,1)$, if $\H(\hn, g_0)=O_p(N^{-p})$, then $\sqrt{\frac{mn}{N}}(\hDelta-\Delta_0)\to_d \N(0,\I^{-1})$ when $\etan=CN^{-2p/5}$.  Suppose $\etan=CN^{-2\varrho/5}$ with $\varrho\in(0,p]$. Note that $\H(\hn, g_0)=O_p(N^{-\varrho})$ trivially holds since
$\H(\hn, g_0)=O_p(N^{-p})$. Therefore, for $\etan=CN^{-2\varrho/5}$, the proof of  $\sqrt{\frac{mn}{N}}(\hDelta-\kdelta)\to_d \N(0,\I^{-1})$ will follow immediately from what we had just proved. Thus, it suffices to  take  $\etan=CN^{-2p/5}$ for the rest of the proof.

Equation \ref{definition: truncated one step estimator} implies that
\begin{align}
\label{intheorem: main: A B split}
 \hDelta - \bDelta=&\ \underbrace{\dint_{\xia+\bmu}^{\xib+\bmu} \dfrac{\hln'(x-\bmu)}{\hi(\etan)}d\Fm(x)\nonumber}_A\nn\\
&\ -\underbrace{\dint_{\xia+\bmu+\bDelta}^{\xib+\bmu+\bDelta} \dfrac{\hln'(y-\bmu-\bDelta)}{\hi(\etan)}d\Hn(y)}_B.
\end{align}
We will consider the decomposition of $A$ because the decomposition  of $B$ will follow similarly. To this end, we note that 
\begin{align}
\label{intheorem: main: A split}
 \MoveEqLeft \sqrt{m}A \cdot \hi(\etan)= \sqrt{m}\dint_{\xia+\bmu}^{\xib+\bmu} \hln'(x-\bmu)d\Fm(x)\nn\\
=& \underbrace{\sqrt{m}\dint_{\xia+\bmu}^{\xib+\bmu}(\hln'(x-\bmu)-\ps_0'(x-\mu_0))d(\Fm-F_0)(x)}_{T_{1\sN}}\nn\\
&\ +
\underbrace{\sqrt{m}\dint_{\xia+\bmu}^{\xib+\bmu}\hln'(x-\bmu)dF_0(x)}_{T_{2\sN}}\nn\\
 &\ +\underbrace{\sqrt{m}\dint_{\xia+\bmu}^{\xib+\bmu}\ps_0'(x-\mu_0)d(\Fm-F_0)(x)}_{T_{3\sN}}.
\end{align}


The proof of Theorem \ref{app: proof of Theorem 1} is divided into four  main steps. In the first step, we show that the term  $T_{1N}$ is $o_p(N^{-1/2})$ using  Donsker's theorem.  In the second step, we shall show that $T_{2N}$ has the following expression: 
 \[T_{2N}=\sqrt{m}\dint_{\xia}^{\xib}\hln'(z)g_0(z) dz+\sqrt{m}(\bmu-\kmu)\hin(\etan)+o_p(1).\]
 This term accounts for the bias due to the use of $\bmu$ instead of $\mu_0$ while computing the scores. The third step is to prove the asymptotic normality of $T_{3N}$. In the fourth step, we argue that the similar decomposition can be done on $B$ and then we will combine the  terms from $A$ and $B$ to get the asymptotic distribution of $\hDelta$.

  \subsubsection*{\textbf{First step: asymptotic negligibility of $\sqrt{m} T_{1N}$:}}
 

Recalling that we denoted the empirical process $\sqrt{m}(\Fm-F_0)$ by $\mathbb{G}_m$, we obtain 
 \begin{align}\label{intheorem: T1n: representation}
  T_{1N}= \sqrt{m}\dint_{\bmu+\xia}^{\bmu+\xib}\slb \hln'(x-\bmu)-\ph_0'(x)\srb d(\Fm-F_0)(x),
 \end{align}
 where $\ph_0$ denoted the concave function $\ph_0(x)=\ps_0(x-\mu_0)$. Let us denote  $\mathcal T_{m,n}= [\bmu+\xia,\bmu+\xib]$. Also let us define the function $h_{m,n}$ by 
 \begin{equation}\label{inlemma: def: main: hn}
     h_{m,n}(x)=(\hln'(x-\bmu)-\ph_0'(x))1_{\mathcal T_{m,n}}(x),\quad x\in\RR.
 \end{equation}
Then the expression of $T_{1N}$ in \eqref{intheorem: T1n: representation} rewrites as 
\[T_{1N}=\edint h_{m,n}(x)d\mathbb{G}_m(x).\]

  Since we want to apply the Donsker theorem, we need to find an appropriate class containing the function $h_{m,n}$. To this end, first note that $\hln'(x-\bmu)$ is bounded on $\mathcal T_{m,n}$. To see this, note that 
  Lemma~\ref{lemma: tilde psi prime bound} implies
 \begin{equation}\label{inlemma: t1: bounding hln prime}
 \sup_{x\in\mathcal T_{m,n}}|\hln'(x-\bmu)|=O_p(N^{p/5})
 \end{equation}
since  $\etan=O( N^{-2p/5})$. Next, we note that since $\hln$ is concave, $\hln'$ is a monotone function. However,  $x\mapsto \hln'(x) 1_{\mathcal T_{m,n}}(x)$ is not a monotone function. 

 Nonetheless, it is possible to extend the function $x\mapsto \hln'(x) 1_{\mathcal T_{m,n}}(x)$ to  the entire real  line $\RR$ in a manner that preserves the monotonicity of the resulting function, all while maintaining the same upper bound. This extension can be achieved by defining a new function $\widehat u_{m,n}$ which equals $\hln'$ on $\mathcal T_{m,n}$ and takes the values $\hln'(\bmu+\xia)$ and  $\hln'(\bmu+\xib)$ on the intervals $(-\infty,\bmu+\xia]$ and $[\bmu+\xib,\infty)$, respectively. We can also substitute $\hln'$ with $\widehat u_{m,n}$ in the definition of $h_{m,n}$, leading to
  \[h_{m,n}(x)=(\widehat u_{m,n}(x-\bmu)-{\phi}_0'(x))1_{\mathcal T_{m,n}}(x).\]
  

 For any $C>0$, we let
 \begin{equation}\label{intheorem: def: U n}
    \mathcal{U}_{N}(c)=\bigg\{u:\RR\mapsto[-c,c]\ \bl\ \ u\text{ is non-increasing}\bigg\}.
\end{equation}
 Now  define the class $\mathcal{M}_N(C)$ by
  \begin{align*}
 \mathcal{M}_N(C)=  \bigg\{h:\RR\mapsto  \RR\ \bl \ &\ h(x)=(u(x)-{\phi}_0'(x))1_{[r_1,r_2]}(x),\ u\in\mathcal{U}_{N}(M_N) \text{ where }\\
 &\ M_N=CN^{p/5},
 \|h\|_{P_0,2}\leq CN^{-2p/5}(\log N)^{3},\ \ \|h\|_\infty\leq M_N,\\
 &\ [r_1,r_2]\subset [\Delta_0-C\log N,\Delta_0+C\log N]\cap\iint(\dom(\phi_0)) \bigg \}.
 \end{align*}
Next, we show that $h_{m,n}\in\mathcal \M_N(C)$ with high probability for all sufficiently large $N$.
To this end, we note that 
\[\sup_{x\in\mathcal T_{m,n}}|\phi'_0(x)|=\sup_{x\in [\bmu-\mu_0+\xia,\bmu-\mu_0+\xib]}|\psi'_0(x)|.\]
Also since $\etan=O(N^{-2p/5})$, Lemma~\ref{lemma: bound: psi knot prime}  implies that 
 \begin{equation}\label{intheorem: boundedness of psp}
 \sup_{x\in \mathcal T_{m,n}}|\phi_0'(x)|=O_p(\log N),
 \end{equation}
 which, in conjunction with  \eqref{inlemma: t1: bounding hln prime} and \eqref{intheorem: boundedness of psp} imply that $\|h_{m,n}\|_{\infty}=O_p(N^{p/5})$. Since $\|\widehat u_{m,n}\|_{\infty}=O_p(N^{p/5})$, $\widehat u_{m,n}(\mathord{\cdot}-\bmu)\in \mathcal{U}_{N}(CN^{p/5})$ with high probability when  $C$ is  sufficiently large.
On the other hand, Lemma~\ref{Lemma: L2 norm of hn} yields that  $\|h_{m,n}\|_{P_0,2}=O_p(N^{-2p/5}(\log N)^{3})$.
Finally, 
\[
\lim_{n\to\infty}  \PP\slb  \mathcal T_{m,n}\subset [\mu_0-C\log n,\mu_0+C\log n]\cap \iint(\dom(\phi_0))\srb= 1
\]
by  Lemma~\ref{lemma: An inclusion}.
Therefore, it follows that given $t>0$, we can choose $C>0$ so large such that $\PP(h_{m,n}\in\mathcal{M}_N(C))>1-t$. Let us denote  $M_N=CN^{p/5}$.



 Our next task is to find the bracketing entropy of $\M_N(C)$ when $C$ is a large positive number. For any probability measure $Q$ and any $\e>0$, the bracketing entropy of $\mathcal{U}_{N}(M_N)$ can be bounded as
 \begin{equation}\label{inlemma: finite entropy increasing}
\log N_{[\ ]}(\e,\mathcal{U}_{N}(M_N),L_2(Q))\leq C'M_N\e^{-1}
   \end{equation}
   using 
 Theorem~2.7.5 of \cite{wc} (pp. $159$). Here  $C'>0$ is an absolute constant.   On the other hand,  Theorem~2.7.5 of \cite{wc} entails that the class  $\mathcal{F}_I$ of all indicator functions of  the form $1_{[z_1,z_2]}$ with $z_1\leq z_2$, $z_1,z_2\in\RR$,  satisfies
   \begin{equation}\label{inlemma: finite entropy indicator functions}
 N_{[\  ]}(\e,\mathcal{F}_I,L_2(Q))\leq C' 2\e^{-1}, 
   \end{equation}
which, in conjuction with \eqref{inlemma: finite entropy increasing},  leads to 
\[\log N_{[\ ]}(\e,\mathcal{M}_N(C),L_2(P_0))\lesssim {M_N}{\e}^{-1}.\]

For $x<1$, the bracketing integral takes the form 
\begin{align*}
    \mathcal{J}_{[\ ]}(x,\mathcal{M}_N(C),L_2(P_0))=&\ \dint_{0}^{x}\sqrt{1+\log N_{[\ ]}(\e,\mathcal{M}_N(C),L_2(P_0))}d\e\\
    \lesssim &\  2M_N\dint_{0}^{x/M_N}\e^{-1/2}d\e \lesssim \sqrt{x M_N}.
\end{align*}
Also, 
 \[\|\mathcal{M}_N(C)\|_{P_0,2}=\sup_{h\in\mathcal{M}_N(C)}\|h\|_{P_0,2}=CN^{-2p/5}(\log N)^{3}.\]
Letting $K_N=CN^{-2p/5}(\log N)^{3}$ and using 
 Fact~\ref{fact: empirical process of m-p's}, 
 we obtain that $\|\mathbb{G}_m\|_{\mathcal{M}_N(C)}=\sup_{h\in \mathcal{M}_N(C)}|\mathbb{G}_mh|$ satisfies 
\begin{align*}
   \E\|\mathbb{G}_m\|_{\mathcal{M}_N(C)}\lesssim &\   \mathcal{J}_{[\ ]}(K_N,\mathcal{M}_N(C),L_2(P_0))\slb 1+\frac{  \mathcal{J}_{[\ ]}(K_N,\mathcal{M}_N(C),L_2(P_0))}{K_N^2\sqrt{N}}M_N\srb\\
   \lesssim &\  \sqrt{K_N M_N}+K_N^{-1}M_N^2N^{-1/2}.
   \end{align*}
Since $K_N=CN^{-2p/5}(\log N)^3$ and $M_N=CN^{p/5}$, 
\begin{align*}
\sqrt{K_N M_N} =&\ \sqrt{C^2 N^{p/5} N^{-2p/5} (\log N)^{3}} = CN^{-p/10} (\log N)^{3/2} = o(1),\\
\text{and}\quad\frac{M_N^2}{K_N\sqn} = &\ \frac{C^2N^{2p/5}}{CN^{-2p/5} (\log N)^{3}\sqrt{N}}
= \frac{CN^{4p/5} N^{-1/2}}{(\log N)^3} 
= \frac{C N^{(8p-5)/10}}{(\log N)^{3}} 
= o(1),
\end{align*}
where the last step follows since  $p\leq 1/2$ by Condition \ref{cond: hellinger rate}. 


Let us fix $t'>0$  and $\xi>0$. Suppose $C$ is so large such that $\PP( h_{m,n}\notin \mathcal{M}_N(C))<\xi/2$. Then we obtain that 
\begin{align*}
  \MoveEqLeft  \PP\lb \edint h_{m,n}(x)d\mathbb{G}_m(x)>t' \rb\\
  \leq &\  \PP\lb \edint h_{m,n}(x)d\mathbb{G}_m(x)>t' , h_{m,n}\in \mathcal{M}_N(C)\rb+\PP\lb h_{m,n}\notin \mathcal{M}_N(C)\rb\\
  \stackrel{(a)}{\leq} &\  E\lbt\sup_{h\in\mathcal{M}_N(C)}\bl\edint h(x)d\mathbb{G}_m(x)\bl\rbt/t'+\xi/2
\end{align*}
by Markov's inequality. Therefore 
\[ \PP\lb \edint h_{m,n}(x)d\mathbb{G}_m(x)>t' \rb=O((\log N)^{3/2}N^{-p/10})/t'+\xi/2,\]
which is less than $\xi$ for sufficiently large $N$. 

Since $t'$ and $\xi$ are arbitrary, it follows that $\int h_{m,n} d\mathbb{G}_m$ is $o_p(1)$, which completes the proof of $ T_{1N}=o_p(1)$.



\subsubsection*{\textbf{Second step: asymptotic limit of $T_{2N}/(\mu_0-\bmu)$:}} Letting  $\td = \mu_0 - \bmu$, $T_{2N}$ writes as 
\begin{equation*}
\begin{split}
\MoveEqLeft \sqrt{m}\dint_{\xia+\bmu}^{\xib+\bmu}\hln'(x-\bmu)dF_0(x)= \sqrt{m}\dint_{\xia}^{\xib}\hln'(z)g_0(z) dz\\
    &\ + \sqrt{m} \hin(\etan)\td\underbrace{\dint_{\xia}^{\xib}\dfrac{\hln'(z)}{\td\hin(\etan)}\lb g_0(z-\td)-g_0(z)\rb dz}_{T'_{2N}}.
\end{split}
\end{equation*}
 Letting $\mathcal A_{m,n}=[\xia,\xib-\td]$, we write 
 $T_{2N}'$  as
\begin{align}\label{intheorem:t2n:representation}
\MoveEqLeft \dint_{\xia}^{\xib}\hln'(z)\dfrac{\lb g_0(z-\td)-g_0(z)\rb}{\td\hin(\etan)} dz\ = \dint_{\xia}^{\xib}\hln'(z)\dfrac{\dint_{z}^{z-\td}g_0'(t)dt}{\td\hin(\etan)} dz\nn\\
= &\ -\dint_{\RR}1_{\mathcal A_{m,n}}(t)g_0'(t)\dfrac{\dint_{t}^{t+\td}\hln'(z)dz}{\td\hin(\etan)}(t)dt
\end{align}
 by Fubini's Theorem. Here Fubini's Theorem applies because $g_0$ is absolutely continuous, which follows since $\Io<\infty$ (see Section \ref{sec:intro}). From Lemma B.1 of \cite{laha2021adaptive} it follows that  the integral on the right hand side of \eqref{intheorem:t2n:representation} converges in probability to $-1$. Although the setup used in  Lemma B.1 of \cite{laha2021adaptive} mentions the symmetry of $g_0$, the  proof Lemma B.1 of \cite{laha2021adaptive} just requires $g_0$ to be log-concave with finite Fisher information $\mathcal I_{g_0}$. Therefore,  
\[T_{2N}=\sqrt{m}\dint_{\xia}^{\xib}\hln'(z)g_0(z) dz- \sqrt{m} \hin(\etan)\td(1+o_p(1)).\]
Recall that $\sqrt{m}\td=\sqrt{m}(\kmu-\bmu)$ is of the order $O_p(1)$ by our assumption on $\bmu$. Since $\hin(\etan)\to_p\Io$ by Lemma \ref{lemma: consistency of FI: 2}, we obtain $\hin(\etan)=O_p(1)$. Therefore, 
 \begin{align*}
   \sqrt{m}\td \hin(\etan)(1+o_p(1))=\sqrt{m}\td\hin(\etan)+o_p(1),
 \end{align*}
 Thus we have shown that the bias term $T_{2N}$ equals
 \[T_{2N}=\sqrt{m}\dint_{\xia}^{\xib}\hln'(z)g_0(z) dz+\sqrt{m}(\bmu-\kmu)\hin(\etan)+o_p(1)\]

\subsubsection*{\textbf{Third step: showing the asymptotic normality of $T_{3N}$:}}
Letting $C_{m,n}=(-\infty,\bmu+\xia]\cup[\bmu+\xib,\infty]$, we note that  $T_{3N}$ writes as 
\begin{align}\label{intheorem: T3n split new}
T_{3N}=&\ \sqrt{m}\int_{\bmu+\xia}^{\bmu+\xib}\ps_0'(x-\mu_0)d(\mathbb{F}_m-F_0)(x)\nonumber\nn\\
=&\  \int_{-\infty}^{\infty}\ps_0'(x-\mu_0)d\mathbb{G}_m(x) -\dint_{C_{m,n}} \ps_0'(x-\mu_0)d\mathbb{G}_m(x).
\end{align}
An application of the the central limit Theorem yields
\[\edint\ps_0'(x-\mu_0)d\mathbb{G}_m(x)\to_d \N(0,\I).\]
Here the central limit theorem applies because the second moment of $\psi_0'(X-\mu_0)$ exists when $X$ has the distribution function $F_0$. 

Hence, the proof will be complete if we can show that 
\begin{align}
\label{intheorem: main: step 2}
  \edint 1_{C_{m,n}}(x)\ps_0'(x-\mu_0)^2d\mathbb G_m(x)=o_p(1).
\end{align}
To this end, we observe that the function $1_{C_{m,n}^c}=1_{[\widehat\xi_1,\widehat\xi_2]}$ belongs to the class of all indicator functions of the form $1_{[z_1,z_2]}$ where $z_1\leq z_2$, $z_1,z_2\in\RR$. The latter class is Donsker by  \eqref{inlemma: finite entropy indicator functions}. Then  Theorem 2.1 of \cite{epindex} implies \eqref{intheorem: main: step 2} holds 
if 
\[\edint 1_{C_{m,n}}(x)\ps_0'(x-\mu_0)^2f_0(x)dx\to_p 0.\]

To show the above, we will exploit the fact that  $\I<\infty$. Fact~\ref{fact: condition for integrability}  implies that given any $\e>0$, we can find a $\sigma>0$ so small such that 
 $\int_{\mathcal B}\ps_0'^2(x-\mu_0)f_0(x)dx<\e$ for any $P_0$-measurable set $\mathcal{B}\subset\RR$ provided $P_0(\mathcal B)<\sigma$. 
 Thus it is enough to  show that $\int_{\mathcal C_{m,n}}f_0(x)dx=o_p(1)$. 

 Suppose $G_0^{-1}(0)>-\infty$ and $G_0^{-1}(1)<\infty$. 
 Since $\bmu\to_p\mu_0$, $\xia\to_p G_0^{-1}(0)$ and $ \xib\to_p G_0^{-1}(1)$ by Lemma~\ref{lemma: xi: xi goes to 1}, and $F_0$ is continuous, it follows that
\begin{align*}
   \dint_{C_{m,n}}f_0(x)dx=&\ 1-F_0(\bmu+\xib)+F_0(\bmu+\xia) \\
   \to_p &\  1-F_0(\mu_0+G_0^{-1}(1))+F_0(\mu_0+G_0^{-1}(0))
\end{align*}
by continuous mapping theorem. Since $\mu_0+G_0^{-1}(1)=F_0^{-1}(1)$
  and $\mu_0+G_0^{-1}(0)=F_0^{-1}(0)$, it is clear that  $\int_{\mathcal C_{m,n}}f_0(x)dx=o_p(1)$ in this case. If $G_0^{-1}(0)=-\infty$ or $G_0^{-1}(1)=\infty$, then continuous mapping theorem may not be applicable. However, in these cases, $\xia\to -\infty$ or $\xib\to\infty$, respectively,  with probability approaching one. Using the above, we can show that $\int_{\mathcal C_{m,n}}f_0(x)dx=o_p(1)$ still holds in these cases. Therefore, we have shown that 
 \[T_{3N}=\edint\ps_0'(x-\mu_0)d\mathbb{G}_m(x)+o_p(1),\]
 where $\dint\ps_0'(x-\mu_0)d\mathbb{G}_m(x)\to_d \N(0,\Io)$ by the central limit theorem. 


\subsection*{Fourth step: combining $A$ and $B$}
Combining the last three steps with \eqref{intheorem: main: A split}, we obtain that 
\begin{align*}
\MoveEqLeft \sqrt{\dfrac{mn}{N}}A \hin(\etan)= T_{1N}+T_{2N}+T_{3N}\\
=&\ \sqrt{\dfrac{mn}{N}}(\bmu-\mu_0)\hin(\etan)+\sqrt{\dfrac{mn}{N}}\dint_{\xia}^{\xib}\hln'(x)g_0(x)dx\\
&\ +\sqrt{\frac{n}{N}}\edint\ps_0'(x-\mu_0)d\mathbb{G}_m(x)+o_p(1).
\end{align*}
Since $\sqrt{m}\int\ps_0'(x-\mu_0)d\mathbb{G}_m(x)\to_d \N(0,\Io)$, the random variable $\sqrt{m}\int\ps_0'(x-\mu_0)d\mathbb{G}_m(x)$ is $O_p(1)$. 
Also noting $\sqrt{n/N}= \sqrt{1-\lambda}+o(1)$, we deduce that 
\[\sqrt{\frac{mn}{N}}\edint\ps_0'(x-\mu_0)d\mathbb{G}_m(x)=\sqrt{(1-\lambda)m}\edint\ps_0'(x-\mu_0)d\mathbb{G}_m(x)+o_p(1)\]
Recalling $\mathbb G_m=\sqrt{m}(\Fm-F_0)$, we thus obtain that
\begin{align*}
 \sqrt{\dfrac{mn}{N}}A \hin(\etan)= &\ \sqrt{\dfrac{mn}{N}}(\bmu-\mu_0)\hin(\etan)+\sqrt{\dfrac{mn}{N}}\dint_{\xia}^{\xib}\hln'(x)g_0(x)dx\\
&\ +\sqrt{(1-\lambda)m}\edint\ps_0'(x-\mu_0)d(\Fm(x)-F_0(x))+o_p(1)
\end{align*}
By symmetry, the term $B$ defined in \eqref{intheorem: main: A B split} satisfies 
\begin{align*}
\MoveEqLeft  \sqrt{\dfrac{mn}{N}}B \hin(\etan)=  \sqrt{\dfrac{mn}{N}}(\bmu+\bDelta-\mu_0-\kdelta)\hin(\etan)\\
 &\ +\sqrt{\dfrac{mn}{N}}\dint_{\xia}^{\xib}\hln'(x)g_0(x)dx +\sqrt{\lambda n}\edint\ps_0'(x-\mu_0-\kdelta)d(\Hn(x)-H_0(x))+o_p(1).
\end{align*} 
Note that
\[\sqrt{n}\edint\ps_0'(x-\mu_0-\kdelta)d(\Hn(x)-H_0(x))\to_d \N(0,\Io).\]
Thus
\begin{align*}
\MoveEqLeft \sqrt{\frac{mn}{N}}\hin(\etan)(A-B)
= \sqrt{(1-\lambda)m}\edint\ps_0'(x-\mu_0)d(\Fm(x)-F_0(x))\\
&\ - \sqrt{\lambda n}\edint\ps_0'(x-\mu_0-\kdelta)d(\Hn(x)-H_0(x)) -\sqrt{\dfrac{mn}{N}}(\bDelta-\Delta_0)\hin(\etan) +o_p(1).
\end{align*}
Let us denote
\begin{align*}
  \ZZ_{m,n}= &\ \sqrt{(1-\lambda)m}\edint\ps_0'(x-\mu_0)d(\Fm(x)-F_0(x))\\
   &\ - \sqrt{\lambda n}\edint\ps_0'(x-\mu_0-\kdelta)d(\Hn(x)-H_0(x)). 
\end{align*}
Since the $X$-sample and $Y$-sample are independent, and $\lambda\in[0,1]$, it follows that 
$
 \ZZ_{m,n} \to_d \N(0,\Io). 
$
Thus we have obtained that
\[\sqrt{\frac{mn}{N}}(A-B)=\frac{\ZZ_{m,n}}{\hin(\etan)}-\sqrt{\dfrac{mn}{N}}(\bDelta-\Delta_0) +o_p(1)\]
From \eqref{intheorem: main: A B split} we obtain that 
\[(\sqrt{mn/N})(\hDelta-\bDelta)=(\sqrt{mn/N})(A-B),\]
leading to
\[\sqrt{\frac{mn}{N}}(\hDelta-\bDelta)=\frac{\mathbb Z_{m,n}}{\hin(\etan)}-\sqrt{\frac{mn}{N}}(\bDelta-\kdelta)+o_p(1).\]
A change of sides yield
\[\sqrt{\frac{mn}{N}}(\hDelta-\kdelta)=\frac{\mathbb Z_{m,n}}{\hin(\etan)}+o_p(1).\]
Since  Lemma \ref{lemma: consistency of FI: 2} implies $\hin(\etan)\to_d\Io$, we obtain  $\ZZ_{m,n}/\hin(\etan)\to_d \N(0,\Io^{-1})$ by Slutskey's Lemma. Therefore, the proof follows.

\hfill $\Box$
 \

\section{Auxilliary lemmas for proving Theorem \ref{theorem: main theorem}}
\label{secpf: auxilliary} 

 


 In this section, we will prove some auxiliary lemmas required for proving Theorem \ref{theorem: main theorem}.
 We will assume that $g_0$ satisfies Assumption \ref{assump: L} in all the lemmas below. 
 
Some of the lemmas in this section will rely on  quantile functions. We will repeatedly use some common results regarding the quantile function, which is the genralized of a distribution function. We list these results below for the sake of convenience. First, we state a Fact collecting some common properties of the quantile function. 
\begin{fact}
\label{fact: inverse common}
    Suppose $F$ is a distribution function on $\RR$, $x\in\RR$, and $t\in(0,1)$. Then the following assertions hold. 
    \begin{enumerate}[label=(\roman*)]
        \item $F(F^{-1}(t))=t$ if $F$ is continuous.
        \item $F^{-1}(F(x))\leq x$ if and only if $x\geq F^{-1}(0)$.
        \item $F^{-1}(F(x))= x$ if $F$ is strictly increasing in a neighborhood of $x$.
        \item $F^{-1}$ is strictly increasing on $(0,1)$ if and only if $F$ is continuous. 
    \end{enumerate}
\end{fact}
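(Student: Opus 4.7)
The plan is to derive all four assertions from the standard definition $F^{-1}(t) = \inf\{x \in \RR : F(x) \geq t\}$ combined with the right-continuity of any distribution function. The crucial observation is that right-continuity makes the set $\{x : F(x) \geq t\}$ closed from below, so its infimum is attained; hence $F(F^{-1}(t)) \geq t$ for every $t \in (0,1]$, which is equivalent to the Galois-type correspondence $F^{-1}(t) \leq x \iff t \leq F(x)$. These two ingredients drive the entire argument.

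For part (i), the attainment inequality gives $F(F^{-1}(t)) \geq t$ directly. For the reverse direction, choose $x_n \uparrow F^{-1}(t)$ with $x_n < F^{-1}(t)$; by the infimum definition $F(x_n) < t$ for each $n$, so continuity of $F$ yields $F(F^{-1}(t)) = \lim_n F(x_n) \leq t$. For part (iii), strict monotonicity of $F$ on a neighborhood $(x-\delta, x+\delta)$ implies that every $y \in (x-\delta, x)$ satisfies $F(y) < F(x)$, and so no such $y$ lies in $\{z : F(z) \geq F(x)\}$; together with $x$ itself belonging to that set, the infimum equals $x$. For part (iv), the forward direction uses part (i): if $F$ is continuous then $F(F^{-1}(t)) = t$ for all $t \in (0,1)$, so two inputs with equal $F^{-1}$-values must be equal, forcing strict monotonicity. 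Conversely, if $F$ has a jump at $x_0$ with $F(x_0-) < F(x_0)$, then a direct verification shows $F^{-1}(t) = x_0$ for every $t \in (F(x_0-), F(x_0)]$, producing a flat piece that destroys strict monotonicity.

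The main obstacle will be part (ii), whose statement is sensitive to the convention adopted for $F^{-1}(0)$. The natural choice consistent with the Galois correspondence is $F^{-1}(0) = \inf\{y : F(y) > 0\}$, i.e., the left endpoint of the support of $F$. With this convention the argument splits cleanly. If $x \geq F^{-1}(0)$, then $x$ lies in $\{y : F(y) \geq F(x)\}$ (by monotonicity of $F$), so $F^{-1}(F(x)) \leq x$ is immediate from the definition of the infimum. If instead $x < F^{-1}(0)$, the definition of $F^{-1}(0)$ forces $F(x) = 0$, so $F^{-1}(F(x)) = F^{-1}(0) > x$, giving the reverse implication. The care required is to verify that this is the convention consistent with the downstream uses in Section~\ref{secpf: auxilliary} that invoke $F^{-1}(0)$ and $F^{-1}(1)$ (in particular the quantile statements appearing in Lemma~\ref{lemma: xi: xi goes to 1}); an alternative convention such as $F^{-1}(0) = -\infty$ would render one side of the equivalence vacuous and require restatement.
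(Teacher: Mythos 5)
Your proof is correct. Note, however, that the paper does not actually prove this fact: it simply cites Lemma A.3 of \cite{bobkovbig}, so there is no in-paper argument to compare against. Your self-contained derivation from the definition $F^{-1}(t)=\inf\{x: F(x)\ge t\}$, resting on the two pillars of right-continuity (which gives the attained infimum $F(F^{-1}(t))\ge t$) and the Galois correspondence, is exactly the standard route and all four parts check out. Two small remarks. First, in part (iii) you rule out only $y\in(x-\delta,x)$ from the set $\{z:F(z)\ge F(x)\}$; to conclude the infimum is $x$ you should also note that any $z\le x-\delta$ satisfies $F(z)\le F(x-\delta/2)<F(x)$ by monotonicity combined with the strict increase on the neighborhood, so no $z<x$ lies in the set. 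Second, your concern about the convention for $F^{-1}(0)$ is well placed and your choice $F^{-1}(0)=\inf\{y:F(y)>0\}$ (possibly $-\infty$) is the one consistent with the paper's downstream usage: Lemma \ref{lemma: xi: xi goes to 1} explicitly treats both the case $G_0^{-1}(0)>-\infty$ and the case $G_0^{-1}(0)=-\infty$, i.e., it reads $G_0^{-1}(0)$ as the left endpoint of the support, exactly as you do; with that convention your two-case argument for part (ii) is airtight.
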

For a proof of Fact \ref{fact: inverse common}, see Lemma A.3 of \cite{bobkovbig}.  The next fact pertains to the diffrentiability of the quantile function $F^{-1}$.
   
\begin{fact}[Proposition A.18 of \cite{bobkovbig}]
 \label{fact: bobkov big}
 Consider a density $f$ supported on an (potentially unbounded) interval and let $F$ be the corresponding distribution function. Then $F^{-1}$ is strictly increasing, and  $F^{-1}(q_2)-F^{-1}(q_1)=\int_{q_1}^{q_2}dt/f(F^{-1}(t))$ for all $0<q_1<q_2<1$.
 \end{fact}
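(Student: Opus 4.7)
The plan is to derive both assertions from the fact that $F$ is a continuous, strictly increasing, absolutely continuous function on the interior of $I$, the interval on which $f$ is supported. Since $f$ is a density on $I$, $F(x) = \int_{-\infty}^{x} f(t)\,dt$ is continuous, and for any $a < b$ lying in $\text{int}(I)$ we have $F(b) - F(a) = \int_a^b f(t)\,dt > 0$; hence $F$ restricted to $\text{int}(I)$ is a strictly increasing bijection onto $(0,1)$.

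For strict monotonicity of $F^{-1}$, I would argue directly: if $0 < q_1 < q_2 < 1$ satisfied $F^{-1}(q_1) = F^{-1}(q_2) = x$, then by the defining property of the generalized inverse one would have $F(x) \geq q_2$ while $F(x-) \leq q_1$, forcing a jump of size at least $q_2 - q_1$ at $x$ and contradicting the continuity of $F$. Alternatively, since $F$ is continuous, Fact \ref{fact: inverse common}(iv) immediately yields the claim.

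The integral formula is most naturally proved by a change of variables $u = F(t)$ in the trivial identity
\[
F^{-1}(q_2) - F^{-1}(q_1) = x_2 - x_1 = \int_{x_1}^{x_2} dt,
\]
where $x_i = F^{-1}(q_i)$. Because $F$ is absolutely continuous with $F'(t) = f(t)$ almost everywhere on $[x_1, x_2]$, one formally has $du = f(t)\,dt$, i.e.\ $dt = du/f(F^{-1}(u))$, and the limits transform via $F(x_i) = q_i$ (using continuity of $F$; cf.\ Fact \ref{fact: inverse common}(i)) to yield the claimed identity.

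The main obstacle is making the change of variables rigorous when $F$ is only absolutely continuous rather than $C^1$, so the classical substitution rule does not apply verbatim. The cleanest route is to invoke the change-of-variables theorem for monotone absolutely continuous functions (for example, Theorem~7.26 of Rudin's \emph{Real and Complex Analysis}), which needs only absolute continuity of $F$ together with strict monotonicity. An alternative is to first establish that $F^{-1}$ is itself absolutely continuous on $[q_1, q_2]$ (via the inverse-function theorem for monotone absolutely continuous maps), then identify $(F^{-1})'(u) = 1/f(F^{-1}(u))$ almost everywhere, and finally integrate to recover the formula.
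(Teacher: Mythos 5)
The paper does not prove this statement at all: it is recorded as a Fact and delegated entirely to Proposition~A.18 of the cited reference, so there is no internal proof to compare against. Your argument is a legitimate self-contained derivation and is essentially correct. The monotonicity part is fine either via your jump argument or, as you note, by directly invoking Fact~\ref{fact: inverse common}(iv). For the integral identity, your main route (substituting $u=F(t)$ in $\int_{x_1}^{x_2}dt$ and justifying it by the change-of-variables theorem for monotone absolutely continuous maps) works; applied to the nonnegative Borel function $h(u)=1/f(F^{-1}(u))$ with $g=F$ on $[x_1,x_2]$ it gives $\int_{q_1}^{q_2}h(u)\,du=\int_{x_1}^{x_2}h(F(t))F'(t)\,dt=\int_{x_1}^{x_2}f(t)/f(t)\,dt=x_2-x_1$, which is exactly the claim. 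Your alternative route (absolute continuity of $F^{-1}$ with $(F^{-1})'=1/(f\circ F^{-1})$ a.e.) is closer in spirit to how the cited reference proceeds. Two small points are worth making explicit: first, that $x_1,x_2$ and indeed $F^{-1}((0,1))$ lie in $\mathrm{int}(I)$, so that $f>0$ there (making the integrand finite) and $F^{-1}(F(t))=t$ there (Fact~\ref{fact: inverse common}(iii)), which is what collapses $f(F^{-1}(F(t)))$ to $f(t)$; second, measurability of $f\circ F^{-1}$, which is handled by taking a Borel version of $f$ and noting that $F^{-1}$ pulls Lebesgue-null sets back to Lebesgue-null sets since $|F(N)|=\int_N f=0$ for null $N$. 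Neither is a real obstruction, so I would count the proof as correct modulo routine detail.
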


   Suppose  $f$ is a log-concave density with distribution function $F$.    Since $F$ is continuous,  $F^{-1}$ is  strictly increasing on $(0,1)$  and $F(F^{-1}(t))=t$ for any $t\in(0,1)$ by Fact \ref{fact: inverse common}.   Note that  $\supp(f)=\dom(\log f)$.  Since $\log f$ is concave, its domain is an interval, which implies $\supp(f)$ is also an interval \citep[cf. pp. 74 of][]{hiriart2004}. Then $f>0$ on $J(F)=\{x\in\RR: 0<F(x)<1\}$ \citep[cf. Theorem 1(iv) of][]{dumbgen2017}.  Therefore, $J(F)=\iint(\supp(f))=\iint(\dom(\psi_0))$, and $F$ is strictly increasing on $J(F)$. Hence, for any $x\in J(F)$, $F^{-1}(F(x))=x$ by Fact  \ref{fact: inverse common} (iii).
 Since the density $f$ is supported on an interval, by
Fact \ref{fact: bobkov big},   $F^{-1}$ is strictly increasing and   differentiable with derivative $1/f(F^{-1}(t))$ on $(0,1)$.
   
 Some of the upcoming lemmas  will use lemmas from Appendix B.2 of  \cite{laha2021adaptive}.  \cite{laha2021adaptive} considers observations from a symmetric log-concave density  and defines  $\xin$ as $\tilde{G}_n^{-1}(1 - \eta_n)$, where $\tilde{G}_n$ is their analogue of the estimated distribution function $\tilde G_{m,n}$ and $\eta_n$ denotes the truncation level, i.e., it is an analogue of $\etan$. Therefore, when we mention that a lemma follows from \cite{laha2021adaptive}, the reader needs to replace $\xin$ with $\xib$ and $-\xin$ with $\xia$ in the original proof.  In \cite{laha2021adaptive}'s setup, $g_0$ satisfies the same assumptions as required by our setup. The only additional assumption on $g_0$ made in \cite{laha2021adaptive} is that of symmetry.  However,  the proofs of the lemmas cited in this section do not use the symmetry of $g_0$  and, hence, are applicable to our setup.

 \subsubsection{\textbf{Lemmas on $\xia, \xib$:}}
 \label{sec: lemmas on xi n}

Recall that we defined $\xia$ and $\xib$ to be $\tilde G_{m,n}^{-1}(\etan)$ and $\tilde G_{m,n}^{-1}(1-\etan)$, respectively, in Section \ref{sec: method}. The choice of $\etan$ and $\hn$ corresponding to $\tilde G_{m,n}$ should be clear from the context. 
 \begin{lemma}
 \label{lemma: xi: xi is in dop psi knot}
Suppose  $\hn\in\mathcal{LC}$ satisfies  Conditions~\ref{condition: hn basic} and \ref{cond: hellinger rate}. Then
\[\PP\slb [\xia,\xib]\subset \mathrm{int}(\dom(\psi_0))\srb\to 1,\]
provided $\etan=C N^{-2p/5}$, where $C>0$ and $p$ is as in Condition~\ref{cond: hellinger rate}. 

 \end{lemma}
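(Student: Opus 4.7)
Since $g_0 \in \mathcal{LC} \cap \mathcal{P}_0$, Fact \ref{fact: Lemma 1 of theory paper} and the definition of log-concavity imply that $\iint(\dom(\psi_0))$ is an open interval, which I write as $(a,b)$ with $-\infty \leq a < b \leq \infty$. Because $g_0 > 0$ throughout $(a,b)$ and vanishes (or equals $-\infty$ for $\psi_0$) off $[a,b]$, the distribution function satisfies $G_0(x) \to 0$ as $x \downarrow a$ and $G_0(x) \to 1$ as $x \uparrow b$; in particular, if $b < \infty$ then $G_0(b) = 1$, and if $a > -\infty$ then $G_0(a) = 0$. The goal is therefore to show that $\xia > a$ and $\xib < b$ each hold with probability tending to one.

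The main reduction is the following uniform control on distribution functions. By Fact \ref{fact: dTV and hellinger} and Condition \ref{cond: hellinger rate},
\begin{equation*}
\sup_{x\in\RR}\bigl|\tilde G_{m,n}(x) - G_0(x)\bigr| \;\leq\; d_{TV}(\hn,g_0) \;\leq\; \sqrt{2}\,\mathcal{H}(\hn,g_0) \;=\; O_p(N^{-p}).
\end{equation*}
Since $\eta_{m,n} = C N^{-2p/5}$ and $p > 0$, we have $p > 2p/5$, so $N^{-p} = o(\eta_{m,n})$. Consequently, for any fixed $c > 0$, the event $\{\sup_x |\tilde G_{m,n}(x) - G_0(x)| < c\,\eta_{m,n}\}$ has probability tending to one.

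I would then treat the two endpoints separately. For $\xib$: if $b = \infty$, there is nothing to prove, since $\hn$ is a bona fide density and hence $\xib = \tilde G_{m,n}^{-1}(1-\eta_{m,n}) < \infty = b$ deterministically. If $b < \infty$, then on the high-probability event above (with $c = 1/2$, say) we have $\tilde G_{m,n}(b) \geq G_0(b) - \eta_{m,n}/2 = 1 - \eta_{m,n}/2 > 1 - \eta_{m,n}$; because $\hn$ has a density, $\tilde G_{m,n}$ is continuous, so $\tilde G_{m,n}(b - \varepsilon) > 1 - \eta_{m,n}$ for some $\varepsilon > 0$, forcing $\xib \leq b - \varepsilon < b$. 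The argument for $\xia > a$ is entirely symmetric: if $a = -\infty$, trivial; otherwise $\tilde G_{m,n}(a) \leq G_0(a) + \eta_{m,n}/2 = \eta_{m,n}/2 < \eta_{m,n}$ on the same event, and continuity of $\tilde G_{m,n}$ yields $\xia > a$. Combining the two and intersecting the probability-one-in-the-limit events completes the proof.

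There is no real obstacle here; the only minor care required is the case split on whether $a, b$ are finite, together with the observation that the Hellinger rate $N^{-p}$ is strictly faster than the truncation rate $N^{-2p/5}$, which is exactly the purpose of the calibration $\varrho \leq p$ made elsewhere in the paper.
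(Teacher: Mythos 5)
Your proof is correct and follows essentially the same approach as the paper's: both bound the deviation of $\tilde G_{m,n}$ from $G_0$ via $d_{TV}\leq\sqrt{2}\,\mathcal{H}(\hn,g_0)=O_p(N^{-p})$, exploit that $\eta_{m,n}=CN^{-2p/5}\gg N^{-p}$, and use continuity of $\tilde G_{m,n}$. The only cosmetic difference is that the paper evaluates $G_0$ at the random quantiles $\xia,\xib$ (showing $0<G_0(\xia)<1$ and hence $\xia\in J(G_0)=\iint(\dom(\psi_0))$), whereas you evaluate $\tilde G_{m,n}$ at the deterministic endpoints $a,b$ of $\iint(\dom(\psi_0))$, with a case split on whether each is finite.
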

 
 \begin{proof}[Proof of Lemma~\ref{lemma: xi: xi is in dop psi knot}]
 
 We will only prove that $\PP(\xia\in \iint(dom(\psi_0)))\to 1 $ because the proof of $\PP(\xib\in \iint(dom(\psi_0)))\to 1 $ will follow similarly. We will first show that $\xia\in\iint(J(G_0))$ with high probability for large $m$ and $n$. To this end, observe that 
  \[|G_0(\xia)-\tilde G_{m,n}(\xia)|\leq d_{TV}(G_0,\tilde G_{m,n})\stackrel{(a)}{\leq} \sqrt{2}\H(\hn,g_0),\]
  where  step (a) follows from Fact~\ref{fact: dTV and hellinger}.
 Note that for any $q\in(0,1)$ and  any continuous distribution function $F$  over $\RR$,
$F(F^{-1}(q))= q$. Hence, $\tilde G_{m,n}(\xia)=\tilde G_{m,n}(\tilde G_{m,n}^{-1}(\etan))=\etan$. Therefore, $G_0(\xia)\geq \tilde G_{m,n}(\xia)-\sqrt{2}\mathcal{H}(\hn, g_0)= \etan - \sqrt{2}\mathcal{H}(\hn, g_0)$. Moreover, noting  $\mathcal{H}(\hn, g_0) = O_p(N^{-p})$ from Condition~\ref{cond: hellinger rate} and $\etan=CN ^{-2p/5}\gg N^{-p}$, we derive $\PP(G_0(\xia)\geq \etan/2)\to 1$. On the other hand, 
\[G_0(\xia)\leq \tilde G_{m,n}(\xia)+\sqrt{2}\H(\hn,g_0)=\etan+\sqrt{2}\H(\hn,g_0),\]
which is less than $1/2$ with  probability approaching one. Therefore, $\PP(\xia\leq G_0^{-1}(1/2))\to 1$. Therefore, $\xia\in\iint(J(G_0))$ with probability approaching one.

 We already mentioned that  $\iint(J(G_0))=\iint(\dom(\psi_0))$ for log-concave $g_0$. Therefore the proof of  $\PP(\xia\in \iint(dom(\psi_0)))\to 1 $ is complete. The other part will follow similarly by establishing $G_0(\xib)\leq \tilde G_{m,n} (\xib) + O_p(N^{-p})$. 

 \end{proof}
 
 \begin{lemma}
 \label{lemma: xi: xi goes to 1}
 Under the setup of Lemma~\ref{lemma: xi: xi is in dop psi knot}, $\xia\to_p G_0^{-1}(0)$ and $\xib\to_p G_0^{-1}(1)$ as $\etan\to 0$.
 \end{lemma}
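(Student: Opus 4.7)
The plan is to translate the near-identity $\tilde G_{m,n}(\xia) = \etan \to 0$ into a statement about $\xia$ itself, via the total variation bound on $\tilde G_{m,n}-G_0$ already implicit in the proof of Lemma~\ref{lemma: xi: xi is in dop psi knot}. The key starting point: since $\hn\in\mathcal{LC}$, its distribution function $\tilde G_{m,n}$ is continuous and strictly increasing on $J(\tilde G_{m,n})$, so Fact~\ref{fact: inverse common}(i) gives $\tilde G_{m,n}(\xia) = \etan$. Combining with Fact~\ref{fact: dTV and hellinger} and Condition~\ref{cond: hellinger rate} yields
\[
\bigl| G_0(\xia) - \etan \bigr| \;\le\; \sqrt{2}\,\mathcal H(\hn, g_0) \;=\; O_p(N^{-p}).
\]
Since $\etan = CN^{-2p/5}\gg N^{-p}$, this gives $G_0(\xia)\to_p 0$, and symmetrically $G_0(\xib) = 1-\etan + O_p(N^{-p})\to_p 1$. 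The remaining work is to convert these convergences of the $G_0$-values into convergence of the arguments.

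I would then split into two cases. If $G_0^{-1}(0)>-\infty$, set $\alpha = G_0^{-1}(0)$ and fix $\epsilon>0$. For the upper tail, continuity and strict monotonicity of $G_0$ at $\alpha$ give $\delta := G_0(\alpha+\epsilon)>0$, and $\{\xia > \alpha+\epsilon\}\subseteq\{G_0(\xia)>\delta\}$, whose probability vanishes. For the lower tail, $G_0(\alpha-\epsilon)=0$ (by definition of $G_0^{-1}(0)$ as the left endpoint of $\supp(g_0)$), so on the event $\{\xia<\alpha-\epsilon\}$ monotonicity of $\tilde G_{m,n}$ yields
\[
\etan \;=\; \tilde G_{m,n}(\xia) \;\le\; \tilde G_{m,n}(\alpha-\epsilon) \;\le\; G_0(\alpha-\epsilon) + \sqrt{2}\,\mathcal H(\hn,g_0) \;=\; O_p(N^{-p}),
\]
which contradicts $\etan = CN^{-2p/5}\gg N^{-p}$ with probability tending to one. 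Together these show $\xia\to_p\alpha = G_0^{-1}(0)$.

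If instead $G_0^{-1}(0)=-\infty$, the lower bound is vacuous, and the convergence $\xia\to_p G_0^{-1}(0)$ is to be interpreted as $\PP(\xia>-M)\to 0$ for every $M>0$; this follows from the inclusion $\{\xia>-M\}\subseteq\{G_0(\xia)>G_0(-M)\}$ together with $G_0(-M)>0$ (since $-M\in\iint(\supp(g_0))$) and $G_0(\xia)\to_p 0$. The argument for $\xib$ is entirely analogous, with $1-\etan$ in place of $\etan$ and $G_0^{-1}(1)$ in place of $G_0^{-1}(0)$. The only nuance worth flagging is this symbolic one about $G_0^{-1}$ at the endpoints $0$ and $1$ (viewed via the convention $G_0^{-1}(0)=\inf\supp(g_0)$, $G_0^{-1}(1)=\sup\supp(g_0)$), which is consistent with how the limit is used in the fourth step of the proof of Theorem~\ref{theorem: main theorem}; no deeper obstacle arises, as the whole argument is essentially a quantile-to-value transfer driven by the Hellinger rate against the slower decay of $\etan$.
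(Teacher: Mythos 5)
Your proof is correct, but it takes a genuinely different route from the paper's. The paper argues by contradiction: assuming, say, $\xib \not\to_p G_0^{-1}(1)$, it extracts a subsequence with $\liminf \PP(\xib < G_0^{-1}(1)-\epsilon) > 0$, rewrites $G_0^{-1}(1)-\epsilon$ as $G_0^{-1}(t)$ for some $t\in(1/2,1)$, and contradicts this by showing $\xib \geq \tilde G_{m,n}^{-1}((1+t)/2) \to_p G_0^{-1}((1+t)/2) > G_0^{-1}(t)$ via Fact~\ref{fact: consistency of the quantiles}; the unbounded-support case is handled separately, again by contradiction. Your argument is direct and more elementary: you first pin down $G_0(\xia) = \etan + O_p(N^{-p})\to_p 0$ using the same Hellinger-to-TV-to-Kolmogorov chain that already opens the proof of Lemma~\ref{lemma: xi: xi is in dop psi knot}, then transfer this to $\xia$ itself via monotonicity of $G_0$, treating the two cases $G_0^{-1}(0) > -\infty$ and $G_0^{-1}(0)=-\infty$. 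What yours buys is the avoidance of the subsequence-extraction machinery and of Fact~\ref{fact: consistency of the quantiles} (uniform convergence of $\tilde G_{m,n}$ plus quantile consistency); what the paper's version buys is that it only needs pointwise consistency of $\tilde G_{m,n}$ and its quantiles rather than the quantitative rate comparison $\etan \gg N^{-p}$, though that comparison is available and already used in Lemma~\ref{lemma: xi: xi is in dop psi knot}, so there is no real loss. Your lower-tail step in the finite-endpoint case --- bounding $\etan = \tilde G_{m,n}(\xia) \le \tilde G_{m,n}(\alpha-\epsilon) \le G_0(\alpha-\epsilon) + \sqrt2\,\H(\hn,g_0) = O_p(N^{-p})$ and noting this has vanishing probability because $\etan = CN^{-2p/5}$ decays slower --- is correct, as is your reading of the convention $G_0^{-1}(0)=\inf\supp(g_0)$ and $G_0^{-1}(1)=\sup\supp(g_0)$, which the paper uses implicitly in the third step of the proof of Theorem~\ref{theorem: main theorem}.
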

 \begin{proof}[Proof of Lemma~\ref{lemma: xi: xi goes to 1}]

We will show the proof for $\xib$ because the proof for $\xia$ will follow similarly. 
  Suppose, if possible, 
 $\xib\to_p G_0^{-1}(1)$ does not hold. We will consider two cases: (a) $G_0^{-1}(1)<\infty$ and (b) $G_0^{-1}(1)=\infty$.

 \textbf{Case (a):}
 Since  $\xib\to_p G_0^{-1}(1)$ does not hold, we can find an $\epsilon'>0$ and a subsequence 
 $\{m_k,n_k\}\subset \{m,n\}$ 
 so that 
 $\liminf_{k\to\infty} \PP(|\widehat{\xi}_{2,k}-G_0^{-1}(1)|>\epsilon)>0$ for all $\epsilon\leq \epsilon'$ where the quantile $\widehat{\xi}_{2,k}=\xib(m_k,n_k)$  corresponds to $(m_k,n_k)$. 
 Since Lemma~\ref{lemma: xi: xi is in dop psi knot} implies  $\PP(\widehat{\xi}_{2,k}\leq G_0^{-1}(1))\to 1$, it follows that 
 \[\liminf_{k\to\infty} \PP(\widehat{\xi}_{2,k}<G_0^{-1}(1)-\epsilon)>0.\]
 Now we show that we can choose $\e$ so small such that  $G_0^{-1}(t)=G_0^{-1}(1)-\epsilon$ for some $t\in[1/2,1)$.

  Let $t=G_0(G_0^{-1}(1)-\e)$.  First, we will show that $t\in(1/2,1)$ for sufficiently small $\epsilon>0$. 
 Since $g_0$ is log-concave, it is positive on $J(G_0)$,  and $G_0$ is strictly increasing on $J(G_0)$. Also, since $G_0$ is continuous, $G_0^{-1}$ is strictly increasing on $(0,1)$ by Fact \ref{fact: inverse common}. 
Therefore, $G_0^{-1}(1/2)<G_0^{-1}(1)$.   For sufficiently small $\epsilon>0$, $G_0^{-1}(1)-\e\in J(G_0)$. Moreover, we  can choose $\e$ so small such  that $G_0^{-1}(1)-\e\in(G_0^{-1}(1/2),G_0^{-1}(1))$,
  which implies $t=G_0(G_0^{-1}(1)-\e)>G_0(G_0^{-1}(1/2))\geq 1/2$. Here the first inequality follows from the strict monotonocity of $G_0$ on $J(G_0)$. 
The strict monotonocity of the continuous distribution function  $G_0$ also implies that   $t=G_0(G_0^{-1}(1)-\e)<G_0(G_0^{-1}(1))\leq 1$, where the last equality follows from Fact \ref{fact: inverse common} (i). Therefore, $t\in(1/2,1)$. 

Finally,
  \[G_0^{-1}(t)=G_0^{-1}\left(G_0(G_0^{-1}(1)-\e)\right)=G_0^{-1}(1)-\e\]
    by part (iii) of Fact \ref{fact: inverse common} since $G_0$ is strictly increasing on $J(G_0)$.
  Therefore, we have
 proved that  there exists $t\in(1/2,1)$ so that  $G_0^{-1}(t)=G_0^{-1}(1)-\epsilon$, and
 \begin{equation}
     \label{inlemma: xi in dom}
     \liminf_{N\to\infty} \PP(\widehat{\xi}_{2,k}< G_0^{-1}(t))>0.
 \end{equation}


 %

 However, because $\etan\to 0$,  $1-\etan\geq (1+t)/2$ for sufficiently large $n$, which yields
  $\widehat{\xi}_{2,k}\geq\tilde G_{m,n}^{-1}((1+t)/2)$.   Condition \ref{condition: hn basic} implies $\tilde G_{m,n}$  converges in probability to $G_0$ in total variation norm, which implies  $\tilde G_{m,n}$  uniformly converges to $G_0$ in probability. Therefore, by Fact~\ref{fact: consistency of the quantiles}, the quantile functions of $\tilde G_{m,n}$  converge pointwise in probability, leading to  $\tilde G_{m,n}^{-1}((1+t)/2)\to_ pG_0^{-1}((1+t)/2)$. However, $G_0^{-1}((1+t)/2)>G_0^{-1}(t)$, where the strict inequality follows because 
 $G_0^{-1}$ is strictly increasing on $(0,1)$. Thus  $\PP(\widehat{\xi}_{2,k}\geq G_0^{-1}(t))\to 1$, contradicting \eqref{inlemma: xi in dom}.  Therefore the proof follows by contradiction.

\textbf{Case (b):} Since  $\xib\to_p \infty$ does not hold,  there exists $M>0$ and a subsequence $(m_k,n_k)$ so that $\liminf_{k\to\infty} \PP(\widehat{\xi}_{2,k}<M)>0$. Let $\eta_k\equiv\eta_{m_k,n_k}$ denote the corresponding subsequence of $\etan$. Note that $\widehat{\xi}_{2,k}<M$ implies $\tilde G_{m,n}(\widehat{\xi}_{2,k})\leq \tilde G_{m,n}(M)$.  Since  $\tilde G_{m,n}$ has a density, it is continuous. Therefore  by an application of Fact \ref{fact: inverse common} (i), $\tilde G_{m,n}(\widehat{\xi}_{2,k})=1-\eta_{k}$. Hence, $\liminf_{k\to\infty} \PP(1-\eta_k<\tilde G_{m,n}(M))>0$. By definition of $\eta_k$, $1-\eta_k\to 1$ as $k\to\infty$. However,  $\tilde G_{m,n}(M)\to_p G_0(M)$ by Condition~\ref{condition: hn basic}, where $G_0(M)<1$ since  $G_0^{-1}(1)=\infty$. 
Thus  $\liminf_{k\to\infty} \PP(1-\eta_k<\tilde G_{m,n}(M))$ can not be positive. We have come to a contradiction, which concludes our proof. 
 
 \end{proof}
 
  \begin{lemma}\label{lemma: bound: xi n}
 Suppose $\hn\in\mathcal{LC}$ satisfies Condition~\ref{condition: hn basic} and Condition~\ref{cond: hellinger rate}.  Take $\etan=CN^{-2p/5}$ with $p$ as specified in Condition~\ref{cond: hellinger rate} and $C>0$. Then
 \[\xib\leq\frac{-\log 2+2p(\log N)/5}{\omega_{m, n}}+\tilde G_{m,n}^{-1}(1/2),\]
 \[\xia\geq-\frac{-\log 2+2p(\log N)/5}{\omega_{m, n}}+\tilde G_{m,n}^{-1}(1/2),\]
 where $\omega_{m, n}$ is as in Fact~\ref{fact: f grtr than F: base}. In particular,
 $|\xia|, |\xib|=O_p(\log N)$.
 \end{lemma}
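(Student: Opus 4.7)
The plan is to exploit the log-concavity of $\hn$ to obtain an exponential tail bound on $1-\tilde G_{m,n}$ (and symmetrically on $\tilde G_{m,n}$), and then invert this bound at the quantiles $\xib$ and $\xia$. Fact (f grtr than F: base), referenced in the statement, should provide precisely such a bound: for the log-concave density $\hn$ there exists a positive rate $\omega_{m,n}$ so that
\[
1-\tilde G_{m,n}(x)\leq \tfrac{1}{2}\exp\bigl(-\omega_{m,n}(x-\tilde G_{m,n}^{-1}(1/2))\bigr)
\]
for $x\geq \tilde G_{m,n}^{-1}(1/2)$, with the mirror bound on the lower tail.

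Next, I would apply this inequality at $x=\xib$. Since $\tilde G_{m,n}$ is continuous, Fact (inverse common) gives $\tilde G_{m,n}(\xib)=1-\etan$, so the tail bound becomes $\etan\leq \tfrac{1}{2}e^{-\omega_{m,n}(\xib-\tilde G_{m,n}^{-1}(1/2))}$. Taking logarithms and rearranging yields
\[
\xib-\tilde G_{m,n}^{-1}(1/2)\leq \frac{-\log 2-\log\etan}{\omega_{m,n}}=\frac{-\log 2+2p(\log N)/5 - \log C}{\omega_{m,n}},
\]
which delivers the first displayed inequality (the $-\log C$ term is either absorbed into the constant or is nonpositive when $C\geq 1$). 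The corresponding lower bound on $\xia$ follows by applying the analogous fact to the lower tail of $\tilde G_{m,n}$ at the quantile $\etan$.

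For the final claim that $|\xia|,|\xib|=O_p(\log N)$, I would combine the explicit bound above with two observations. First, $\tilde G_{m,n}^{-1}(1/2)\to_p G_0^{-1}(1/2)$, which is a finite real number: this follows from Condition \ref{condition: hn basic}(A) (which implies $\tilde G_{m,n}$ converges to $G_0$ in total variation, hence uniformly) together with Fact (consistency of the quantiles). Second, $\omega_{m,n}$ is bounded below in probability by a strictly positive constant, which is the content of Fact (f grtr than F: base) applied to the sequence $\hn$ converging to the log-concave $g_0$. Putting these two facts together with the explicit bound produces $|\xib|\leq O_p(1)+O(\log N)=O_p(\log N)$, and similarly for $|\xia|$.

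The main obstacle will be justifying the lower bound on $\omega_{m,n}$: the exponential decay rate of $\hn$ is data-dependent, and showing it does not degenerate requires leveraging the convergence of $\hn$ to $g_0$ (Conditions \ref{condition: hn basic} and \ref{cond: hellinger rate}) together with the structural stability of log-concave tails. In particular, one must ensure that the tail exponent extracted from the log-concave $\hn$ (typically via the slope of $\hln$ at a point slightly above the median) stays bounded away from zero with high probability; this should follow from the pointwise convergence of $\hln$ on compacta guaranteed by Condition \ref{condition: hn basic}(B), combined with the strict concavity of $\psi_0$ in a neighborhood above $G_0^{-1}(1/2)$.
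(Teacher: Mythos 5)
Your proposal is correct and takes essentially the same route as the paper: the paper converts $\xib-\tilde G_{m,n}^{-1}(1/2)$ into the integral $\int_{1/2}^{1-\etan}dz/\hn(\tilde G_{m,n}^{-1}(z))$ via Fact~\ref{fact: bobkov big} and then invokes the lower bound of Fact~\ref{fact: f grtr than F: base}, which is just the integrated form of the differential inequality $-(1-\tilde G_{m,n})'\geq \omega_{m,n}(1-\tilde G_{m,n})$ that you integrate directly to get the exponential tail bound, so the two derivations coincide after inverting at $1-\etan$. One small clarification regarding your closing ``obstacle'': no further argument is needed for the lower bound on $\omega_{m,n}$, since Fact~\ref{fact: f grtr than F: base} already asserts $\omega_{m,n}\to_p\omega_0>0$ as part of its statement; in particular no strict-concavity argument for $\psi_0$ near $G_0^{-1}(1/2)$ is needed (and strict concavity there fails, e.g., for the Laplace).
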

 
 \begin{proof}[Proof of Lemma~\ref{lemma: bound: xi n}]
 We will consider the case of $\xib$ first. Recall that since $\hn\in\mathcal{LC}$, we have $\hn(\tilde G_{m,n}^{-1}(z))>0$ if $z\in(0,1)$. Define  $\alpha_{m, n}=\tilde G_{m,n}^{-1}(1/2)$ and $\alpha_0=G_0^{-1}(1/2)$.  Fact \ref{fact: bobkov big} implies that 
 \begin{align*}
     \xib=& \left\{\tilde{G}_{m, n}^{-1}(1-\etan) -\tilde{G}_{m, n}^{-1}(1/2) \right\}+\tilde{G}_{m, n}^{-1}(1/2)\\
     =&\dint_{1/2}^{1-\etan}\frac{dz}{\hn(\tilde G_{m,n}^{-1}(z))}+\alpha_{m, n}\\
     \leq& \dint_{1/2}^{1-\etan}\frac{dz}{\omega_{m, n}[1-\tilde G_{m,n}(\tilde G_{m,n}^{-1}(z))]}+\alpha_{m, n},
 \end{align*}
 where the last step follows from Fact~\ref{fact: f grtr than F: base} and $\omega_{m, n}$ is as in Fact~\ref{fact: f grtr than F: base}.
Fact \ref{fact: inverse common} (i) implies that $\tilde G_{m,n}(\tilde G_{m,n}^{-1}(z))=z$ for $z\in (0, 1)$ since $\tilde G_{m,n}$ is continuous. Therefore \[\xib\leq \frac{\log(1/2)-\log(\etan)}{\omega_{m, n}} + \alpha_{m, n}=\frac{-\log 2+2p(\log N)/5}{\omega_{m, n}}+\alpha_{m, n}.\]
Finally Fact~\ref{fact: f grtr than F: base} implies $\omega_{m, n}\to_p \omega_0$, Fact~\ref{fact: consistency of the quantiles} implies $\alpha_{m, n}\to_p \alpha_0$ and therefore $\xib = O_p(\log N)$. 

In a similar manner, we can prove
\[\xia\geq-\frac{-\log 2+2p(\log N)/5}{\omega_{m, n}}+\tilde G_{m,n}^{-1}(1/2),\]
which would imply $-\xia=O_p(\log N)$.  Since $\xia\leq \xib$, it follows that $|\xia|$, $|\xib|=O_p(\log N)$.

 \end{proof}

  \begin{lemma}\label{lemma: xi: tilde xi n}
 Suppose the assumptions of Lemma~\ref{lemma: bound: xi n} holds. Define \[\tilde \xi_1 = \tilde G_{m,n}^{-1}(\etan/2)\;,\; \tilde \xi_2=\tilde G_{m,n}^{-1}(1-\etan/2).\]
 Consider a sequence of non-negative random variables, denoted as $v_{m, n}$, satisfying $\PP(v_{m, n}< \etan/(2\|g_0\|_\infty))\to 1$. Then 
  \begin{align}\label{inlemma: yn set inclusion}
   \PP([\xia-v_{m, n},\xib+v_{m, n}]\subset [\tilde{\xi}_1,\tilde{\xi}_2])\to 1.  
 \end{align}
 \end{lemma}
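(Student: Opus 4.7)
The plan is to establish the two containments separately. By the symmetry of the construction, I will focus on the left endpoint inequality $\tilde\xi_1\le \xia-v_{m,n}$; the right endpoint bound $\xib+v_{m,n}\le\tilde\xi_2$ will be handled by an identical argument. Since $\hn$ is a log-concave density, it is positive on $J(\tilde G_{m,n})$, so $\tilde G_{m,n}$ is continuous and strictly increasing on that set, making its quantile inverse strictly increasing on $(0,1)$ by Fact~\ref{fact: inverse common}. Consequently the desired inequality is equivalent to $\tilde G_{m,n}(\xia-v_{m,n})\ge \etan/2$, and after subtracting from $\tilde G_{m,n}(\xia)=\etan$, to the increment bound
\[
\tilde G_{m,n}(\xia)-\tilde G_{m,n}(\xia-v_{m,n})\le \etan/2.
\]

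The key step will be to transfer this increment from $\tilde G_{m,n}$ to $G_0$, whose density is bounded above by $\|g_0\|_\infty$. Adding and subtracting $G_0$ at both endpoints gives
\[
\tilde G_{m,n}(\xia)-\tilde G_{m,n}(\xia-v_{m,n})\le G_0(\xia)-G_0(\xia-v_{m,n})+2\sup_{x\in\RR}\bigl|\tilde G_{m,n}(x)-G_0(x)\bigr|.
\]
The first summand is at most $\|g_0\|_\infty v_{m,n}$, which is strictly smaller than $\etan/2$ on the event $\{v_{m,n}<\etan/(2\|g_0\|_\infty)\}$ supplied by the hypothesis. For the uniform discrepancy, Fact~\ref{fact: dTV and hellinger} yields $\sup_x|\tilde G_{m,n}-G_0|\le d_{TV}(\tilde G_{m,n},G_0)\le\sqrt{2}\,\H(\hn,g_0)$, which is $O_p(N^{-p})$ by Condition~\ref{cond: hellinger rate}. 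Since $\etan\asymp N^{-2p/5}$ and $2p/5<p$ for any $p>0$, we have $N^{-p}=o(\etan)$, so this remainder is $o_p(\etan)$. A symmetric calculation at the upper quantile, now bounding $\tilde G_{m,n}(\xib+v_{m,n})-\tilde G_{m,n}(\xib)$ from above by $\etan/2$, yields the right-endpoint inclusion.

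The main subtle point I anticipate is that the $o_p(\etan)$ Hellinger remainder needs to be strictly dominated by the positive gap $\etan/2-\|g_0\|_\infty v_{m,n}$ produced by the hypothesis, not merely by $\etan/2$. In the intended applications this will not be a problem because $v_{m,n}$ will arise from $\sqrt{N}$-consistent preliminary estimators, so that $v_{m,n}=O_p(N^{-1/2})=o_p(\etan)$; the gap is then of order $\etan$ and easily absorbs the Hellinger remainder with probability tending to one, completing the proof.
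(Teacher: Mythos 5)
Your reduction of the left-endpoint containment to the increment inequality $\tilde G_{m,n}(\xia)-\tilde G_{m,n}(\xia-v_{m,n})\le\etan/2$, and your transfer of that increment to $G_0$ through the Hellinger-controlled uniform error $\sup_x|\tilde G_{m,n}(x)-G_0(x)|\le\sqrt{2}\,\H(\hn,g_0)=O_p(N^{-p})$, are both correct and in the same spirit as the adaptation of Lemma~B.5 of \cite{laha2021adaptive} that the paper cites (the very form of the hypothesis, with the constant $2\|g_0\|_\infty$, is clearly built to pair with your crude mean-value bound $G_0(\xia)-G_0(\xia-v_{m,n})\le\|g_0\|_\infty v_{m,n}$). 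However, the subtlety you flag at the end is a genuine gap, and your proposed resolution does not close it for the lemma \emph{as stated}. The lemma is invoked in Lemma~\ref{lemma: tilde psi prime bound} under exactly the stated hypothesis, with no further assumption that $v_{m,n}=o_p(\etan)$; so $v_{m,n}$ may remain comparable to $\etan$, the slack $\etan/2-\|g_0\|_\infty v_{m,n}$ may be arbitrarily small relative to $\etan$, and the $o_p(\etan)$ Hellinger remainder need not fit inside it. Specializing to the $\sqrt N$-consistent case therefore proves a strictly narrower statement than the one needed.

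A clean repair keeps the hypothesis as is but sharpens the mean-value step rather than the hypothesis. By Lemma~\ref{lemma: xi: xi goes to 1}, $\xia\to_p G_0^{-1}(0)$; by Lemma~\ref{lemma: xi: xi is in dop psi knot}, with probability tending to one $\xia$ lies to the left of the mode of $g_0$ and in $\iint(\dom(\psi_0))$; and $v_{m,n}\to_p 0$ from the hypothesis since $\etan\to 0$. Because $g_0\in\mP_0$ is absolutely continuous (hence continuous) and vanishes off $[G_0^{-1}(0),G_0^{-1}(1)]$, the density $g_0$ tends to zero as its argument approaches $G_0^{-1}(0)$ (this covers both $G_0^{-1}(0)>-\infty$ and $G_0^{-1}(0)=-\infty$, where one instead uses the exponential tail decay of log-concave densities). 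By unimodality, $\sup_{x\in[\xia-v_{m,n},\xia]}g_0(x)=g_0(\xia)\to_p 0$, not merely $\le\|g_0\|_\infty$. Hence $G_0(\xia)-G_0(\xia-v_{m,n})\le v_{m,n}\,g_0(\xia)=o_p(\etan)$, and combining with the $O_p(N^{-p})=o_p(\etan)$ CDF error yields $\tilde G_{m,n}(\xia)-\tilde G_{m,n}(\xia-v_{m,n})=o_p(\etan)\le\etan/2$ with probability tending to one, with no extra assumption on $v_{m,n}$. The right-endpoint argument is symmetric. Your overall architecture is right; only the final absorption step needs this sharpening to cover the lemma's full stated scope.
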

 
 \begin{proof}[Proof of Lemma~\ref{lemma: xi: tilde xi n}]
 The proof follows from the proof of Lemma B.5 in \cite{laha2021adaptive} when $\xin$ is replaced with $\xib$, $-\xin$ is replaced with $\xia$ and $g_0(0)$ is replaced with $\lVert g_0\rVert_{\infty}$ in the original proof.
 
 \end{proof}

 \begin{lemma}\label{lemma: An inclusion}
  Under the assumptions of Lemma~\ref{lemma: bound: xi n},
   \[\PP\slb[\xia-|v_{m, n}|,\xib+|v_{m, n}|]\subset \mathrm{int}(\dom(\psi_0))\srb\to 1,\]
   provided $v_{m, n}=o_p(\etan)$.
 \end{lemma}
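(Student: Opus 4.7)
The plan is to sandwich the slightly enlarged interval $[\xia - |v_{m,n}|, \xib + |v_{m,n}|]$ inside $[\tilde{\xi}_1, \tilde{\xi}_2]$ via Lemma \ref{lemma: xi: tilde xi n}, and then verify that even this larger interval lies in $\iint(\dom(\psi_0))$ with probability tending to one by a direct repetition of the argument of Lemma \ref{lemma: xi: xi is in dop psi knot}, but with the truncation level $\etan$ replaced by $\etan/2$.

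First I would apply Lemma \ref{lemma: xi: tilde xi n} to the non-negative sequence $|v_{m,n}|$. To verify its hypothesis, note that $\|g_0\|_\infty < \infty$ by Fact \ref{fact: Lemma 1 of theory paper} (log-concave densities are bounded), so that $\etan/(2\|g_0\|_\infty)$ is a fixed positive multiple of $\etan$. The assumption $v_{m,n} = o_p(\etan)$ then gives $|v_{m,n}|/\etan \to_p 0$, hence $\PP(|v_{m,n}| < \etan/(2\|g_0\|_\infty)) \to 1$, and Lemma \ref{lemma: xi: tilde xi n} yields
\[
\PP\slb [\xia - |v_{m,n}|, \xib + |v_{m,n}|] \subset [\tilde{\xi}_1, \tilde{\xi}_2] \srb \to 1.
\]

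Next, I would establish that $\PP([\tilde{\xi}_1, \tilde{\xi}_2] \subset \iint(\dom(\psi_0))) \to 1$. The argument runs exactly as in Lemma \ref{lemma: xi: xi is in dop psi knot}: by Fact \ref{fact: dTV and hellinger}, $|G_0(\tilde{\xi}_i) - \tilde G_{m,n}(\tilde{\xi}_i)| \leq \sqrt{2}\,\H(\hn, g_0)$, and since $\tilde G_{m,n}$ is continuous, $\tilde G_{m,n}(\tilde{\xi}_1) = \etan/2$ and $\tilde G_{m,n}(\tilde{\xi}_2) = 1 - \etan/2$. Combining yields $G_0(\tilde{\xi}_1) \geq \etan/2 - \sqrt{2}\,\H(\hn, g_0)$ and $G_0(\tilde{\xi}_2) \leq 1 - \etan/2 + \sqrt{2}\,\H(\hn, g_0)$. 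Because Condition \ref{cond: hellinger rate} gives $\H(\hn, g_0) = O_p(N^{-p})$, whereas $\etan/2 = (C/2) N^{-2p/5}$, and $2p/5 < p$ for $p \in (0, 1/2]$, we have $\etan/2 \gg N^{-p}$. Hence $\PP(G_0(\tilde{\xi}_1) > 0) \to 1$ and $\PP(G_0(\tilde{\xi}_2) < 1) \to 1$, placing both $\tilde{\xi}_1, \tilde{\xi}_2$ in $\iint(J(G_0))$ with probability tending to one. As noted in the discussion preceding Section \ref{sec: lemmas on xi n}, $\iint(J(G_0)) = \iint(\dom(\psi_0))$ for log-concave $g_0$.

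Finally, since $\iint(\dom(\psi_0))$ is an interval (as $\psi_0$ is a proper concave function), having $\tilde{\xi}_1, \tilde{\xi}_2 \in \iint(\dom(\psi_0))$ forces the entire closed interval $[\tilde{\xi}_1, \tilde{\xi}_2]$ to lie there. Intersecting the two high-probability events from the previous two steps and applying the union bound completes the proof. There is no genuine obstacle: the argument is essentially bookkeeping that combines Lemma \ref{lemma: xi: tilde xi n} with a slight strengthening of Lemma \ref{lemma: xi: xi is in dop psi knot}, the only point requiring mild care being that halving the truncation level preserves the rate comparison $\etan/2 \gg \H(\hn, g_0)$.
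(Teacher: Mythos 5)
Your proposal is correct and follows essentially the same route as the paper: the paper also defines $\tilde\xi_1 = \tilde G_{m,n}^{-1}(\etan/2)$, $\tilde\xi_2 = \tilde G_{m,n}^{-1}(1-\etan/2)$, invokes Lemma \ref{lemma: xi: xi is in dop psi knot} (directly, with $\etan$ replaced by $\etan/2$, which remains of the form $C'N^{-2p/5}$) to place $[\tilde\xi_1,\tilde\xi_2]$ inside $\iint(\dom(\psi_0))$ with high probability, and then applies Lemma \ref{lemma: xi: tilde xi n} using $v_{m,n}=o_p(\etan)$. The only cosmetic difference is that you re-derive the content of Lemma \ref{lemma: xi: xi is in dop psi knot} at the halved truncation level rather than simply citing it with a rescaled constant.
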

 
 \begin{proof}[Proof of Lemma~\ref{lemma: An inclusion}]

 Define $\tilde \xi_1=\tilde G_{m,n}^{-1}(\frac{\etan}{2}), \tilde \xi_2=\tilde G_{m,n}^{-1}(1-\frac{\etan}{2})$. By Lemma~\ref{lemma: xi: xi is in dop psi knot}, \[\PP\left([\tilde \xi_1, \tilde \xi_2]\subset \iint(\dom(\psi_0))\right)\to 1.\] 
 Since $v_{m, n}=o_p(\etan)$, the proof follows by invoking Lemma~\ref{lemma: xi: tilde xi n}.
 \end{proof}

 \subsubsection{\textbf{lemmas on  $\hn$ and $g_0$:}}
 \label{sec: lemmas on hn and g knot}
  As usual, we let  $\xia=\tilde G_{m, n}^{-1}(\etan), \xib=\tilde G_{m, n}^{-1}(1-\etan)$ for the lemmas in the current section. 
 \begin{lemma}
 \label{lemma: hn: lower bound  on hn}
 Suppose $\hn\in\mathcal {LC}$ is a density satisfying Condition~\ref{condition: hn basic} and and the corresponding distribution function is $\tilde G_{m, n}$. Then
 \begin{itemize}[topsep=0pt]
     \item [A.] $\sup_{x\in[\xia,\xib]}\hn(x)=O_p(1)$ and $\sup_{x\in[\xia,\xib]}\hn(x)^{-1}=O_p(\etan^{-1})$,
     \item[B.] If $\hln=\log\hn$, then $\sup_{x\in[\xia, \xib]}\hln(x)=O_p(1)$. In particular, when $\etan=CN^{-2p/5}$ with $p\in(0,1)$ and $C>0$, we have  $\sup_{x\in[\xia, \xib]}(-\hln(x))$ is $O_p(\log N)$.
 \end{itemize}
 \end{lemma}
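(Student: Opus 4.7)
The plan is to derive Part A first, from which Part B will follow almost immediately by taking logarithms.

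For the upper bound in Part A, I would bound $\sup_{x \in [\xia, \xib]} \hn(x)$ by the global supremum $\|\hn\|_\infty$. By Condition~\ref{condition: hn basic}(A), $\|\hn - g_0\|_1 \to_p 0$, which in particular implies weak convergence of the associated distributions. Since both $\hn$ and $g_0$ lie in $\mathcal{LC}$, a subsequence argument combined with Proposition~2 of \cite{cule2010} (which upgrades weak convergence to uniform convergence within the log-concave class) yields $\|\hn\|_\infty \to_p \|g_0\|_\infty$. By Fact~\ref{fact: Lemma 1 of theory paper}, $\|g_0\|_\infty < \infty$, so $\sup_{x \in [\xia, \xib]} \hn(x) = O_p(1)$.

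For the lower bound in Part A, I would reuse Fact~\ref{fact: f grtr than F: base} (the same fact that drove the proof of Lemma~\ref{lemma: bound: xi n}). That fact supplies a lower bound of the form $\hn(x) \geq \omega_{m,n}\min\bigl(\tilde G_{m,n}(x),\, 1 - \tilde G_{m,n}(x)\bigr)$ for $x \in J(\tilde G_{m,n})$, where $\omega_{m,n} \to_p \omega_0 > 0$. Since $\tilde G_{m,n}$ is continuous, Fact~\ref{fact: inverse common}(i) gives $\tilde G_{m,n}(\xia) = \etan$ and $\tilde G_{m,n}(\xib) = 1 - \etan$, so for every $x \in [\xia, \xib]$ we have $\tilde G_{m,n}(x) \in [\etan, 1 - \etan]$ and hence $\min(\tilde G_{m,n}(x),\, 1 - \tilde G_{m,n}(x)) \geq \etan$. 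Consequently $\hn(x) \geq \omega_{m,n}\etan$ uniformly on $[\xia, \xib]$, yielding $\sup_{x \in [\xia, \xib]} \hn(x)^{-1} \leq (\omega_{m,n}\etan)^{-1} = O_p(\etan^{-1})$.

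Part B then follows from Part A by taking logarithms. The upper bound in Part A combined with the continuous mapping theorem gives $\sup_{x \in [\xia, \xib]} \hln(x) \leq \log \|\hn\|_\infty \to_p \log \|g_0\|_\infty$, which is finite, hence $\sup_{x \in [\xia, \xib]} \hln(x) = O_p(1)$. For the second statement, the uniform lower bound gives $\sup_{x \in [\xia, \xib]} (-\hln(x)) = -\log \inf_{x \in [\xia, \xib]} \hn(x) \leq -\log(\omega_{m,n}\etan)$, and with $\etan = CN^{-2p/5}$ this equals $-\log \omega_{m,n} - \log C + (2p/5)\log N$, which is $O_p(\log N)$ because $-\log \omega_{m,n} = O_p(1)$.

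The main (minor) obstacle is the clean justification of $\|\hn\|_\infty \to_p \|g_0\|_\infty$; this relies on $\hn$ being log-concave and is a standard ingredient already invoked in Step~4 of Section~\ref{sec: proof of the conditions} for the pooled--smoothed estimator, so no new machinery is needed. The rest is arithmetic, given the quantile-function bounds assembled in Lemma~\ref{lemma: bound: xi n} and the two facts cited above.
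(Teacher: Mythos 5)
Your argument is correct and is essentially the same as the paper's: the paper simply defers to Lemma B.7 of \cite{laha2021adaptive}, whose proof rests on exactly the ingredients you assemble — the lower bound $\hn(x)\geq \omega_{m,n}\min(\tilde G_{m,n}(x),1-\tilde G_{m,n}(x))$ from Fact~\ref{fact: f grtr than F: base} together with the quantile identities $\tilde G_{m,n}(\xia)=\etan$, $\tilde G_{m,n}(\xib)=1-\etan$, and the uniform convergence of log-concave densities for the upper bound. Your write-out is a valid self-contained substitute for that citation.
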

 
 \begin{proof}[Proof of Lemma~\ref{lemma: hn: lower bound  on hn}]
 This lemma directly follows from Lemma B.7 of \cite{laha2021adaptive}, replacing $\xin$ with $\xib$ and $-\xin$ with $\xia$.
 \end{proof}

 \subsubsection{\textbf{Lemmas on $g_0$:}}
 \begin{lemma}
 \label{lemma: g:  bound on psi}
 Let $g_0\in\mathcal{LC}$ be a density satisfying Assumption~\ref{assump: L} and $\kappa$ is as in Assumption~\ref{assump: L}. Then there exist constants $C_1\equiv C_1(g_0)$ and $\kappa_1\equiv \kappa_1(\kappa, g_0)$ such that
 \[|\psi_0(x)|\leq C_1+\kappa_1 x^2\]
 for any $x\in\iint(\dom(\psi_0))$.
 
 Further, under the assumptions of Lemma~\ref{lemma: bound: xi n}, for $\etan=CN^{-p}$ with $p\in(0,1)$ and $C>0$ and $\xia=\tilde G_{m,n}^{-1}(\etan), \xib=\tilde G_{m,n}^{-1}(1-\etan)$, we have
 \[\sup_{x\in[\xia, \xib]}|\psi_0(x)|=O_p((\log N)^2).\]
 \end{lemma}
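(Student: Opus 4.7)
The plan is to first derive the quadratic bound on $|\psi_0|$ in part 1 by leveraging Assumption \ref{assump: L}, then obtain the stochastic bound in part 2 as a direct consequence of the order estimates on $\xia,\xib$ provided by Lemma \ref{lemma: bound: xi n}.

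For part 1, I will fix an anchor point $x_0\in\iint(\dom(\psi_0))$; a convenient choice is $x_0=0$. To justify that $0\in\iint(\dom(\psi_0))$, observe that $g_0\in\mP_0$ satisfies $\int x\,g_0(x)\,dx=0$ and, by log-concavity, $\supp(g_0)=\dom(\psi_0)$ is an interval. If $\supp(g_0)\subset[0,\infty)$ then $\int x\,g_0(x)\,dx=0$ together with $x\geq 0$ on the support would force $g_0\equiv 0$ a.e., contradicting that $g_0$ is a density; a symmetric argument rules out $\supp(g_0)\subset(-\infty,0]$. With the anchor in hand, Assumption \ref{assump: L} yields $|\psi_0'(x)|\leq |\psi_0'(x_0)|+\kappa|x-x_0|$ for every $x\in\iint(\dom(\psi_0))$, where $\psi_0'$ denotes either one-sided derivative. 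Since $\psi_0$ is concave with one-sided derivatives bounded on any bounded sub-interval of $\iint(\dom(\psi_0))$, $\psi_0$ is absolutely continuous on such sub-intervals, and the fundamental theorem of calculus gives
\begin{equation*}
|\psi_0(x)-\psi_0(x_0)|\leq\left|\int_{x_0}^{x}|\psi_0'(t)|\,dt\right|\leq |\psi_0'(x_0)|\cdot|x-x_0|+\frac{\kappa}{2}(x-x_0)^2.
\end{equation*}
Using the crude bounds $|x-x_0|\leq 1+(x-x_0)^2$ and $(x-x_0)^2\leq 2x^2+2x_0^2$, the right-hand side can be repackaged as $C_1+\kappa_1 x^2$, where $C_1=C_1(g_0)$ and $\kappa_1=\kappa_1(\kappa,g_0)$ depend only on $\psi_0(x_0)$, $\psi_0'(x_0)$, $x_0$, and $\kappa$. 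This proves the first assertion.

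For part 2, Lemma \ref{lemma: xi: xi is in dop psi knot} (whose hypotheses are inherited from the setup of Lemma \ref{lemma: bound: xi n}) gives $\PP\bigl([\xia,\xib]\subset\iint(\dom(\psi_0))\bigr)\to 1$, so on this high-probability event part 1 yields $\sup_{x\in[\xia,\xib]}|\psi_0(x)|\leq C_1+\kappa_1\max(\xia^2,\xib^2)$. Lemma \ref{lemma: bound: xi n} supplies $|\xia|,|\xib|=O_p(\log N)$, and combining the two gives $\sup_{x\in[\xia,\xib]}|\psi_0(x)|=O_p((\log N)^2)$.

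The only point that requires a moment's care is verifying absolute continuity of $\psi_0$ on compact sub-intervals of $\iint(\dom(\psi_0))$ so that the fundamental theorem of calculus applies. This is immediate once one observes that concavity of $\psi_0$ guarantees the existence and monotonicity of its one-sided derivatives on the interior, while Assumption \ref{assump: L} keeps those derivatives bounded on any fixed bounded subset of $\iint(\dom(\psi_0))$. Everything else in the argument reduces to elementary inequalities, making this lemma essentially a direct corollary of Assumption \ref{assump: L} combined with the already-established logarithmic control of $\xia$ and $\xib$.
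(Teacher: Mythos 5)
Your proof is correct and follows essentially the same route as the paper's: anchor at an interior point, invoke Assumption~\ref{assump: L} to control $\psi_0'$ linearly in the distance from the anchor, and pass to a quadratic bound on $\psi_0$; then combine with the $O_p(\log N)$ bounds on $\xia,\xib$ from Lemma~\ref{lemma: bound: xi n}. The one substantive difference is in how the quadratic bound is extracted. You integrate the derivative bound $|\psi_0'(t)|\le|\psi_0'(x_0)|+\kappa|t-x_0|$ via the fundamental theorem of calculus, which directly yields a two-sided bound on $|\psi_0(x)-\psi_0(x_0)|$ in a single step. The paper instead proves the one-sided bound $-\psi_0(x)\le-\psi_0(c)-\psi_0'(c)(x-c)+\tfrac{\kappa}{2}(x-c)^2$ via a convexity descent-type inequality (citing Hiriart-Urruty and Lemar\'echal), and separately invokes Fact~\ref{fact: Lemma 1 of theory paper} to obtain the complementary upper bound $\psi_0(x)\le C$. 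Your version is marginally more self-contained since it does not need the log-concavity boundedness fact. Your specific choice $x_0=0$ and the argument that $0\in\iint(\dom(\psi_0))$ (from the centering $\int x\,g_0\,dx=0$) are correct but unnecessary: the paper simply fixes an arbitrary $c\in\iint(\dom(\psi_0))$, which is nonempty because $\dom(\psi_0)$ is an interval of positive measure, and this sidesteps any appeal to the zero-mean constraint. Part~2 is handled identically in both arguments.
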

 
 \begin{proof}[Proof of Lemma~\ref{lemma: g:  bound on psi}]
 Since $g_0$ is log-concave, it is bounded above (see Fact \ref{fact: Lemma 1 of theory paper}). Therefore, there exists a constant $C$ depending on $g_0$ such that $\psi_0(x)\leq C$.
 Therefore, we just need to bound $-\psi_0(x)$ for $x\in\iint(\dom(\psi_0))$. Note that since $\psi_0$ is concave, $-\psi_0$ is convex. Assumption \ref{assump: L} implies that the directional derivatives of this convex function are Lipschitzian with constant $\kappa$. Fix $c\in \iint(\dom(\psi_0))$. Let $-\psi_0'(c)$ be the right derivative of $-\psi_0'$ at $c$, which exists due to the convexity of $-\psi_0$. Note that $-\psi_0'(c)$ is also a subgradient of  $-\psi_0$ at $c$ where the subgradient of a convex function is as defined in page 167 of \cite{hiriart2004}.  Therefore, using some algebraic manipulation, it can be shown that \citep[cf. pp. 241][]{hiriart2004}
 \[-\psi_0(x)\leq -\psi_0(c)-\psi_0'(c)(x-c)+\frac{\kappa}{2}(c-x)^2.\]
 Notice that $\psi_0(c)>-\infty$ because $c\in\dom(\psi_0)$.
The definition of $\mathcal C$ implies $\psi_0\in\mathcal C$ is proper. Therefore $\psi_0'(c)$ is finite for any $c\in\iint(\dom(\psi_0))$ by Theorem 23.4, pp. 271, of \cite{rockafellar}. 
 Observe that
 \[-\psi_0'(c)(x-c)\leq |\psi_0'(c)|\max\{|x-c|^2,1\}.\]
 Therefore, it follows that
 \[-\psi_0(x)\leq -\psi_0(c)+|\psi_0'(c)|+ (\kappa/2+ |\psi_0'(c)|)|x-c|^2.\]
 Hence, $|\psi_0(x)|\leq C_1+\kappa_1 x^2$ follows if we take 
\[C_1=\max\{C, -\psi_0(c)+|\psi_0'(c)|\}\quad\text{and}\quad\kappa_1=\kappa/2+ |\psi_0'(c)|.\]
 

 Now we note that $\PP([\xia, \xib] \subset \dom(\psi_0))\to 1$ by Lemma~\ref{lemma: xi: xi is in dop psi knot} and $|\xia|, |\xib|$ are $O_p(\log N)$ by Lemma~\ref{lemma: bound: xi n}. Hence, the rest of the proof follows.
 \end{proof}

 \begin{lemma}\label{lemma: bound: psi knot prime}
 Consider the setup of Lemma \ref{lemma: g:  bound on psi}.
Suppose $\etan$ is as chosen in Lemma~\ref{lemma: g: bound on psi} and $v_{m, n}$ is a sequence of non-negative random variables satisfying $v_{m, n}=o_p(\etan)$. Then the following holds:
 \[\sup_{x\in[\xia-v_{m, n},\xib+v_{m, n}]}|\psi_0'(x)|=O_p(\log(N)).\]
 \end{lemma}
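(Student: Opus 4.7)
My plan is to exploit three ingredients: (i) Lemma~\ref{lemma: An inclusion}, which guarantees with high probability that the expanded interval $[\xia-v_{m,n},\xib+v_{m,n}]$ lies inside $\iint(\dom(\psi_0))$, where Assumption~\ref{assump: L} is in force; (ii) the global Lipschitz bound on $\psi_0'$ from Assumption~\ref{assump: L}; and (iii) the tail control $|\xia|,|\xib|=O_p(\log N)$ established in Lemma~\ref{lemma: bound: xi n}. Since $v_{m,n}=o_p(\etan)$ and $\etan=CN^{-p}\to 0$, Lemma~\ref{lemma: An inclusion} applies and yields an event $\mathcal E_{m,n}$ with $\PP(\mathcal E_{m,n})\to 1$ on which $[\xia-v_{m,n},\xib+v_{m,n}]\subset \iint(\dom(\psi_0))$; it therefore suffices to establish the $O_p(\log N)$ bound on $\mathcal E_{m,n}$.

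On $\mathcal E_{m,n}$, fix any reference point $c\in\iint(\dom(\psi_0))$. Since $\psi_0\in\mathcal C$ is proper and concave, $|\psi_0'(c)|$ is finite by Theorem~23.4 of \cite{rockafellar}. Applying Assumption~\ref{assump: L} with $y=c$ gives, for every $x\in[\xia-v_{m,n},\xib+v_{m,n}]$, the pointwise bound $|\psi_0'(x)|\leq |\psi_0'(c)|+\kappa|x-c|$. Taking the supremum yields
\[
\sup_{x\in[\xia-v_{m,n},\xib+v_{m,n}]}|\psi_0'(x)|\ \leq\ |\psi_0'(c)|\ +\ \kappa\,\max\bigl\{|\xia-v_{m,n}-c|,\ |\xib+v_{m,n}-c|\bigr\}.
\]

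To finish, I combine the two probabilistic inputs. By Lemma~\ref{lemma: bound: xi n}, $|\xia|=O_p(\log N)$ and $|\xib|=O_p(\log N)$, while $v_{m,n}=o_p(\etan)=o_p(1)$ by hypothesis. Hence, by the triangle inequality, $\max\{|\xia-v_{m,n}-c|,\ |\xib+v_{m,n}-c|\}=O_p(\log N)$, so the right-hand side displayed above is $O_p(\log N)$. Combining with $\PP(\mathcal E_{m,n})\to 1$ gives the stated conclusion.

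The argument is essentially routine once the prior lemmas are available; the only subtle point is that Assumption~\ref{assump: L} is a Lipschitz estimate valid only on the interior of $\dom(\psi_0)$, so the proof genuinely requires the random interval to sit inside that interior with high probability, which is precisely what Lemma~\ref{lemma: An inclusion} supplies. I do not anticipate any technical obstacle beyond verifying that the slack $v_{m,n}$ is small enough to invoke Lemma~\ref{lemma: An inclusion}, which is immediate from the hypothesis $v_{m,n}=o_p(\etan)$.
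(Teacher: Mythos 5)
Your proposal is correct and follows essentially the same route as the paper: invoke Lemma~\ref{lemma: An inclusion} to place the expanded interval inside $\iint(\dom(\psi_0))$ with probability tending to one, apply the Lipschitz bound of Assumption~\ref{assump: L} relative to a fixed $c$ with $|\psi_0'(c)|<\infty$, and conclude via $|\xia|,|\xib|=O_p(\log N)$ from Lemma~\ref{lemma: bound: xi n} and $v_{m,n}=o_p(1)$. The only cosmetic difference is that the paper first reduces the supremum to the two endpoints using concavity of $\psi_0$, whereas you bound $|\psi_0'(x)|$ pointwise throughout the interval; both are valid.
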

 \begin{proof}[Proof of Lemma~\ref{lemma: bound: psi knot prime}]
 
 First note that $\psi_0$ is concave and therefore the maxima of $|\psi_0'|$ over any interval is attained at either endpoints. Let $c\in\iint(\dom(\psi_0))$.
  Using Lemma~\ref{lemma: An inclusion}, one gets that the probability of $[\xia-v_{m, n},\xib+v_{m, n}]\subset\dom(\psi_0)$ approaches one.  Hence,  Assumption~\ref{assump: L} implies that
 \begin{align}\label{inlemma: lemma bound psi knot prime}
 \begin{split}
     |\psi_0'(\xia-v_{m, n})|\leq |\psi_0'(c)|+\kappa(|\xia|+v_{m, n}+|c|),\\
     |\psi_0'(\xib+v_{m, n})|\leq |\psi_0'(c)|+\kappa(|\xib|+v_{m, n}+|c|)
 \end{split}
 \end{align}
 with probability approaching one. Since $\psi_0\in\mathcal C$ is proper and $c\in\iint(\dom(\psi_0))$, $|\psi_0'(c)|<\infty$ by Theorem 23.4 of \cite{rockafellar}.
 From Lemma~\ref{lemma: bound: xi n}, we get $|\xia|, |\xib| = O_p(\log N)$ and by our assumption on $v_{m,n}$, $|v_{m, n}| = o_p(1)$. Hence $|\psi_0'(\xia-v_{m, n})|, |\psi_0'(\xib+v_{m, n})|$ are $O_p(\log N)$ and the proof follows.
 \end{proof}

 \subsubsection{\textbf{Lemmas on $\hln$:}}
 \begin{lemma}\label{lemma: Laha_Nilanjana 31}
Let $\hn$ be a density satisfying satisfying Conditions~\ref{condition: hn basic} and \ref{cond: hellinger rate} and $p\in(0, 1)$ be as specified in Condition~\ref{cond: hellinger rate}. Let $a_{m, n}, c_{m, n}$ be sequences such that
\begin{equation}\label{inlemma: statement: os: nilanjana 31}
    \sup_{x\in[a_{m,n}, c_{m,n}]} (|\psi_0(x)|+|\hln(x)|)=O_p((\log N)^2).
\end{equation}
Then
\begin{align*}
    \dint_{a_{m,n}}^{c_{m,n}} (\hln(x)-\psi_0(x))^2 g_0(x)dx=&O_p((\log N)^cN^{-2p}),\\
     \dint_{a_{m,n}}^{c_{m,n}} (\hln(x)-\psi_0(x))^2 \hn(x)dx=&O_p((\log N)^c N^{-2p}),
\end{align*}
for some absolute constant $c>0$.

In particular, if  $\hn\in\mathcal{LC}$ and $a_{m,n}=\xia(\tilde G_{m,n})=\tilde G_{m,n}^{-1}(\etan)$, $c_{m,n}=\xib(\tilde G_{m,n})=\tilde G_{m,n}^{-1}(1-\etan)$ with $\etan=CN^{-2p/5}$ for some $C>0$,  then \eqref{inlemma: statement: os: nilanjana 31} holds.
 \end{lemma}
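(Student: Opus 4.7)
The plan is to reduce the two integrals to the Hellinger distance $\H(\hn,g_0)$, which is already controlled by Condition~\ref{cond: hellinger rate}, at the cost of a polylogarithmic factor arising from the sup bound on $|\hln-\psi_0|$. First, by the hypothesis \eqref{inlemma: statement: os: nilanjana 31} together with the triangle inequality, on the interval $[a_{m,n},c_{m,n}]$ both $\hn$ and $g_0$ are strictly positive (so $\hln$ and $\psi_0$ are well-defined), and
\[
\sup_{x\in[a_{m,n},c_{m,n}]}|\hln(x)-\psi_0(x)|\le M_N,\qquad M_N=O_p\bigl((\log N)^2\bigr).
\]

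The key pointwise device is the following elementary inequality: for every $M\ge 1$ and every $t\in[-M,M]$,
\[
t^2\le C\,M^2\,(e^t-1)^2
\]
with an absolute constant $C>0$. This is verified by noting that on $[-M,M]$ the function $t\mapsto (e^t-1)/t$ attains its minimum modulus near $t=-M$, where $|(e^{-M}-1)/(-M)|=(1-e^{-M})/M\ge 1/(2M)$ for $M$ large enough, while for $t\ge 0$ we trivially have $e^t-1\ge t$. Applying this inequality with $t=(\hln(x)-\psi_0(x))/2$ (so $|t|\le M_N/2$) and using the identity $(\sqrt{\hn(x)/g_0(x)}-1)^2 g_0(x)=(\sqrt{\hn(x)}-\sqrt{g_0(x)})^2$ yields the pointwise bound
\[
g_0(x)\,\bigl(\hln(x)-\psi_0(x)\bigr)^2\le 4C\,M_N^{\,2}\,\bigl(\sqrt{\hn(x)}-\sqrt{g_0(x)}\bigr)^2
\]
on $[a_{m,n},c_{m,n}]$. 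Integrating over this interval and invoking Condition~\ref{cond: hellinger rate} gives
\[
\int_{a_{m,n}}^{c_{m,n}}(\hln-\psi_0)^2 g_0\,dx\ \le\ 4C\,M_N^{\,2}\cdot 2\H^2(\hn,g_0)\ =\ O_p\bigl((\log N)^4\,N^{-2p}\bigr),
\]
which is the first claim with $c=4$. For the second integral we repeat the same argument but apply the pointwise inequality with $t=(\psi_0(x)-\hln(x))/2$ and use $(\sqrt{g_0/\hn}-1)^2\hn=(\sqrt{g_0}-\sqrt{\hn})^2$; by symmetry of the Hellinger distance this produces the same $O_p\bigl((\log N)^4 N^{-2p}\bigr)$ bound.

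For the ``in particular'' statement, one checks that when $\hn\in\mathcal{LC}$ and $\etan=CN^{-2p/5}$, the endpoints $\xia,\xib$ satisfy the sup hypothesis: Lemma~\ref{lemma: g: bound on psi} gives $\sup_{[\xia,\xib]}|\psi_0|=O_p((\log N)^2)$, while Lemma~\ref{lemma: hn: lower bound on hn} (parts A and B together) gives $\sup_{[\xia,\xib]}|\hln|=O_p(\log N)$, so their sum is $O_p((\log N)^2)$ as required.

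The main obstacle is the pointwise inequality in the second paragraph: a naive approach that bounds the ratio $\hn/g_0=e^{\hln-\psi_0}$ by $e^{M_N}$ would produce a useless exponential factor $e^{(\log N)^2}$. Replacing the exponential with the polynomial factor $M_N^2$ is what makes the Hellinger rate transfer cleanly to the weighted $L_2$ distance $\int(\hln-\psi_0)^2 g_0$, with only a polylogarithmic cost; this is precisely where the careful analysis of $\min_{|t|\le M}|(e^t-1)/t|$ is essential. Everything else is bookkeeping via Cauchy--Schwarz-type manipulations and the previously established lemmas.
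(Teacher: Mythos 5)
Your proof is correct, and it supplies content that the paper itself omits: the paper's ``proof'' of the first part is just a pointer to Lemma B.12 of \cite{laha2021adaptive} (with a change of interval endpoints), while your argument is self-contained. The engine of your proof --- the elementary inequality $t^2\le C M^2(e^t-1)^2$ for $|t|\le M$, $M\ge 1$, applied with $t=(\hln(x)-\psi_0(x))/2$ so that $g_0(x)(e^t-1)^2=(\sqrt{\hn(x)}-\sqrt{g_0(x)})^2$ --- is exactly the right device: it converts the weighted $L_2$ distance between the log-densities into $M_N^2$ times the squared Hellinger distance, and your observation that the minimum of $|(e^t-1)/t|$ over $[-M,M]$ is of order $1/M$ (attained near $t=-M$) is what replaces the useless $e^{M_N}$ factor by the polynomial $M_N^2=O_p((\log N)^4)$. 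The symmetric treatment of the $\hn$-weighted integral and the verification of the ``in particular'' clause via Lemma~\ref{lemma: g:  bound on psi} and Lemma~\ref{lemma: hn: lower bound  on hn} coincide with what the paper does. Two cosmetic points worth making explicit in a final write-up: (i) the $O_p$ hypothesis should be unpacked by working on the high-probability event $\{\sup_{[a_{m,n},c_{m,n}]}(|\psi_0|+|\hln|)\le K(\log N)^2\}$ and absorbing its complement into the $O_p$ statement; (ii) one should note that $M_N$ can be replaced by $\max(M_N,2)$ so that the hypothesis $M\ge 1$ of the pointwise inequality is always met. Neither affects correctness.
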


 \begin{proof}[Proof of Lemma~\ref{lemma: Laha_Nilanjana 31}]
 The first part of the proof follows from the proof of Lemma B.12 in \cite{laha2021adaptive} by replacing $-a_n$ and $a_n$  in that proof by $a_{m,n}$ and $c_{m,n}$, respectively.

To prove the the second part of the lemma, we show  that if  $\hn\in\mathcal{LC}$ and $a_{m,n}=\tilde G_{m,n}^{-1}(\etan), c_{m,n}=\tilde G_{m,n}^{-1}(1-\etan)$, then  \eqref{inlemma: statement: os: nilanjana 31} holds. 
In particular,  Lemma~\ref{lemma: g:  bound on psi} shows  that this $a_{m,n}, c_{m,n}$ satisfies
 \begin{align}\label{inlemma: os: Nilanjana 31}
     \sup_{x\in[a_{m,n}, c_{m,n}]} |\psi_0(x)|=O_p((\log N)^2).
 \end{align}
On the other hand, 
$\sup_{x\in [a_{m,n},c_{m,n}]}|\hln(x)| =O_p(\log N)$ by 
Lemma~\ref{lemma: hn: lower bound  on hn}. Hence the proof follows. 

 \end{proof}

  \subsubsection{\textbf{Lemmas on  $\hln'$:}}
  \begin{lemma}\label{FI Lemma: Lemma 1}
Suppose $\hn\in\mathcal{LC}$ satisfies Conditions~\ref{condition: hn basic} and \ref{cond: hellinger rate}. Let $\rho_{m, n} = \frac{\etan}{\log N}$ where $\etan = CN^{-2p/5}$ and $p\in(0, 1)$ is as in Condition~\ref{cond: hellinger rate}. Let $a_{m, n}$ and $c_{m, n}$ be random sequences satisfying  \eqref{inlemma: statement: os: nilanjana 31} so that $|a_{m, n}|, |c_{m, n}|= O_p(\log N)$. Further suppose $a_{m,n}$ and $c_{m,n}$ satisfy
\begin{equation}\label{inlemma: statement: os: suport inclusion}
 \PP\left( [a_{m,n}-\rho_{m,n},c_{m,n}+\rho_{m,n}]\subset \iint(\dom(\psi_0))\cap\iint(\dom(\hln))\right )\to 1,
 \end{equation}
 \begin{flalign}\label{inlemma: statement: os: FI: an xin}
 \text{and}\quad \PP(\tilde G_{m,n}(a_{m,n})>\etan/4, 1-\tilde G_{m,n}(c_{m,n})>\etan/4)\to 1. 
\end{flalign}

 Then, for any density $\nu_{m,n}$ with $\|\nu_{m,n}\|_\infty=O_p(1)$, there exists a constant $c>0$ so that 
 \[ \dint_{a_{m,n}}^{c_{m,n}}\slb \hln'(z)-\psi_0'(z)\srb^2dz=O_p( (\log N)^c N^{-4p/5}),\]
  \[\dint_{a_{m,n}}^{c_{m,n}} \slb \hln'(z)-\psi'_0(z)\srb^2 \nu_{m,n}(z)dz=O_p( (\log N)^c N^{-4p/5})).\]
  In particular, if
  $\hn\in\mathcal{LC}$, $a_{m,n}=\xia(\tilde G_{m,n})=\tilde G_{m,n}^{-1}(\etan)$, and $c_{m,n}=\xib(\tilde G_{m,n})=\tilde G_{m,n}^{-1}(1-\etan)$, then the lemma holds.
 \end{lemma}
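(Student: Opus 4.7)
The approach combines the classical concavity trick for bounding the difference of derivatives of two concave functions by finite differences with the weighted $L_2$ rate supplied by Lemma~\ref{lemma: Laha_Nilanjana 31}. Writing $\delta=\hln-\psi_0$, for any step $h>0$ with $z\pm h$ in the common interior of the two domains, concavity of both $\hln$ and $\psi_0$ gives the chord sandwich $[\phi(z+h)-\phi(z)]/h\le\phi'(z)\le[\phi(z)-\phi(z-h)]/h$ for $\phi\in\{\hln,\psi_0\}$, and combining the two-sided bounds yields
\[\bigl|\hln'(z)-\psi_0'(z)\bigr|\;\le\;\frac{|\delta(z)|+|\delta(z-h)|+|\delta(z+h)|}{h}+R(z,h),\]
where $R(z,h)$ is the difference between the averages of $\psi_0'$ over $[z-h,z]$ and $[z,z+h]$. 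By Assumption~\ref{assump: L}, $\psi_0'$ is $\kappa$-Lipschitz on $\iint(\dom(\psi_0))$, so $R(z,h)\lesssim \kappa h$.

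Choose $h=\rho_{m,n}=\etan/\log N$. Squaring the pointwise bound and integrating over $[a_{m,n},c_{m,n}]$ gives
\[\int_{a_{m,n}}^{c_{m,n}}\!\!\bigl(\hln'-\psi_0'\bigr)^2 dz\;\lesssim\;\rho_{m,n}^{-2}\!\!\int_{a_{m,n}-\rho_{m,n}}^{c_{m,n}+\rho_{m,n}}\!\!\delta^2 dz+\kappa^2\rho_{m,n}^2(c_{m,n}-a_{m,n}),\]
and the enlarged interval lies in $\iint(\dom(\psi_0))\cap\iint(\dom(\hln))$ by \eqref{inlemma: statement: os: suport inclusion}. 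To pass from $\int\delta^2\,dz$ to $\int\delta^2 g_0\,dz$ (which Lemma~\ref{lemma: Laha_Nilanjana 31} controls), I would prove a lower bound $g_0(z)\gtrsim\omega_0\etan$ on the enlarged interval. This uses three ingredients: (i) $d_{TV}(\tilde G_{m,n},G_0)\le\sqrt 2\,\H(\hn,g_0)=O_p(N^{-p})=o_p(\etan)$ by Condition~\ref{cond: hellinger rate}; (ii) the quantile condition \eqref{inlemma: statement: os: FI: an xin}, which together with (i) forces $G_0(a_{m,n})\ge\etan/8$ and $1-G_0(c_{m,n})\ge\etan/8$ with probability approaching one; and (iii) extension by $\rho_{m,n}\ll\etan/\|g_0\|_\infty$ preserves these bounds up to constants, after which Fact~\ref{fact: f grtr than F: base} delivers $g_0\gtrsim\omega_0\etan$ throughout. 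Applying Lemma~\ref{lemma: Laha_Nilanjana 31} on the enlarged interval then gives $\int\delta^2 g_0\,dz=O_p((\log N)^{c_0}N^{-2p})$, and hence $\int\delta^2 dz=O_p((\log N)^{c_0}N^{-2p}/\etan)=O_p((\log N)^{c_0}N^{-8p/5})$. Plugging back, the first term is $O_p((\log N)^{c_0+2}N^{-4p/5})$ while the second is $O_p(\rho_{m,n}^2\log N)=O_p(N^{-4p/5}/\log N)$, delivering the stated rate.

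The $\nu_{m,n}$-weighted integral reduces immediately to the unweighted one because $\|\nu_{m,n}\|_\infty=O_p(1)$. For the specialization $a_{m,n}=\xia$, $c_{m,n}=\xib$: \eqref{inlemma: statement: os: suport inclusion} follows from Lemma~\ref{lemma: An inclusion}, \eqref{inlemma: statement: os: FI: an xin} from $\tilde G_{m,n}(\xia)=\etan$ via Fact~\ref{fact: inverse common}(i), and the sup hypothesis of Lemma~\ref{lemma: Laha_Nilanjana 31} was already verified in its own proof through Lemmas~\ref{lemma: g:  bound on psi} and~\ref{lemma: hn: lower bound  on hn}. The main technical difficulty is invoking Lemma~\ref{lemma: Laha_Nilanjana 31} on the slightly enlarged interval $[a_{m,n}-\rho_{m,n},c_{m,n}+\rho_{m,n}]$ rather than only on $[a_{m,n},c_{m,n}]$, because the hypothesis provides the $(\log N)^2$ sup bound only on the latter. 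For $|\psi_0|$ the extension is immediate from Lemma~\ref{lemma: g:  bound on psi} since $|a_{m,n}\pm\rho_{m,n}|,|c_{m,n}\pm\rho_{m,n}|$ are still $O_p(\log N)$; for $|\hln|$ one leverages concavity to dominate $|\hln|$ on the extension by its value at the nearest original endpoint plus $\rho_{m,n}$ times a one-sided score bound, both of which are $O_p((\log N)^2)$, so that the sup hypothesis persists up to a logarithmic inflation absorbed into the exponent $c$.
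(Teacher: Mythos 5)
Your proposal is correct and follows essentially the same route as the paper's proof, which for the first part simply defers to Lemma B.13 of \cite{laha2021adaptive}: that argument is exactly the concave finite-difference sandwich with step $\rho_{m,n}=\etan/\log N$, the Lipschitz control of $\psi_0'$ from Assumption \ref{assump: L}, the lower bound $g_0\gtrsim\etan$ on the $\rho_{m,n}$-enlarged interval obtained from \eqref{inlemma: statement: os: FI: an xin} and the Hellinger rate, and Lemma \ref{lemma: Laha_Nilanjana 31} to supply the $g_0$-weighted $L_2$ rate; your verification of the specialization to $a_{m,n}=\xia$, $c_{m,n}=\xib$ via Lemmas \ref{lemma: Laha_Nilanjana 31}, \ref{lemma: bound: xi n}, \ref{lemma: xi: tilde xi n}, and \ref{lemma: An inclusion} also matches the paper. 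The only spot worth tightening is the extension of the $O_p((\log N)^2)$ bound on $|\hln|$ to $[a_{m,n}-\rho_{m,n},c_{m,n}+\rho_{m,n}]$, where the cleanest route is Fact \ref{fact: f grtr than F: base} together with $\tilde G_{m,n}(a_{m,n}-\rho_{m,n})\gtrsim\etan$ (giving $-\hln=O_p(\log N)$ there) rather than a one-sided score bound, but this does not affect the validity of the argument.
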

 
 \begin{proof}[Proof of Lemma~\ref{FI Lemma: Lemma 1}]

The proof of the first part follows from  the proof of Lemma B.13 in \cite{laha2021adaptive} by replacing $-a_n$ and $a_n$ in the original proof by $a_{m,n}$ and $c_{m,n}$, respectively, and hence skipped.

For the second part of Lemma \ref{FI Lemma: Lemma 1}, 
 consider  $\hn\in\mathcal{LC}$, $a_{m,n}=\xia$, and  $c_{m,n}=\xib$. They satisfy \eqref{inlemma: statement: os: nilanjana 31} by Lemma~\ref{lemma: Laha_Nilanjana 31}. Also Lemma~\ref{lemma: bound: xi n} implies that $|\xia|,\ |\xib|=O_p(\log N)$ Noting $\rho_{m,n}=o(\etan)$, \eqref{inlemma: statement: os: suport inclusion} follows from  Lemma~\ref{lemma: xi: tilde xi n}
and Lemma~\ref{lemma: An inclusion}.
 Since \eqref{inlemma: statement: os: FI: an xin} trivially holds,  second part of Lemma~\ref{FI Lemma: Lemma 1} also follows.
 \end{proof}

 \begin{lemma}\label{lemma: consistency of FI: 2}
 Let $\hn$ be a density satisfying Condition~\ref{condition: hn basic} and Condition~\ref{cond: hellinger rate}. Let $a_{m,n}$  and $c_{m,n}$ be random sequences  satisfying 
 \begin{align}
\label{inlemma: statement: FI: an}
    |a_{m,n}|=O_p(\log N),\quad 
\PP\slb a_{m,n}\in\iint(\dom(\psi_0))\srb\to 1,\quad\text{and}\quad
G_0( a_{m,n})\to_p 0,\\
 |c_{m,n}|=O_p(\log N),\quad 
\PP\slb c_{m,n}\in\iint(\dom(\psi_0))\srb\to 1,\quad\text{and}\quad
 G_0(c_{m,n})\to_p 1.
 \end{align}
 Suppose there exists an absolute constant $c>0$ so that $\hn, a_{m,n}, c_{m,n}$ satisfy
 \begin{align}\label{inlemma: statement: FI}
     \dint_{a_{m,n}}^{c_{m,n}}(\hln'(z)-\psi_0'(z))^2\hn(z)dz=O_p((\log N)^c N^{-4p/5}),
 \end{align}
where $p$ is as in Condition~\ref{cond: hellinger rate}.
Then 
 $\int_{a_{m,n}}^{c_{m,n}}\hln'(z)^2\hn(z)dz\to_p \Io$. In particular, if $\hn\in\mathcal{LC}$, $a_{m,n}=\xia(\tilde G_{m,n})=\tilde G_{m,n}^{-1}(\etan)$, and $c_{m,n}=\xib(\tilde G_{m,n})=\tilde G_{m,n}^{-1}(1-\etan)$ where $\etan=C N^{-2p/5}$ for some $C>0$, then the statement of the lemma holds.
\end{lemma}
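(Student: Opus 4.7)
The plan is to expand $\hln'(z)^2$ around $\psi_0'(z)$ via the identity $a^2 = b^2 + 2b(a-b) + (a-b)^2$, so that
\begin{align*}
\dint_{a_{m,n}}^{c_{m,n}} \hln'(z)^2 \hn(z)\,dz = &\ \underbrace{\dint_{a_{m,n}}^{c_{m,n}} \psi_0'(z)^2 \hn(z)\,dz}_{I_1} + 2\underbrace{\dint_{a_{m,n}}^{c_{m,n}} \psi_0'(z)(\hln'(z)-\psi_0'(z)) \hn(z)\,dz}_{I_2} \\
&\ + \underbrace{\dint_{a_{m,n}}^{c_{m,n}} (\hln'(z)-\psi_0'(z))^2 \hn(z)\,dz}_{I_3}.
\end{align*}
By hypothesis \eqref{inlemma: statement: FI}, $I_3 = O_p((\log N)^c N^{-4p/5}) = o_p(1)$. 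Applying Cauchy--Schwarz to $I_2$ gives $|I_2|\leq \sqrt{I_3}\cdot \sqrt{I_1}$, so once $I_1 = O_p(1)$ is established, $I_2 = o_p(1)$ follows automatically. The whole problem then reduces to showing $I_1 \to_p \mathcal I_{g_0}$.

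For $I_1$, write $I_1 = \int_{a_{m,n}}^{c_{m,n}} \psi_0'(z)^2 g_0(z)\,dz + \int_{a_{m,n}}^{c_{m,n}} \psi_0'(z)^2 (\hn(z)-g_0(z))\,dz$. The first integral converges to $\mathcal I_{g_0}$ in probability: since $G_0(a_{m,n})\to_p 0$ and $G_0(c_{m,n})\to_p 1$, the $G_0$-measure of the complement of $[a_{m,n},c_{m,n}]$ vanishes in probability, and because $\psi_0'^2 \cdot g_0$ is $G_0$-integrable (this integral equals the finite Fisher information $\mathcal I_{g_0}$), Fact~\ref{fact: condition for integrability} yields that the tails $\int_{[a_{m,n},c_{m,n}]^c}\psi_0'(z)^2 g_0(z)\,dz\to_p 0$ -- this is essentially the same argument used to handle the tail integral $T_{3N}$ in the third step of the proof of Theorem~\ref{theorem: main theorem}. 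For the second integral, observe that Assumption~\ref{assump: L} together with $|a_{m,n}|, |c_{m,n}| = O_p(\log N)$ (invoking the Lipschitz bound on $\psi_0'$ exactly as in the proof of Lemma~\ref{lemma: bound: psi knot prime}, fixing a point $c\in\iint(\dom(\psi_0))$) gives $\sup_{z\in[a_{m,n},c_{m,n}]} \psi_0'(z)^2 = O_p((\log N)^2)$. Combining Condition~\ref{cond: hellinger rate} with Fact~\ref{fact: dTV and hellinger} yields $\|\hn-g_0\|_1 \lesssim \H(\hn,g_0) = O_p(N^{-p})$, and therefore the second integral is $O_p((\log N)^2 N^{-p}) = o_p(1)$. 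Thus $I_1 \to_p \mathcal I_{g_0}$, which completes the proof of the main assertion.

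For the particular case $\hn\in\mathcal{LC}$, $a_{m,n}=\xia$, $c_{m,n}=\xib$ with $\etan = CN^{-2p/5}$, I would verify each hypothesis: $|\xia|,|\xib| = O_p(\log N)$ by Lemma~\ref{lemma: bound: xi n}; the inclusion $[\xia,\xib]\subset \iint(\dom(\psi_0))$ holds with probability tending to one by Lemma~\ref{lemma: xi: xi is in dop psi knot}; the limits $G_0(\xia)\to_p 0$ and $G_0(\xib)\to_p 1$ follow from the TV bound $|G_0(\xia)-\tilde G_{m,n}(\xia)|\leq \sqrt{2}\H(\hn,g_0)=O_p(N^{-p})$ combined with $\tilde G_{m,n}(\xia)=\etan\to 0$ (and symmetrically for $\xib$); and \eqref{inlemma: statement: FI} is exactly the second assertion of Lemma~\ref{FI Lemma: Lemma 1}. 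There is no single hard obstacle -- the only slightly delicate point is controlling $I_1$ despite the unboundedness of $\psi_0'$ on the full real line, which is why the Hellinger rate of Condition~\ref{cond: hellinger rate} (not just the $L_1$ consistency from Condition~\ref{condition: hn basic}) is crucial for dominating the logarithmic blow-up of $\psi_0'^2$ on $[a_{m,n},c_{m,n}]$.
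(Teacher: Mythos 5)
Your proof is correct and follows essentially the same approach as the paper: you expand $\hln'^2$ around $\psi_0'^2$ (the paper writes the same three terms, labelled $A_1$, $A_2$, $A_3$), kill the cross term by Cauchy--Schwarz, bound $\int \psi_0'^2(\hn - g_0)$ by $\sup_{[a_{m,n},c_{m,n}]}\psi_0'^2\cdot\|\hn-g_0\|_1$ using Assumption~\ref{assump: L} and the Hellinger rate, and handle the tails of $\int\psi_0'^2\, g_0$ via Fact~\ref{fact: condition for integrability}. The only cosmetic difference is in the cross term: you use $|I_2|\leq\sqrt{I_1}\sqrt{I_3}$, whereas the paper first pulls $\sup|\psi_0'|=O_p(\log N)$ out of the integral and applies Cauchy--Schwarz to what remains; both give $o_p(1)$.
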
 
 
 \begin{proof}[Proof of Lemma~\ref{lemma: consistency of FI: 2}]
First notice that
 \begin{align*}
  \MoveEqLeft    \dint_{a_{m,n}}^{c_{m,n}}\slb\hln'(z)^2\hn(z)-\psi_0'(z)^2g_0(z)\srb dz\\
  = &\ \dint_{a_{m,n}}^{c_{m,n}}\slb(\hln'(z)-\psi_0'(z))^2\hn(z)+2\psi_0'(z)\hln'(z)\hn(z)-\psi_0'(z)^2\hn(z)-\psi_0'(z)^2g_0(z)\srb dz\\
  =&\ \underbrace{ \dint_{a_{m,n}}^{c_{m,n}}(\hln'(z)-\psi_0'(z))^2\hn(z)dz}_{A_1}+ 2\underbrace{\dint_{a_{m,n}}^{c_{m,n}}\psi_0'(z)(\hln'(z)-\psi_0'(z))\hn(z)dz}_{A_2}\\
  &\ +\underbrace{\dint_{a_{m,n}}^{c_{m,n}}\psi_0'(z)^2(\hn(z)-g_0(z))dz}_{A_3}.
 \end{align*}
 Using \eqref{inlemma: statement: FI} from the assumptions, we have
 $A_1=O_p((\log N)^c N^{-4p/5})$ and therefore $A_1$ is $o_p(1)$.

Now we show that $A_2$ is $o_p(1)$. Note that,
\begin{align*}
|A_2|
   \leq &\ \max\{|\psi_0'(a_{m,n})|,|\psi_0'(c_{m,n})|\}\dint_{a_{m,n}}^{c_{m,n}}|\hln'(z)-\psi_0'(z)|\hn(z)dz \\
  \leq &\ \max\{|\psi_0'(a_{m,n})|,|\psi_0'(c_{m,n})|\}\lb\dint_{a_{m,n}}^{c_{m,n}}(\hln'(z)-\psi_0'(z))^2\hn(z)dz \rb^{1/2},
\end{align*}
where the first step is due to the fact that $|\psi_0'|$ attains its supremum over an interval in either of the end-points,  and the last step is due to the Cauchy-Schwarz inequality. Therefore, $|A_2|\leq (|\psi_0'(a_{m,n})|+|\psi_0'(a_{m,n})|)\sqrt{A_1}$.

Recall that $\PP(a_{m, n}\in \iint(\dom(\psi_0)))\to 1$ under the setup of the current lemma. Fix  $x_0\in \iint(\dom(\psi_0))$. Therefore, By Assumption~\ref{assump: L}, with probability converging to one, 
\[|\psi_0'(a_{m,n})|\leq |\psi_0'(x_0)|+\kappa |a_{m,n}-x_0| \leq |\psi_0'(x_0)|+\kappa |a_{m,n}| + \kappa|x_0|,\]
which is $O_p(\log N)$ because $|\ps_0'(x_0)|<\infty$ since $x_0\in \iint(dom(\psi_0))$ \citep[cf. Theorem 23.4 of][]{rockafellar} and $|a_{m,n}|=O_p(\log N)$ under the setup of the current lemma. Hence, we have showed that $|\psi_0'(a_{m,n})| = O_p(\log N)$. Similarly one can show that $|\psi_0'(c_{m,n})| = O_p(\log N)$. Therefore, 
 \[|A_2|=O_p(\log N) \sqrt{A_1}=O_p((\log N)^{c/2+1}N^{-2p/5})=o_p(1).\]
 
To bound $|A_3|$, we again use the facts that $|\psi_0'|$ attains maxima at either endpoints on an interval and $|\psi_0'(a_{m,n})|, |\psi_0'(c_{m,n})|=O_p(\log N)$ to obtain 
\begin{align*}
    |A_3| &\le \max\{|\psi_0'(a_{m,n})|,|\psi_0'(c_{m,n})|\}^2\dint_{a_{m, n}}^{c_{m, n}}\left| \hn(z) - g_0(z)\right|dz\\
    & \leq  O_p((\log N)^2) d_{TV}(\hn, g_0)\\
    & \stackrel{(a)}{\leq} 
     O_p((\log N)^2)\H(\hn,g_0)\stackrel{(b)}{=} O_p((\log N)^2 N^{-p})=o_p(1),
 \end{align*}
 where (a) is by Fact~\ref{fact: dTV and hellinger} and (b) is due to Condition~\ref{cond: hellinger rate}.
Combining $A_1, A_2, A_3$, we obtain that 
\begin{align}
\label{inlemma: FI: differences}
    & \dint_{a_{m,n}}^{c_{m,n}}\slb\hln'(z)^2\hn(z)-\psi_0'(z)^2g_0(z)\srb dz=o_p(1).
\end{align}
Therefore, it remains to show that
\begin{align}
    \label{inlemma: FI: penultymate}
    \dint_{a_{m,n}}^{c_{m,n}}\ps_0'(z)^2g_0(z)dz\to_p \Io,
\end{align}
which is equivalent to
\[\dint_{-\infty}^{a_{m,n}}\ps_0'(z)^2g_0(z)dz\to_p 0\quad\text{and}\quad \dint_{c_{m.n}}^{\infty}\ps_0'(z)^2g_0(z)dz\to_p 0.\]
To this end, it suffices to show  the first convergence because the second convergence will follow similarly. 
Since $\Io<\infty$,  Fact~\ref{fact: condition for integrability}  implies that given any $\e>0$, we can find a $\sigma>0$ so small such that 
 $\int_{\mathcal B}\ps_0'^2(x)g_0(x)dx<\e$ for any $P_0$-measurable set $\mathcal{B}\subset\RR$ provided $P_0(\mathcal B)<\sigma$. 
 Thus it is enough to  show that $\int_{-\infty}^{a_{m,n}}g_0(z)dz=G_0(a_{m,n})=o_p(1)$. However, by our assumption on $a_{m,n}$, $G_0(a_{m,n})\to_p 0$. Hence, it follows that $\int_{-\infty}^{a_{m,n}}\ps_0'(z)^2g_0(z)dz$ is $o_p(1)$. 
 Therefore, we have established \eqref{inlemma: FI: penultymate}, which, combined with \eqref{inlemma: FI: differences}, finishes the proof of the first part of the current lemma.


 The proof of the second part of Lemma~\ref{lemma: consistency of FI: 2} follows by observing that $|\xia|$ and $|\xib|$ are of the order $O_p(\log n)$ according to Lemma~\ref{lemma: bound: xi n}. Additionally, with probability converging to one, we have $\xia$ and $\xib\in\iint(\dom(\psi_0))$ as established in Lemma~\ref{lemma: xi: xi is in dop psi knot}. Furthermore, Lemma~\ref{lemma: xi: xi goes to 1} implies  that $\xia$ tends to $G_0^{-1}(0)$ and $\xib$ tends to $G_0^{-1}(1)$ in probability. Lastly,  $\hn$ satisfies  \eqref{inlemma: statement: FI} by Lemma~\ref{FI Lemma: Lemma 1}. Hence, the proof follows from the first part of the lemma. 
 \end{proof}

\begin{lemma}\label{lemma: tilde psi prime bound}
    Consider the setup of Theorem~\ref{theorem: main theorem}. Suppose $\etan = CN^{-2p/5}$ where $C>0$ is a constant and $p$ is as in  Condition~\ref{cond: hellinger rate}. Let $v_{m, n}$ be a potentially stochastic sequence of non-negative variables satisfying $\PP(v_{m, n}\leq \etan/(2\|g_0\|_\infty))\to 1$. Then the following assertion  holds:
    \[ \sup_{x\in[\xia-v_{m, n},\xib+v_{m, n}]}|\hln'(x)|=O_p(\etan^{-1/2})=O_p(N^{p/5}).\]
\end{lemma}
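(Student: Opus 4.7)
The plan is to exploit the log-concavity of $\hn$ in a purely monotonic way: because $\hln = \log \hn$ is concave, its derivative $\hln'$ is non-increasing on its effective domain, and hence on any interval $|\hln'|$ attains its supremum at one of the endpoints. Combined with Lemma~\ref{lemma: xi: tilde xi n}, which with probability tending to one places $[\xia - v_{m,n}, \xib + v_{m,n}]$ inside $[\tilde\xi_1, \tilde\xi_2]$ (where $\tilde\xi_1 = \tilde G_{m,n}^{-1}(\etan/2)$ and $\tilde\xi_2 = \tilde G_{m,n}^{-1}(1-\etan/2)$), monotonicity reduces the task to bounding $\max(|\hln'(\tilde\xi_1)|, |\hln'(\tilde\xi_2)|)$ by $O_p(\etan^{-1/2})$; by symmetry I focus on $\tilde\xi_1$.

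The key step will convert a pointwise bound on $|\hln'(\tilde\xi_1)|$ into the integral bound of Lemma~\ref{FI Lemma: Lemma 1} via an averaging argument on a short buffer interval adjacent to $\tilde\xi_1$. Let $a_{m,n} = \tilde G_{m,n}^{-1}(\etan/3)$ and $c_{m,n} = \tilde G_{m,n}^{-1}(1-\etan/3)$, so that $a_{m,n} < \tilde\xi_1 < c_{m,n}$; the hypotheses of Lemma~\ref{FI Lemma: Lemma 1} for this $[a_{m,n}, c_{m,n}]$ follow from Lemma~\ref{lemma: bound: xi n}, Lemma~\ref{lemma: g:  bound on psi}, Lemma~\ref{lemma: hn: lower bound  on hn}, and Lemma~\ref{lemma: An inclusion}. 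In the trivial regime $|\hln'(\tilde\xi_1)| = O_p(\log N)$ the target bound is immediate, so I may assume $|\hln'(\tilde\xi_1)| \gtrsim \log N$. Monotonicity of $\hln'$ together with Lemma~\ref{lemma: bound: psi knot prime} then forces $|\hln'(z) - \psi_0'(z)| \geq |\hln'(\tilde\xi_1)|/2$ on a buffer interval adjacent to $\tilde\xi_1$---to the left if $\hln'(\tilde\xi_1) > 0$, and to the right if $\hln'(\tilde\xi_1) < 0$---so that
\[|\hln'(\tilde\xi_1)|^2 \cdot (\text{buffer length}) \; \lesssim \; \int_{a_{m,n}}^{c_{m,n}} \big(\hln'(z) - \psi_0'(z)\big)^2 \, dz \; = \; O_p\big((\log N)^c N^{-4p/5}\big).\]

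The hard part will be to show the buffer length is $\gtrsim \etan$ with probability tending to one. Using Fact~\ref{fact: bobkov big},
\[\tilde\xi_1 - a_{m,n} \; = \; \int_{\etan/3}^{\etan/2} \frac{dt}{\hn(\tilde G_{m,n}^{-1}(t))} \; \geq \; \frac{\etan/6}{\|\hn\|_\infty},\]
with a parallel identity for the right-sided buffer, so because $\|\hn\|_\infty = O_p(1)$---as $\hn$ is log-concave and consistent for $g_0$ (combine Fact~\ref{fact: Lemma 1 of theory paper} with Condition~\ref{condition: hn basic})---the buffer length is indeed $\gtrsim \etan$ with probability approaching one. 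Substituting this and $\etan = CN^{-2p/5}$ into the previous display and taking square roots gives $|\hln'(\tilde\xi_1)| = O_p((\log N)^{c/2} N^{-p/5}) = O_p(\etan^{-1/2})$, and the analogous argument at $\tilde\xi_2$ completes the proof. The most delicate points are the choice of buffer points $a_{m,n}, c_{m,n}$ (they must sit far enough from $\tilde\xi_1, \tilde\xi_2$ to produce a buffer length of order $\etan$, yet close enough to still satisfy the quantile hypotheses $\tilde G_{m,n}(a_{m,n}) > \etan/4$ and $1 - \tilde G_{m,n}(c_{m,n}) > \etan/4$ required by Lemma~\ref{FI Lemma: Lemma 1}) and the sign-dependent case split on $\hln'(\tilde\xi_1)$.
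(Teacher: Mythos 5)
Your proof is correct in substance, but it takes a genuinely different route from the paper's. The paper works directly with the truncated Fisher-information integral: using Fact~\ref{fact: bobkov big} to rewrite $\int_{\tilde G^{-1}_{m,n}(q/2)}^{\tilde G^{-1}_{m,n}(q)} \hln'(x)^2 \hn(x)\,dx = \int_{q/2}^q \hln'(\tilde G^{-1}_{m,n}(z))^2\,dz$, it bounds the infimum of $|\hln'|$ over the quantile slice $[q/2,q]$ (and over $[q,3q/2]$) by $\sqrt{(2/q)\hin(\etan)}$ with $q=\etan/2$, invokes Lemma~\ref{lemma: consistency of FI: 2} to get $\hin(\etan)\to_p \Io$, and then uses the same sign-dependent case split you use (monotonicity of $\hln'$ forces the infimum to sit at $\tilde\xi_1$ or $\tilde\xi_2$). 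This avoids any reference to $\psi_0'$, avoids buffer-length estimates, and avoids a dichotomy. Your route instead goes through Lemma~\ref{FI Lemma: Lemma 1} (the $L^2$ rate of $\hln'-\psi_0'$), requires a bound on $\psi_0'$ over a slightly enlarged interval, requires the buffer-length calculation via Fact~\ref{fact: bobkov big} and $\|\hn\|_\infty = O_p(1)$, and closes with a threshold-dichotomy that forces $|\hln'(\tilde\xi_1)| = O_p(\log N)$; this is actually a sharper bound than $O_p(N^{p/5})$, though it is not needed. Both arguments are valid; the paper's is shorter and uses weaker inputs (consistency of $\hin(\etan)$ rather than an $L^2$ rate).

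A few small imprecisions in your write-up worth flagging. First, the display in your final sentence, $O_p((\log N)^{c/2}N^{-p/5}) = O_p(\etan^{-1/2})$, is not a literal equality ($N^{-p/5}\to 0$ while $\etan^{-1/2}\to\infty$); what you mean is that the derived quantity is $o(\log N) = o(N^{p/5})$, and so contradicts the standing assumption $|\hln'(\tilde\xi_1)|\gtrsim \log N$, landing you in the trivial regime of the dichotomy. Second, you never specify the right endpoint of the right-sided buffer; a valid choice is $\tilde G^{-1}_{m,n}(2\etan/3)$, which stays inside $[a_{m,n},c_{m,n}]$ and has length $\geq \etan/(6\|\hn\|_\infty)$ by the same Fact~\ref{fact: bobkov big} computation. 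Third, the citation for $\|\hn\|_\infty = O_p(1)$ is off: Fact~\ref{fact: Lemma 1 of theory paper} gives a sub-exponential envelope whose constants depend on $\hn$ itself, so it does not directly yield a uniform bound. The correct route (used in Step 4 of the proof of Proposition~\ref{prop: pooled estimator satisfies Conditions}) is that $L^1$-consistency of a log-concave $\hn$ toward $g_0$ implies uniform convergence via Proposition 2(c) of \cite{cule2010}, and hence $\|\hn\|_\infty \to_p \|g_0\|_\infty < \infty$.
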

\begin{proof}[Proof of Lemma~\ref{lemma: tilde psi prime bound}]
Consider $q\in (0, 1/4)$. We have already mentioned that since $\hn$ is log-concave, it is positive on $J(\tilde G_{m,n})$. Therefore, using Fact \ref{fact: bobkov big}, we obtain that 
\[
\dint_{\tilde{G}^{-1}_{m,n}(q/2) }^{\tilde{G}^{-1}_{m,n}(q)} \hln'(x)^2\hn(x)dx=\dint_{q/2}^q\hln'(\tilde{G}^{-1}_{m,n}(z))^2dz.
\]
 Hence, we have
\begin{align*}
    \frac{q}{2}\inf_{\eta\in [\frac{q}{2}, q]}\hln'(\tilde{G}^{-1}_{m,n}(\eta))^2 \leq& \dint_{\tilde{G}^{-1}_{m,n}(q/2) }^{\tilde{G}^{-1}_{m,n}(q)} \hln'(x)^2\hn(x)dx\\
     \stackrel{(a)}{\leq} &\dint_{\tilde{G}^{-1}_{m,n}(q/2) }^{\tilde{G}^{-1}_{m,n}(1-q/2)} \hln'(x)^2\hn(x)dx,
\end{align*}
where (a) holds since $q<1-q/2$. Therefore,
\begin{align*}
    \inf_{\eta\in [\frac{q}{2}, q]} \left|\hln'(\tilde{G}^{-1}_{m,n}(\eta))\right|^2 \leq \frac{2}{q}\dint_{\tilde{G}^{-1}_{m,n}(q/2) }^{\tilde{G}^{-1}_{m,n}(1-q/2)} \hln'(x)^2\hn(x)dx.
\end{align*}
Similarly
\begin{align*}
    \inf_{\eta\in [q, \frac{3q}{2}]} \left|\hln'(\tilde{G}^{-1}_{m,n}(\eta))\right|^2 \leq \frac{2}{q}\dint_{\tilde{G}^{-1}_{m,n}(q/2) }^{\tilde{G}^{-1}_{m,n}(1-q/2)} \hln'(x)^2\hn(x)dx.
\end{align*}
Now take $q = \frac{\etan}{2}$. Then
\[\frac{2}{q}\dint_{\tilde{G}^{-1}_{m,n}(q/2) }^{\tilde{G}^{-1}_{m,n}(1-q/2)} \hln'(x)^2\hn(x)dx = O_p(\etan^{-1}),\]
since the integral converges in probability to $\Io$ by Lemma~\ref{lemma: consistency of FI: 2}. Therefore 
\[\inf_{\eta\in [\frac{q}{2}, q]} \left|\hln'(\tilde{G}^{-1}_{m,n}(\eta))\right|, \inf_{\eta\in [q, \frac{3q}{2}]} \left|\hln'(\tilde{G}^{-1}_{m,n}(\eta))\right| = O_p(\etan^{-1/2}) = O_p(N^{p/5}).\]
Recall that $\xia = \tilde G_{m,n}^{-1}(\etan), \xib = \tilde G_{m,n}^{-1}(1-\etan)$. Define $\tilde \xi_1 = \tilde G_{m,n}^{-1}(\etan/2), \tilde \xi_2 = \tilde G_{m,n}^{-1}(1-\etan/2)$. Also note that since $\hln'$ is non-increasing, for any interval $[a, b]$ such that $\hln'(a)\ge \hln'(b)\ge 0$ (or $0\ge\hln'(a)\ge \hln'(b)$), $\inf_{[a, b]}|\hln'(x)|$ is attained at $b$ (or $a$). Now we consider two scenarios:
\begin{enumerate}
    \item \textbf{$\hln'\left(\tilde \xi_1\right)\geq 0$:} Because $\xia=\tilde G_{m,n}^{-1}(\etan/2)$, it follows that  $\hln'\left(\tilde G_{m,n}^{-1}(\etan/4)\right) \geq \hln'\left(\tilde \xi_1\right)\geq 0$ and
    \[ \inf_{\eta\in [\frac{q}{2}, q]} \left|\hln'(\tilde{G}^{-1}_{m,n}(\eta))\right| = \left|\hln'(\tilde \xi_1)\right|,\]
    and hence $\left|\hln'(\tilde \xi_1)\right|$ is $O_p(N^{p/5})$.
    \item \textbf{$\hln'\left(\tilde \xi_1\right) < 0$:} Then $0 >\hln'\left(\tilde \xi_1\right) \geq \hln'\left(G_{m,n}^{-1}(3q/2)\right) $ and hence
    \[ \inf_{\eta\in [q, \frac{3q}{2}]} \left|\hln'(\tilde{G}^{-1}_{m,n}(\eta))\right| = \left|\hln'\left(\tilde{\xi_1}\right)\right| =O_p(N^{p/5}).\]
\end{enumerate}
Combining the cases, we get $\left|\hln'\left(\tilde{\xi_1}\right)\right| = O_p(N^{p/5})$. Similarly one can also find $\left|\hln'\left(\tilde{\xi_2}\right)\right| = O_p(N^{p/5})$. Since $\hln'$ is non-increasing, its supremum or infrimum over an interval is attained at one of the end-points. Therefore $\sup_{x\in [\tilde\xi_1, \tilde \xi_2]}|\hln'(x)| = \max\left\{ \left|\hln'\left( \tilde\xi_1 \right)\right|, \left|\hln'\left( \tilde\xi_2 \right)\right|\right\} = O_p(N^{p/5})$.
The rest of the proof follows from Lemma~\ref{lemma: xi: tilde xi n}.
\end{proof}

 \begin{lemma}\label{Lemma: L2 norm of hn}
 Under the set up of Theorem~\ref{theorem: main theorem}, $h_{m, n}$ defined in \eqref{inlemma: def: main: hn} satisfies
 \[\|h_{m,n}\|_{P_0, 2}^2=O_p(N^{-4p/5}(\log N)^{3}).\]
\end{lemma}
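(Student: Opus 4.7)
\textbf{Proof plan for Lemma \ref{Lemma: L2 norm of hn}.}

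The plan is to perform a change of variables that reduces $\|h_{m,n}\|_{P_0,2}^2$ to an integral on $[\xia,\xib]$ against a translated log-concave density, and then split the integrand via the elementary inequality $(a+b)^2\le 2a^2+2b^2$ into a ``score error'' piece (controlled by Lemma~\ref{FI Lemma: Lemma 1}) and a ``translation error'' piece (controlled by Assumption~\ref{assump: L} and $\sqrt{N}$-consistency of $\bmu$). Set $\td=\mu_0-\bmu$. Using $\phi_0(x)=\psi_0(x-\mu_0)$ and $f_0(x)=g_0(x-\mu_0)$, the substitution $w=x-\bmu$ gives
\[
\|h_{m,n}\|_{P_0,2}^2=\dint_{\xia}^{\xib}\bigl(\hln'(w)-\psi_0'(w-\td)\bigr)^2 g_0(w-\td)\,dw.
\]

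Applying $(a+b)^2\le 2a^2+2b^2$ to $a=\hln'(w)-\psi_0'(w)$ and $b=\psi_0'(w)-\psi_0'(w-\td)$ bounds the display by $2I_1+2I_2$, where
\[
I_1=\dint_{\xia}^{\xib}(\hln'(w)-\psi_0'(w))^2\,g_0(w-\td)\,dw,\qquad I_2=\dint_{\xia}^{\xib}(\psi_0'(w)-\psi_0'(w-\td))^2 g_0(w-\td)\,dw.
\]
For $I_1$, the function $\nu_{m,n}(w):=g_0(w-\td)$ is a density with $\|\nu_{m,n}\|_\infty=\|g_0\|_\infty<\infty$ by Fact~\ref{fact: Lemma 1 of theory paper}, so the second half of Lemma~\ref{FI Lemma: Lemma 1} (applied with $a_{m,n}=\xia$, $c_{m,n}=\xib$) yields $I_1=O_p((\log N)^c N^{-4p/5})$.

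For $I_2$, I would invoke Assumption~\ref{assump: L} to obtain $|\psi_0'(w)-\psi_0'(w-\td)|\le\kappa|\td|$ pointwise; the hypothesis of the assumption (both $w$ and $w-\td$ in $\iint(\dom(\psi_0))$) is satisfied with probability tending to one by Lemma~\ref{lemma: An inclusion}, because $|\td|=O_p(N^{-1/2})=o_p(\etan)$ thanks to $\sqrt{N}$-consistency of $\bmu=\bX$. Hence
\[
I_2\le \kappa^2\td^{2}\dint_{\xia-\td}^{\xib-\td} g_0(u)\,du\le\kappa^2\td^{2}=O_p(N^{-1}).
\]
Since $p\le 1/2$ by Condition~\ref{cond: hellinger rate}, $N^{-1}=o(N^{-4p/5})$, so $I_1$ dominates and $\|h_{m,n}\|_{P_0,2}^2=O_p((\log N)^c N^{-4p/5})$, which is the claim (the exponent $3$ on $\log N$ is the value of $c$ delivered by Lemma~\ref{FI Lemma: Lemma 1} in this application, or can be absorbed by enlarging the power of the logarithm).

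The step I expect to require the most care is the change-of-variables bookkeeping (tracking how $\bmu$ enters $\mathcal{T}_{m,n}$, $\hln'$, and $\phi_0'$, and confirming that the endpoints of integration after substitution really are $\xia,\xib$) and the verification that the hypotheses of Lemma~\ref{FI Lemma: Lemma 1} apply verbatim to the translated density $g_0(\cdot-\td)$; everything else is a one-line Lipschitz estimate plus parametric-rate control of $\td$.
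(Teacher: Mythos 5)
Your argument is mathematically sound and, up to bookkeeping, matches what the paper intends (the paper's own "proof" is a bare citation of Lemma B.16 of \cite{laha2021adaptive}, so you have essentially reconstructed that argument in the present two-sample notation). The change of variables is carried out correctly: $w=x-\bmu$ turns the integration region $\mathcal{T}_{m,n}=[\bmu+\xia,\bmu+\xib]$ into $[\xia,\xib]$, and writing $x-\mu_0=w-\td$ with $\td=\mu_0-\bmu$ gives exactly the displayed integral. The split into $I_1$ and $I_2$ via $(a+b)^2\le 2a^2+2b^2$ is the natural move; $\nu_{m,n}(\cdot)=g_0(\cdot-\td)$ is indeed a density with $\|\nu_{m,n}\|_\infty=\|g_0\|_\infty<\infty$ by Fact~\ref{fact: Lemma 1 of theory paper}, so the second part of Lemma~\ref{FI Lemma: Lemma 1} (with $a_{m,n}=\xia$, $c_{m,n}=\xib$) applies and handles $I_1$. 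Your treatment of $I_2$ correctly combines Assumption~\ref{assump: L} with Lemma~\ref{lemma: An inclusion}: since $|\td|=O_p(N^{-1/2})=o_p(\etan)$ (because $2p/5\le 1/5<1/2$), with probability tending to one the whole interval $[\xia-|\td|,\xib+|\td|]$ sits inside $\mathrm{int}(\dom(\psi_0))$, so the Lipschitz inequality is applicable pointwise, giving $I_2\le\kappa^2\td^2=O_p(N^{-1})=o_p(N^{-4p/5})$.

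The one place where you should be more careful is the final reconciliation of the logarithmic exponent. Lemma~\ref{FI Lemma: Lemma 1} delivers $I_1=O_p((\log N)^c N^{-4p/5})$ for some unspecified absolute constant $c$, while the statement you are proving asserts the exponent is $3$. Your parenthetical remark that the exponent "can be absorbed by enlarging the power of the logarithm" is not a proof of the stated lemma as written: if $c>3$, the displayed claim with $(\log N)^3$ would simply be false. What is true (and you should say it instead) is that the specific power $3$ is immaterial: a glance at the only place the lemma is used, namely the membership of $h_{m,n}$ in $\mathcal{M}_N(C)$ and the ensuing bracketing-integral computation in the first step of the proof of Theorem~\ref{theorem: main theorem}, shows any fixed polynomial power of $\log N$ works, because $\sqrt{K_N M_N}=O(N^{-p/10}(\log N)^{c/2})\to 0$ and $M_N^2/(K_N\sqrt{N})=O(N^{(8p-5)/10}(\log N)^{-c})\to 0$ for $p\le 1/2$ regardless of $c$. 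So either one traces through Lemmas~\ref{lemma: Laha_Nilanjana 31} and~\ref{FI Lemma: Lemma 1} to pin down $c$, or one restates the present lemma with $(\log N)^c$ and adjusts the definition of $\mathcal{M}_N(C)$ accordingly; both are cosmetic. Modulo that, the proof is correct and follows the intended route.
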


\begin{proof}[Proof of Lemma~\ref{Lemma: L2 norm of hn}]
This proof follows from the proof  of Lemma B.16 of \cite{laha2021adaptive}, and hence skipped.
\end{proof}

 \section{Technical facts}
\label{sec: technical facts}

Below we  list some facts which have been used repeatedly in our proofs. We start with some facts on the Hellinger distance.

\begin{fact}\label{fact: dTV and hellinger}
Let $F$ and $G$ be two distribution functions with densities $f$ and $g$, respectively. Then $d_{TV}(F,G)\leq \sqrt{2}\H(f,g)$ and $\H^2(f,g)\leq d_{TV}(F,G)$.
\end{fact}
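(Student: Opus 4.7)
The plan is to establish both inequalities by relating the pointwise difference $|f-g|$ to the pointwise quantity $(\sqrt{f}-\sqrt{g})^2$ via the algebraic identity $f - g = (\sqrt{f}-\sqrt{g})(\sqrt{f}+\sqrt{g})$, then integrating. Since the statement is a well-known classical inequality, there should be no real obstacle; the whole argument is two short applications of Cauchy-Schwarz and a pointwise bound.

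For the upper bound $d_{TV}(F,G)\le \sqrt{2}\,\H(f,g)$, I would write
\[
|f - g| = \bigl|\sqrt{f}-\sqrt{g}\bigr|\cdot\bigl(\sqrt{f}+\sqrt{g}\bigr)
\]
and apply Cauchy-Schwarz to obtain
\[
\int |f-g|\,dx \le \Bigl(\int(\sqrt{f}-\sqrt{g})^2\,dx\Bigr)^{1/2}\Bigl(\int(\sqrt{f}+\sqrt{g})^2\,dx\Bigr)^{1/2}.
\]
The first factor equals $\sqrt{2}\,\H(f,g)$ by definition of the Hellinger distance. For the second, expand $(\sqrt{f}+\sqrt{g})^2 = f + g + 2\sqrt{fg}$ and use $\int f = \int g = 1$ together with $\int \sqrt{fg}\le 1$ (which follows from Cauchy-Schwarz applied to $\sqrt{f}\cdot\sqrt{g}$) to conclude $\int(\sqrt{f}+\sqrt{g})^2\,dx \le 4$. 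Thus $\int|f-g|\,dx \le 2\sqrt{2}\,\H(f,g)$, and dividing by $2$ gives the claimed bound.

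For the lower bound $\H^2(f,g)\le d_{TV}(F,G)$, I would use the simple pointwise inequality $|\sqrt{f}-\sqrt{g}|\le \sqrt{f}+\sqrt{g}$ (valid since $\sqrt{f},\sqrt{g}\ge 0$) to get
\[
(\sqrt{f}-\sqrt{g})^2 = |\sqrt{f}-\sqrt{g}|\cdot|\sqrt{f}-\sqrt{g}| \le |\sqrt{f}-\sqrt{g}|\cdot(\sqrt{f}+\sqrt{g}) = |f-g|.
\]
Integrating and dividing by $2$ yields $\H^2(f,g)\le \tfrac{1}{2}\int|f-g|\,dx = d_{TV}(F,G)$.

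The only care needed is on sets where either density vanishes, but there the identity $f-g=(\sqrt{f}-\sqrt{g})(\sqrt{f}+\sqrt{g})$ still holds pointwise, so no measure-theoretic subtlety arises. This completes the proposal.
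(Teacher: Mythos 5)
Your proof is correct and is the standard classical argument for the Le Cam inequalities relating total variation and Hellinger distance. The paper states this as a Fact without supplying a proof, so there is nothing to compare against; both inequalities are verified cleanly via the factorization $f-g=(\sqrt{f}-\sqrt{g})(\sqrt{f}+\sqrt{g})$, Cauchy--Schwarz, and the bound $\int\sqrt{fg}\le 1$.
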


\begin{fact}[Fact 15 of \cite{laha2021adaptive}]\label{fact: helli fknot}
 For a log-concave density $g_0$ with $\Io<\infty$, $\H(g_0(\mathord{\cdot}+y),g_0)=O(|y|)$.
 \end{fact}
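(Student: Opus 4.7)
The plan is to establish a bound of the form $\mathcal H^2(g_0(\mathord{\cdot}+y),g_0)\leq \tfrac{y^2}{8}\mathcal I_{g_0}$, which immediately yields the claim since $\mathcal I_{g_0}<\infty$. The proof exploits quadratic-mean differentiability of the location family at shift $0$, whose standard proof has three conceptual steps.

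First, I would reduce the Hellinger distance to an integral of the square-root difference and exploit absolute continuity of $g_0$. Recall from \eqref{eq: finitieness of FI} that finiteness of $\mathcal I_{g_0}$ is equivalent to $g_0$ being absolutely continuous with a weak derivative $g_0'$ such that $\int (g_0')^2/g_0<\infty$. Because $g_0$ is log-concave, $\mathrm{supp}(g_0)$ is an interval and $g_0>0$ on its interior, so $\sqrt{g_0}$ is absolutely continuous there with pointwise derivative $g_0'/(2\sqrt{g_0})$, and we extend this derivative by $0$ on $\{g_0=0\}$ (a Lebesgue-null subset of $\mathrm{int}(\mathrm{supp}(g_0))^c$, where all integrands of interest vanish). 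Hence, assuming without loss of generality $y>0$ (the case $y<0$ is symmetric), the fundamental theorem of calculus gives
\[
\sqrt{g_0(x+y)}-\sqrt{g_0(x)} \;=\; \int_0^y \frac{g_0'(x+s)}{2\sqrt{g_0(x+s)}}\,ds.
\]

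Second, I would apply the Cauchy--Schwarz inequality to the $s$-integral to obtain the pointwise bound
\[
\bigl(\sqrt{g_0(x+y)}-\sqrt{g_0(x)}\bigr)^2 \;\leq\; y\int_0^y \frac{g_0'(x+s)^2}{4\,g_0(x+s)}\,ds.
\]
Integrating in $x$, invoking Fubini's theorem (valid since the integrand is nonnegative), and using translation invariance of Lebesgue measure,
\[
\int\bigl(\sqrt{g_0(x+y)}-\sqrt{g_0(x)}\bigr)^2 dx \;\leq\; y\int_0^y \!\!\int \frac{g_0'(x+s)^2}{4\,g_0(x+s)}\,dx\,ds \;=\; \frac{y^2}{4}\,\mathcal I_{g_0}.
\]
Combining with $\mathcal H^2(f_1,f_2)=\tfrac12\int(\sqrt{f_1}-\sqrt{f_2})^2 dx$ yields $\mathcal H(g_0(\mathord{\cdot}+y),g_0)\leq |y|\sqrt{\mathcal I_{g_0}/8}$, i.e., $O(|y|)$.

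The only real obstacle is the careful handling of the boundary of $\mathrm{supp}(g_0)$, where $\sqrt{g_0}$ may fail to be classically differentiable. Log-concavity makes this routine: $g_0$ is continuous on the interior of its support, which is an interval, so $\sqrt{g_0}$ is absolutely continuous there and the chain rule for weak derivatives supplies the expected formula; the complement contributes nothing to either integral. Everything else is a bookkeeping exercise in Cauchy--Schwarz and Fubini.
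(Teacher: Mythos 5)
Your argument is correct, and it proves the statement in a self-contained way; the paper itself simply cites Fact~15 of Laha~(2021) without supplying a proof, so there is no in-paper argument to compare against. What you wrote is the textbook quadratic-mean-differentiability computation for a location family: represent $\sqrt{g_0(x+y)}-\sqrt{g_0(x)}$ by the fundamental theorem of calculus, apply Cauchy--Schwarz to the inner $s$-integral, and then Fubini and translation invariance to collapse everything to $\tfrac{y^2}{8}\mathcal I_{g_0}$, which is in fact the asymptotically sharp constant. One remark on attribution of the ``obstacle'' you correctly identify at the boundary of $\mathrm{supp}(g_0)$: the work there is done not by log-concavity but by finiteness of $\mathcal I_{g_0}$ itself, via the equivalence in Theorem~3 of Huber already invoked in the paper around display~\eqref{eq: finitieness of FI}. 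That equivalence forces $g_0$ to be absolutely continuous on all of $\RR$ (hence continuous, so $g_0$ vanishes continuously at any boundary of the support), and since $g_0'/(2\sqrt{g_0})\in L^2(\RR)\subset L^1_{\mathrm{loc}}$, a limiting argument extends the FTC identity for $\sqrt{g_0}$ across the boundary; log-concavity only serves the cosmetic role of making $\mathrm{supp}(g_0)$ an interval. Also, the parenthetical ``a Lebesgue-null subset of $\mathrm{int}(\mathrm{supp}(g_0))^c$'' reads oddly: $\{g_0=0\}$ is not Lebesgue-null in general; what you actually need (and use) is that the integrand $g_0'/(2\sqrt{g_0})$ vanishes identically off $\{g_0>0\}$, so that set simply contributes zero to the relevant integrals. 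Neither point affects correctness, but tightening them would make the proof airtight.
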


The next facts pertain to the convergence of random variables. 
\begin{fact}[Theorem 5.7 (ii) of \cite{shorack2000}]\label{fact: convergence in probability to convergence almost surely}
Consider a random sequence $\{X_n\}_{n\geq 1}$ and a random variable $X$. If $X_n\to_p X$, then there exists a  subsequence $n_k$ for which $X_{n_k}\as X$. 
\end{fact}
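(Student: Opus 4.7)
The plan is to construct the subsequence explicitly by extracting indices that make the probability of deviation summable, and then invoke the first Borel--Cantelli lemma to upgrade convergence in probability to almost sure convergence along that subsequence.

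First, I would unwind the definition of convergence in probability: for each fixed $\epsilon > 0$, $\PP(|X_n - X| > \epsilon) \to 0$ as $n \to \infty$. In particular, for each positive integer $k$, the sequence $\PP(|X_n - X| > 1/k)$ tends to zero, so I can pick an index $n_k$ large enough that
\[
\PP(|X_{n_k} - X| > 1/k) < 2^{-k}.
\]
I would choose the $n_k$ recursively, insisting that $n_k > n_{k-1}$, so that $\{n_k\}$ is genuinely a subsequence.

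Next, I would set $A_k = \{|X_{n_k} - X| > 1/k\}$. By construction $\sum_{k \geq 1} \PP(A_k) < \sum_{k \geq 1} 2^{-k} = 1 < \infty$, so the first Borel--Cantelli lemma gives $\PP(A_k \text{ i.o.}) = 0$. Equivalently, with probability one there exists a (random) $K$ such that $|X_{n_k} - X| \leq 1/k$ for all $k \geq K$. Since $1/k \to 0$, this immediately yields $X_{n_k} \to X$ almost surely.

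There is no real obstacle here; the only things to be careful about are ensuring the chosen indices are strictly increasing (so we truly have a subsequence) and making sure the bounds $2^{-k}$ are summable so that Borel--Cantelli applies. The argument is standard and short; I would present it as a two-step proof, first constructing the subsequence, then applying Borel--Cantelli.
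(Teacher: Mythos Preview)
Your argument is correct and is exactly the standard proof of this classical fact: choose $n_k$ so that $\PP(|X_{n_k}-X|>1/k)<2^{-k}$, then apply the first Borel--Cantelli lemma. There is nothing to compare here, since the paper does not supply its own proof; it simply states the result as a fact and cites Theorem 5.7(ii) of Shorack (2000).
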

\begin{fact}[Theorem 5.7 (vii) of \cite{shorack2000}]
\label{fact: Shorack}
Consider a sequence of random variables $\{X_n\}_{n\geq 1}$ and a random variable $X$. Then $X_n\to_p X$ if and only if  for every subsequence $\{n_k\}_{k\geq 1}$, there is a further subsequence $\{n_{r}\}_{r\geq 1}$ such that $X_{n_r}\as X$.
\end{fact}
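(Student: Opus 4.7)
The plan is to prove both directions separately, leveraging Fact \ref{fact: convergence in probability to convergence almost surely} for the forward direction and a proof-by-contradiction argument for the reverse direction.

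For the forward direction, I would assume $X_n \to_p X$ and let $\{n_k\}_{k \geq 1}$ be an arbitrary subsequence. The key observation is that convergence in probability is inherited by subsequences: for any $\epsilon > 0$, since $\PP(|X_n - X| > \epsilon) \to 0$, the subsequence $\PP(|X_{n_k} - X| > \epsilon)$ also tends to zero, so $X_{n_k} \to_p X$. Then applying Fact \ref{fact: convergence in probability to convergence almost surely} to the sequence $\{X_{n_k}\}_{k \geq 1}$ directly extracts a further subsequence $\{n_r\}_{r \geq 1} \subset \{n_k\}$ along which $X_{n_r} \as X$.

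For the reverse direction, I would argue by contradiction. Suppose that for every subsequence of $\{X_n\}$ there is a further subsequence converging almost surely to $X$, but $X_n \not\to_p X$. Then there exist $\epsilon > 0$, $\delta > 0$, and a subsequence $\{n_k\}$ such that $\PP(|X_{n_k} - X| > \epsilon) \geq \delta$ for every $k$. By hypothesis, applied to this subsequence, there exists a further subsequence $\{n_r\} \subset \{n_k\}$ with $X_{n_r} \as X$. Since almost sure convergence implies convergence in probability (a standard consequence of the dominated convergence theorem applied to the indicators $\mathbf 1_{[|X_{n_r} - X| > \epsilon]}$), we obtain $\PP(|X_{n_r} - X| > \epsilon) \to 0$. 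This contradicts the lower bound $\PP(|X_{n_r} - X| > \epsilon) \geq \delta$ inherited from the parent subsequence, completing the proof.

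Neither direction should present any real obstacle. The only subtlety is noting that the almost-sure-implies-in-probability implication is indispensable in closing the contradiction in the reverse direction, and that convergence in probability passes trivially to subsequences in the forward direction. Both facts are standard and follow directly from the definitions, so the argument is essentially a bookkeeping exercise around the contrapositive.
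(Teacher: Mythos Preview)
Your argument is correct and is the standard proof of this classical subsequence characterization of convergence in probability. Note, however, that the paper does not actually supply a proof of this fact: it is stated as Fact~\ref{fact: Shorack} with a citation to Theorem~5.7~(vii) of \cite{shorack2000}, so there is no in-paper proof to compare against.
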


\begin{fact}\label{fact: consistency of the quantiles}
 Let $(F_n)_{n\geq 1}$ be a sequence of distribution functions and $F$ be distribution function such that $\|F_n-F\|_{\infty}\to 0$. Let $F$ have a density, denoted by $f$. Then for $t\in\iint(\supp(f))$, we have $|F_n^{-1}(t)-F^{-1}(t)|\to 0$.
 \end{fact}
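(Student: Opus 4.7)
The plan is to prove this by a standard sandwich argument exploiting that $F$ is strictly increasing in a neighborhood of $x_0 := F^{-1}(t)$ whenever $x_0$ lies in the interior of the support of the density $f$. Fix such a $t$ and let $\epsilon>0$ be small enough that $[x_0-\epsilon,x_0+\epsilon]\subset \iint(\supp(f))$. On this interval $f>0$, so $F$ is strictly increasing, and consequently $F(x_0-\epsilon)<t<F(x_0+\epsilon)$. The continuity of $F$ (guaranteed by the existence of the density) together with Fact \ref{fact: inverse common}(iii) ensures $F^{-1}(F(x_0))=x_0$, i.e.\ $F(x_0)=t$, so the strict inequalities above are genuine.

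Next I would exploit the uniform gap. Set
\[
\delta \;=\; \tfrac{1}{2}\min\bigl\{\,t-F(x_0-\epsilon),\; F(x_0+\epsilon)-t\,\bigr\}\;>\;0.
\]
By hypothesis $\|F_n-F\|_\infty\to 0$, so for all sufficiently large $n$ we have $\|F_n-F\|_\infty<\delta$, and hence
\[
F_n(x_0-\epsilon) \;<\; F(x_0-\epsilon)+\delta \;<\; t,
\qquad
F_n(x_0+\epsilon) \;>\; F(x_0+\epsilon)-\delta \;>\; t.
\]

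Finally I would translate these two inequalities into bounds on $F_n^{-1}(t)$ using the definition $F_n^{-1}(t)=\inf\{x:F_n(x)\ge t\}$. The left inequality, combined with the monotonicity of $F_n$, implies $F_n(x)<t$ for every $x\le x_0-\epsilon$, hence $F_n^{-1}(t)\ge x_0-\epsilon$. The right inequality shows $x_0+\epsilon\in\{x:F_n(x)\ge t\}$, giving $F_n^{-1}(t)\le x_0+\epsilon$. Together these yield $|F_n^{-1}(t)-F^{-1}(t)|\le\epsilon$ for all large $n$, completing the proof.

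There is no real obstacle here; the only delicate point is making sure that $x_0\in\iint(\supp(f))$ indeed translates into $F$ being strictly increasing on a neighborhood of $x_0$ (so that the strict inequalities $F(x_0\pm\epsilon)\gtrless t$ are available), and that the generalized inverse $F_n^{-1}$ is handled with the correct direction of inequalities. Both points are standard and handled above by appealing to Fact \ref{fact: inverse common}.
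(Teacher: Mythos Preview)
Your argument is correct. The paper itself does not supply an argument here but simply cites Lemma~A.5 of \cite{bobkovbig}; your sandwich proof is exactly the standard elementary argument underlying that cited result, so the two are consistent and your version is merely more self-contained. One small remark: the hypothesis as literally written, ``$t\in\iint(\supp(f))$'', is a slight imprecision; you have (correctly) read it as the requirement that $x_0=F^{-1}(t)\in\iint(\supp(f))$, which is also how the fact is invoked elsewhere in the paper (e.g.\ in the proofs of Lemmas~\ref{lemma: xi: xi goes to 1} and~\ref{lemma: bound: xi n}, where the argument of the quantile function lies in $(0,1)$).
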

 
 \begin{proof}
The proof follows from Lemma A.5 of \cite{bobkovbig}.
 \end{proof}

  

  The next fact is an empirical process result.
  \begin{fact}
 \label{fact: empirical process of m-p's}
 Assume that $\mathcal F$ is a class of measurable functions $h$ with $\int h^2dP_0<\epsilon^2$ where $\|h\|_\infty\leq M$ for some positive constant $M$. Then 
 \[\E\|\mathbb{G}_n\|_{\mathcal F}\lesssim J_{[\  ]}(\epsilon,\mathcal F, L_2(P_0))\lb 1+\frac{M J_{[\  ]}(\epsilon,\mathcal F, L_2(P_0))}{\epsilon^2\sqn}\rb,\]
 where
 \[J_{[\  ]}(\epsilon,\mathcal F, L_2(P_0))=\dint_0^\epsilon\sqrt{ 1+\log N_{[\  ]}(\epsilon',\mathcal F,L_2(P_0))}d\epsilon'.\]
 \end{fact}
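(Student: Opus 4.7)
My plan is to prove the stated maximal inequality by the classical bracketing-chaining argument with Bernstein's inequality applied at each scale, in the spirit of Lemma~3.4.2 of van~der~Vaart and Wellner. The two terms on the right-hand side will emerge, respectively, as a Dudley-type bracketing entropy integral (the ``Gaussian'' contribution) and a quadratic Bernstein correction that tracks the envelope $M$ at small variance.

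First I set up the chain at dyadic scales. For each integer $j \geq 0$, let $\epsilon_j = 2^{-j}\epsilon$ and pick a minimal $\epsilon_j$-bracketing $\{[l_{j,i}, u_{j,i}]\}_{i=1}^{N_j}$ of $\mathcal{F}$ in $L_2(P_0)$ with $N_j := N_{[\,]}(\epsilon_j, \mathcal{F}, L_2(P_0))$. For each $h \in \mathcal{F}$, let $\pi_j h$ be a measurable selection of the lower bracket endpoint containing $h$, truncated pointwise to $[-M, M]$ (legitimate since $\|h\|_\infty \leq M$), and let $\Delta_j h$ be the corresponding bracket width. Then $|h - \pi_j h| \leq \Delta_j h$ and $\|\Delta_j h\|_{P_0,2} \leq \epsilon_j$. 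For a terminal level $J$ to be chosen, I use the telescoping identity
\[
h = \pi_0 h + \sum_{j=1}^{J}(\pi_j h - \pi_{j-1} h) + (h - \pi_J h).
\]

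Next I control each increment. The function $\pi_j h - \pi_{j-1} h$ is bounded by $2M$ in sup-norm and, after triangle inequality, by a constant multiple of $\epsilon_{j-1}$ in $L_2(P_0)$; as $h$ varies, at most $N_j N_{j-1} \leq N_j^2$ such functions arise. Bernstein's inequality combined with a union bound gives, for every $s>0$,
\[
\Pr\!\Bigl(\max_h |\mathbb{G}_n(\pi_j h - \pi_{j-1} h)| > s\Bigr) \leq 2 N_j^2 \exp\!\left(-\frac{c\, s^2}{\epsilon_{j-1}^2 + M s/\sqrt{n}}\right),
\]
and integrating this tail yields
\[
\E \max_h |\mathbb{G}_n(\pi_j h - \pi_{j-1} h)| \lesssim \epsilon_{j-1}\sqrt{\log N_j} + \frac{M}{\sqrt{n}}\log N_j.
\]
The residual $\mathbb{G}_n(h - \pi_J h)$ is dominated by $\mathbb{G}_n \Delta_J h + \sqrt{n}\, P_0 \Delta_J h$, and both pieces are handled by a single-scale Bernstein step applied to the $N_J$ bracket widths, provided $J$ is taken large enough that $\sqrt{n}\,\epsilon_J$ is absorbed.

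Finally I sum over $j$. Monotonicity of $u \mapsto \log N_{[\,]}(u)$ combined with $\epsilon_{j-1} - \epsilon_j = \epsilon_j$ implies
\[
\sum_{j \geq 1}\epsilon_{j-1}\sqrt{\log N_j} \lesssim \int_0^\epsilon \sqrt{1 + \log N_{[\,]}(u, \mathcal{F}, L_2(P_0))}\, du = J_{[\,]}(\epsilon, \mathcal{F}, L_2(P_0)),
\]
which delivers the first factor of the claimed bound. For the Bernstein piece, the elementary estimate $\epsilon_j \sqrt{\log N_j} \leq J_{[\,]}(\epsilon)$ (since the integrand in $J_{[\,]}$ dominates $\sqrt{\log N_j}$ on $[0,\epsilon_j]$) gives $\log N_j \leq J_{[\,]}(\epsilon)^2/\epsilon_j^2$; when $J$ is balanced so that the residual matches the chaining scale, the geometric sum $\sum_j \epsilon_j^{-2}$ is dominated by its last term, and the total Bernstein contribution collapses to $M J_{[\,]}(\epsilon)^2/(\epsilon^2 \sqrt{n})$. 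Adding the two contributions produces the stated inequality. The main obstacle I expect is the bookkeeping for the second sum: one must choose the truncation level $J$ so that the residual is negligible while the divergent geometric series $\sum_j \epsilon_j^{-2}$ rearranges exactly into $J_{[\,]}^2/\epsilon^2$. The remaining ingredients---measurable bracket selection, envelope truncation of $\pi_j h$, and Bernstein's tail inequality for bounded variables with small variance---are entirely standard.
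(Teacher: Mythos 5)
You are not really competing with a proof here: the paper's entire argument for this fact is a one-line citation, since the statement \emph{is} Lemma~3.4.2 of van der Vaart and Wellner. Your attempt to reprove it by dyadic bracketing plus Bernstein-with-union-bound at each scale is a reasonable instinct, but it breaks down exactly at the step you flag as ``bookkeeping,'' and the breakdown is not reparable within your scheme. With a plain union bound and Bernstein's inequality at level $j$, the sub-exponential contribution is $\sum_{j\le J}(M/\sqrt{n})\log N_j$. Your estimate $\log N_j\le J_{[\ ]}(\epsilon)^2/\epsilon_j^2$ turns this into a geometric sum dominated by its \emph{finest} scale, i.e. of order $(M/\sqrt{n})\,J_{[\ ]}(\epsilon)^2\,4^{J}/\epsilon^2$, not $(M/\sqrt{n})\,J_{[\ ]}(\epsilon)^2/\epsilon^2$ as you assert (``collapses to $MJ_{[\ ]}^2/(\epsilon^2\sqrt n)$'' silently replaces $\epsilon_J$ by $\epsilon$). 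Meanwhile your residual term contains the nonrandom piece $\sqrt{n}\,P_0\Delta_J h\le\sqrt{n}\,\epsilon_J$, which must be $\lesssim J_{[\ ]}(\epsilon)$; in the generic case $J_{[\ ]}(\epsilon)\asymp\epsilon$ this forces $2^{J}\gtrsim\sqrt{n}$, and then the Bernstein sum is of order $M\sqrt{n}$, larger than the claimed correction $MJ_{[\ ]}(\epsilon)^2/(\epsilon^2\sqrt{n})$ by a factor of order $n$. Small $J$ ruins the residual, large $J$ ruins the Bernstein sum; no ``balancing'' choice exists.

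The missing idea is the adaptive truncation at the heart of the real proofs (Ossiander's argument; equivalently the Bernstein-norm route of Lemma~3.4.3 in van der Vaart and Wellner). There one truncates the bracket widths/links at level $j$ at height $\sqrt{n}\,a_j$ with $a_j\asymp\epsilon_j/\sqrt{1+\log N_{j+1}}$, so that Bernstein's inequality contributes $a_j\log N_{j+1}\asymp\epsilon_j\sqrt{\log N_{j+1}}$ at each scale --- the same order as the sub-Gaussian term, hence summable into $J_{[\ ]}(\epsilon)$ --- while the discarded unbounded parts are controlled by $L_1$--$L_2$ comparisons that telescope across scales. The global envelope $M$ then enters essentially once, at the coarsest resolution, through $1+\log N_{[\ ]}(\epsilon,\mathcal F,L_2(P_0))\le J_{[\ ]}(\epsilon)^2/\epsilon^2$, which is precisely what produces the factor $MJ_{[\ ]}(\epsilon)^2/(\epsilon^2\sqrt{n})$ in the statement. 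So either reproduce that truncation scheme (or rescale to the Bernstein ``norm'' and invoke the corresponding maximal inequality), or do what the paper does and simply cite Lemma~3.4.2 of van der Vaart and Wellner; as written, your sketch does not yield the stated bound.
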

 
 \begin{proof}
 The proof follows from Lemma 3.4.2 of \cite{vdv}.
 \end{proof}

 
 

 
 
 The following is a property of integrable functions.
 \begin{fact}[Exercise 16.18, pp. 223 of \cite{billingsley}]
 \label{fact: condition for integrability}
 Let $\PP$ be a finite measure on $\RR$. Suppose $\int_{\RR} |h| dP<\infty$ for a measurable function $h$. Then, for any $\epsilon>0$, there exists $\sigma>0$ such that $\int_{\mathcal B}|h|dP<\epsilon$ for any $\PP$-measurable set $\mathcal B$ with $\PP(\mathcal B)<\sigma$.
 \end{fact}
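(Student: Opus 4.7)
The plan is to prove this standard absolute-continuity-of-the-integral fact via a truncation argument. The key idea is that although $|h|$ may be unbounded, its integral can be well-approximated by the integral of a bounded truncation; once we have a bounded function, controlling its integral over small sets is immediate from the finiteness of the measure.

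First, I would introduce the truncated functions $h_n(x) = |h(x)| \wedge n$ for each positive integer $n$. These satisfy $0 \leq h_n \leq |h|$, and $h_n \uparrow |h|$ pointwise as $n \to \infty$. Since $|h|$ is $\PP$-integrable by hypothesis, the monotone convergence theorem (or equivalently, dominated convergence with dominating function $|h|$) yields
\begin{equation*}
\int_{\RR} (|h| - h_n) \, dP \;\longrightarrow\; 0 \quad \text{as } n\to\infty.
\end{equation*}
Therefore, given $\epsilon>0$, I can fix an integer $n_0$ large enough that $\int_{\RR}(|h| - h_{n_0}) \, dP < \epsilon/2$.

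Next, I would set $\sigma = \epsilon/(2 n_0)$ and take any $\PP$-measurable set $\mathcal{B}$ with $\PP(\mathcal{B}) < \sigma$. Splitting the integral and using $0 \leq h_{n_0} \leq n_0$, I get
\begin{equation*}
\int_{\mathcal B} |h| \, dP \;=\; \int_{\mathcal B}(|h| - h_{n_0})\, dP + \int_{\mathcal B} h_{n_0} \, dP \;\leq\; \int_{\RR}(|h| - h_{n_0})\, dP + n_0 \PP(\mathcal B) \;<\; \frac{\epsilon}{2} + n_0 \cdot \frac{\epsilon}{2 n_0} \;=\; \epsilon,
\end{equation*}
which is the desired conclusion. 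The argument uses finiteness of $\PP$ only implicitly through the well-definedness of $n_0 \PP(\mathcal B)$; in fact the proof goes through as long as the integral of $|h|$ is finite.

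There is no real obstacle here; the only conceptual step is recognizing that monotone (or dominated) convergence supplies the needed approximation of $|h|$ by bounded functions, after which bounding the integral on a small set is trivial. The choice of $\sigma$ is dictated by the $n_0$ obtained in the approximation step, making the two halves of the error split of equal size $\epsilon/2$.
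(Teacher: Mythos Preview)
Your proof is correct and is the standard truncation argument for absolute continuity of the integral. The paper does not actually prove this statement; it simply records it as a fact with a citation to Billingsley's textbook exercise, so there is no paper proof to compare against.
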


Finally, we list some useful facts regarding log-concave densities.

 
 

  \begin{fact}[Fact 3 of \cite{laha2021adaptive}]\label{fact: f grtr than F: base}
 Let $\hn$ be a log-concave density that satisfies Condition~\ref{condition: hn basic}.   Then there exists a constant $\omega_0>0$ depending only on $g_0$ and a random sequence $\omega_n\geq 0$ with $\omega_n\to_p \omega_0>0$ such that, 
 \[\hn(x)\geq \omega_n\min(\tilde{G}_n(x), 1-\tilde{G}_n(x)), \]
 for any $x\in\RR$.
 \end{fact}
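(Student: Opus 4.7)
The plan is to reduce the inequality to a deterministic one-dimensional ``isoperimetric'' bound for log-concave densities, and then verify that the resulting constant converges in probability using the consistency properties in Condition~\ref{condition: hn basic}.

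For the deterministic ingredient, fix any log-concave density $f$ on $\RR$ with distribution function $F$ and median $m = F^{-1}(1/2)$, and define $G(u) = f(F^{-1}(u))$ on $(0,1)$. Because $f > 0$ on $\iint(\text{supp}(f))$, the quantile function is strictly increasing and differentiable with $(F^{-1})'(u) = 1/G(u)$, so by the chain rule $G'(u) = (\log f)'(F^{-1}(u))$. Log-concavity of $f$ makes $(\log f)'$ non-increasing, and $F^{-1}$ is non-decreasing, so $G$ is concave on $(0,1)$. Combined with $G \geq 0$, concavity yields
\[
G(u) \geq 2 G(1/2) \min(u, 1-u), \quad u \in (0,1),
\]
obtained by comparing $G(u)$ for $u \in (0,1/2)$ with the chord joining $(\epsilon, G(\epsilon))$ and $(1/2, G(1/2))$ and letting $\epsilon \downarrow 0$ (using $G(\epsilon) \geq 0$), and symmetrically for $u \in (1/2,1)$. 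Substituting $u = F(x)$ and noting $G(1/2) = f(m)$ gives the deterministic bound
\[
f(x) \geq 2 f(m)\, \min(F(x),\, 1-F(x)), \quad x \in \RR,
\]
which is trivially satisfied at points where $F(x) \in \{0,1\}$.

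Applied to the (random) log-concave density $\hn$ with median $m_n := \tilde G_n^{-1}(1/2)$, this reads $\hn(x) \geq 2 \hn(m_n) \min(\tilde G_n(x), 1-\tilde G_n(x))$, so I would set $\omega_n = 2 \hn(m_n)$ and $\omega_0 = 2 g_0(m_0)$ with $m_0 = G_0^{-1}(1/2)$. Since $g_0$ is continuous and strictly positive at $m_0 \in \iint(\text{supp}(g_0))$, one has $\omega_0 > 0$. For $\omega_n \to_p \omega_0$: Condition~\ref{condition: hn basic}(A) gives $\|\hn - g_0\|_1 \to_p 0$, hence $\|\tilde G_n - G_0\|_\infty \to_p 0$; Fact~\ref{fact: consistency of the quantiles}, applied at $1/2$ where $g_0(m_0) > 0$, then yields $m_n \to_p m_0$. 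Setting $\zeta_{m,n} = m_n - m_0 = o_p(1)$ and invoking Condition~\ref{condition: hn basic}(B) with the compact set $K = \{m_0\}$ produces $\hln(m_n) = \hln(m_0 + \zeta_{m,n}) \to_p \psi_0(m_0)$, and continuous mapping gives $\hn(m_n) \to_p g_0(m_0)$, i.e.\ $\omega_n \to_p \omega_0$.

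The main subtlety I anticipate is the concavity-plus-boundary step: unlike densities on the full real line, log-concave densities on a bounded interval (e.g.\ the uniform) do not force $G(0+) = G(1-) = 0$, so the bound $G(u) \geq 2 G(1/2) \min(u, 1-u)$ must be extracted from concavity on the \emph{open} interval together with non-negativity of $G$, via the limiting convex-combination argument sketched above, rather than from concavity on $[0,1]$ with boundary values zero.
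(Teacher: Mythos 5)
Your proof is correct. The paper does not actually prove this statement — it imports it verbatim as Fact 3 of \cite{laha2021adaptive} — and your argument is precisely the standard one underlying that citation: concavity of the Bobkov function $u\mapsto f(F^{-1}(u))$ for a log-concave $f$, which together with non-negativity yields the deterministic bound $f(x)\geq 2f(m)\min(F(x),1-F(x))$, followed by consistency of $\hn$ at its median via Condition~\ref{condition: hn basic}. The only difference from the cited one-sample setting is that there the density is symmetric, so the centering point is the known center of symmetry rather than the estimated median; your use of $m_n=\tilde G_n^{-1}(1/2)$ and Fact~\ref{fact: consistency of the quantiles} is the right generalization, and your handling of the boundary case $F(x)\in\{0,1\}$ and of concavity on the open interval is careful and correct.
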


\end{document}